\newtheoremstyle{mytheoremstyle} % name
    {10 pt}                   % Space above
    {10 pt}                    % Space below
    {\slshape}                   % Body font
    {}                           % Indent amount
    {\bfseries}                   % Theorem head font
    {.}                          % Punctuation after theorem head
    {.5em}                       % Space after theorem head
    {}  % Theorem head spec (can be left empty, meaning ‘normal’)
\theoremstyle{mytheoremstyle}
\newtheorem{theo}{Theorem}[section]
\newtheorem{prop}[theo]{Proposition}
\newtheorem{lem}[theo]{Lemma}
\newtheorem{cor}[theo]{Corollary}
\newtheorem{theoalph}{Theorem}
\newtheorem{proalph}[theoalph]{Proposition}
\theoremstyle{definition}
\newtheorem{rem}[theo]{Remark}
\def\C{\mathbb{C}}
\def\a{\alpha}
\def\Z{\mathbb{Z}}
\def\N{\mathbb{N}}
\def\restriction#1#2{\mathchoice
              {\setbox1\hbox{${\displaystyle #1}_{\scriptstyle #2}$}
              \restrictionaux{#1}{#2}}
              {\setbox1\hbox{${\textstyle #1}_{\scriptstyle #2}$}
              \restrictionaux{#1}{#2}}
              {\setbox1\hbox{${\scriptstyle #1}_{\scriptscriptstyle #2}$}
              \restrictionaux{#1}{#2}}
              {\setbox1\hbox{${\scriptscriptstyle #1}_{\scriptscriptstyle #2}$}
              \restrictionaux{#1}{#2}}}
\def\restrictionaux#1#2{{#1\,\smash{\vrule height .8\ht1 depth .85\dp1}}_{\,#2}}
\numberwithin{equation}{section}
\begin{document}

\selectlanguage{english}

\title[Dynamics of a family of polynomial automorphisms of $\mathbb{C}^3$, a phase transition]{Dynamics of a family of polynomial automorphisms of $\mathbb{C}^3$, \\ a phase transition}

\author{Julie D\'eserti}

\author{Martin Leguil}

\address{Universit\'e Paris Diderot, Sorbonne Paris Cit\'e, Institut de Math\'ematiques de
Jussieu-Paris Rive Gauche, UMR $7586$, CNRS, Sorbonne Universit\'es, UPMC Univ Paris $06$,
F-$75013$ Paris, France.}

\email{deserti@math.univ-paris-diderot.fr}

\email{martin.leguil@imj-prg.fr}

\begin{abstract}
The polynomial automorphisms of the affine plane have been studied a lot: 
if $f$ is such an automorphism, then either $f$
preserves a rational fibration, has an uncountable centralizer and its first dynamical
degree equals $1$, or $f$ preserves no rational curves, has a countable centralizer
and its first dynamical degree is $>1$. In higher dimensions there is no such description.
In this article we study a family $(\Psi_\alpha)_\alpha$ of polynomial automorphisms of 
$\mathbb{C}^3$. We show that the first dynamical degree of $\Psi_\alpha$ is $>1$, that 
$\Psi_\alpha$ preserves a unique rational fibration and has an uncountable centralizer. 
We then describe the dynamics of the family $(\Psi_\alpha)_\alpha$, in particular the 
speed of points escaping to infinity. We also observe different behaviors according 
to the value of the parameter $\alpha$.
\end{abstract}

\maketitle

\smallskip

\tableofcontents

\section{Introduction}\label{sec:intro}

H\'enon gave families of quadratic automorphisms of the plane 
which provide examples of dynamical systems with very complicated dynamics
(\cite{Henon1, Henon2, BenedicksCarleson, DevaneyNitecki}). In 
\cite{FriedlandMilnor} Friedland and Milnor proved that if $f$ belongs 
to the group $\mathrm{Aut}(\mathbb{C}^2)$ of polynomial automorphisms 
of $\mathbb{C}^2$, then $f$ is $\mathrm{Aut}(\mathbb{C}^2)$-conjugate 
either to an elementary automorphism (elementary in the sense that they
preserve a rational fibration), or to an automorphism of H\'enon 
type, \emph{i.e.\ }to
\[
g_1g_2\ldots g_k,\,\qquad\,g_j\colon(z_0,z_1)\mapsto(z_1,P_j(z_1)-\delta_j 
z_0),\,\delta_j\in\mathbb{C}^*,\, P_j\in\mathbb{C}[z_1],\,\deg P_j\geq 2.
\]
The topological entropy allows to measure chaotic behaviors. In 
dimension $1$ the topological entropy of a rational fraction
coincides with the logarithm of its degree; but the algebraic
degree of a polynomial automorphism of $\mathbb{C}^2$ is not 
invariant under conjugacy so \cite{RussakovskiiShiffman, Friedland} 
introduce the first dynamical degree. The topological entropy
is equal to the logarithm of the first dynamical degree 
(\cite{Smillie, BedfordSmillie}). If $f$ is of H\'enon type, then
the first dynamical degree of $f$ is equal to its algebraic 
degree $\geq 2$; on the other hand if $f$ is conjugate to an 
elementary automorphism, then its first dynamical degree is $1$
(\emph{see} \cite{FriedlandMilnor}). Another criterion to 
measure chaos is the size of the centralizer of an element. 
The group $\mathrm{Aut}(\mathbb{C}^2)$ has a structure of 
amalgamated product (\cite{Jung}) hence according to
\cite{BassSerre} this group acts non trivially on a tree. 
Using this action Lamy proved that a polynomial automorphism is
of H\'enon type if and only if its centralizer is countable 
(\cite{Lamy}). 

The group $\mathrm{Aut}(\mathbb{C}^3)$ and the dynamics of its 
elements are much less-known. In this article we study the 
properties of the family of polynomial automorphisms of 
$\mathbb{C}^3$ given by 
\[
\Psi_\a\colon (z_0,z_1,z_2) \mapsto (z_0+z_1+z_0^q z_2^d,z_0,\a z_2)
\]
where $\alpha$ denotes a nonzero complex number with modulus $\leq 1$, 
$q$ an integer $\geq 2$, and $d$ an integer $\geq 1$.

The automorphism $\Psi_\a$ can be seen as 
a skew product over the map $z_2 \mapsto \a z_2$, and whose dynamics 
in the fibers is given by automorphisms of Hénon type. More precisely, if $z_2 \in \C$, 
let us denote $\psi_{z_2}\colon(z_0,z_1) \mapsto (z_0+z_1+z_0^q z_2^d,z_0)$; 
then $\Psi_\a(z_0,z_1,z_2)=(\psi_{z_2}(z_0,z_1),\alpha z_2)$, and for every $n \geq 1$, we have
$\Psi_\a^n(z_0,z_1,z_2)=((\psi_{z_2})_n(z_0,z_1),\alpha^n z_2)$, where 
\begin{equation}
(\psi_{z_2})_n=\psi_{\a^{n-1}z_2} \circ \dots \circ \psi_{\a z_2} \circ \psi_{z_2}.
\end{equation}
If $\alpha \neq 0$, we also define the map $\phi_\a:=\alpha^l \psi_1$ where $l:=d/(q-1)$. We will see later on that $\Psi_\a$ is semi-conjugate to $\phi_\a$. 
The family of automorphisms $\{\Psi_\alpha\}_\a$ satisfies the following properties:

\begin{proalph}
Take $0< |\alpha|\leq 1$. Then 
\begin{itemize}
\smallskip
\item the first dynamical degree of the automorphism $\Psi_\alpha$ $($resp. $\Psi_\alpha^{-1})$ is $q\geq 2$;
\smallskip
\item the centralizer of $\Psi_\alpha$ is uncountable;
\smallskip
\item if $0<|\alpha|\leq 1$, then $\Psi_\alpha$ preserves a unique rational fibration, $\{z_2=\text{cst}\}$.
\end{itemize}
\end{proalph}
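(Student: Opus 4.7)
The plan is to address the three items in turn, exploiting the skew-product structure $\Psi_\a(z_0,z_1,z_2) = (\psi_{z_2}(z_0,z_1), \a z_2)$ together with the semi-conjugation announced in the text between $\Psi_\a$ and the H\'enon-type map $\phi_\a = \a^l\psi_1$ on $\C^2$, which arises from the fiberwise rescaling $(u,v) := (z_2^l z_0, z_2^l z_1)$ (using $(q-1)l=d$ to absorb the $z_2^d$-factor) turning $\psi_{z_2}$ into $\phi_\a$.

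\textbf{First dynamical degree.} The lower bound $\lambda_1(\Psi_\a)\geq q$ should follow from the semi-conjugation: since $\phi_\a$ is of H\'enon type with algebraic degree $q$, its first dynamical degree equals $q$ by Friedland--Milnor, and a dominant semi-conjugation transfers the lower bound to $\Psi_\a$. For the upper bound I would compute $\deg \Psi_\a^n$ on $\P^3$ directly, using the composition formula displayed in the excerpt to set up a recursion on the degrees of the homogeneous components of the iterates, and check that $\deg \Psi_\a^n = O(q^n)$. An analogous computation on the explicit inverse $\Psi_\a^{-1}(z_0',z_1',z_2') = (z_1', z_0'-z_1'-(z_1')^q(z_2'/\a)^d, z_2'/\a)$ yields $\lambda_1(\Psi_\a^{-1})=q$.

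\textbf{Uncountable centralizer.} I would exhibit an explicit one-parameter family of diagonal linear commutants. Setting $F_{\lambda,\mu}(z_0,z_1,z_2):=(\lambda z_0,\lambda z_1,\mu z_2)$ for $\lambda,\mu\in\C^*$, a direct calculation shows
\[
\Psi_\a\circ F_{\lambda,\mu} = F_{\lambda,\mu}\circ\Psi_\a \iff \lambda^{q-1}\mu^d = 1.
\]
The solution locus is an algebraic curve in $(\C^*)^2$, hence provides uncountably many elements of $\mathrm{Cent}(\Psi_\a)$.

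\textbf{Uniqueness of the invariant fibration.} The horizontal fibration $\{z_2=\mathrm{cst}\}$ is invariant since $\Psi_\a$ sends $\{z_2=c\}$ to $\{z_2=\a c\}$. For uniqueness, suppose a second invariant rational fibration is given by a non-constant rational $F\colon\C^3\dashrightarrow\P^1$ with $F\circ\Psi_\a = g\circ F$ for some M\"obius $g$. Restricting $F$ to a generic horizontal plane $\{z_2=c\}$ yields a rational map $F_c\colon\C^2\dashrightarrow\P^1$ that should be forced to be constant by the following case analysis. When $\a$ is a primitive $N$-th root of unity, $\Psi_\a^N$ preserves each horizontal plane and restricts there to the composition $\psi_{\a^{N-1}c}\circ\dots\circ\psi_c$, a H\'enon-type automorphism of $\C^2$; such an automorphism admits no non-trivial invariant rational fibration, since its centralizer in $\mathrm{Aut}(\C^2)$ is countable by Lamy. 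When $|\a|<1$, the accumulation $\a^n c\to 0$ and a limit argument on $F\circ\Psi_\a^n$ near $\{z_2=0\}$ force $F$ to depend only on $z_2$. When $|\a|=1$ and $\a$ is not a root of unity, one combines the density of the orbit $\{\a^n c\}$ on $\{|z_2|=|c|\}$ with a rigidity argument on the family $\{F_{\a^n c}\}_n$ to reach the same conclusion. This last case, particularly the Liouville-type regime, is where the bulk of the technical work should lie and constitutes the main obstacle of the proof.
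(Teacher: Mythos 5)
Your arguments for the first two items are sound. For the dynamical degree, the paper computes $\deg\Psi_\alpha^n=q^n+d\,\frac{q^n-1}{q-1}$ by a direct induction, which immediately gives $q^n\leq\deg\Psi_\alpha^n\leq(d+1)q^n$ and hence $\lambda(\Psi_\alpha)=q$; your combination of a semi-conjugation lower bound with a direct degree estimate reaches the same conclusion. Your centralizer argument is in fact shorter than the paper's: the verification that $F_{\lambda,\mu}$ commutes with $\Psi_\alpha$ precisely when $\lambda^{q-1}\mu^d=1$ is correct and produces an uncountable family directly, whereas the paper first computes $\mathrm{Cent}(\Phi_\alpha,\mathrm{Aut}(\C^3))$ in full and then transfers by the birational conjugacy $\theta$.

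The gap is in the fibration item, as you partially acknowledge. The paper sidesteps your case analysis by conjugating birationally to $\Phi_\alpha=(\phi_\alpha(z_0,z_1),\alpha z_2)$, whose fiber dynamics is \emph{autonomous}: the restriction of $\Phi_\alpha$ to the invariant plane $\{z_2=0\}$ is the H\'enon automorphism $\phi_\alpha$ itself, which preserves no rational curve (Brunella). Hence any $\Phi_\alpha$-invariant rational fibration restricts to a constant on $\{z_2=0\}$, and one concludes that it factors through $z_2$; an invariant fibration of $\Psi_\alpha$ is then a function of $z_2$ by transporting through $\theta$. This route is unavailable for $\Psi_\alpha$ directly: its restriction to $\{z_2=0\}$ is the linear map $(z_0,z_1)\mapsto(z_0+z_1,z_0)$, which \emph{does} preserve rational pencils of lines such as $\{z_0-\varphi z_1=c\}$, so Brunella gives no information there. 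That is why your direct approach runs into genuine difficulty in the irrational cases $|\alpha|<1$ and $|\alpha|=1$, and those cases remain unproved in your proposal. Finally, even in your root-of-unity case the relevant citation should be Brunella's theorem (H\'enon-type automorphisms preserve no rational curve) rather than Lamy's countable-centralizer theorem: the latter is a parallel characterization of H\'enon type, not a statement about invariant fibrations, so by itself it does not give the implication you want.
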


We then focus on the dynamics of $\Psi_\a$, $0< |\alpha|\leq 1$. Let us introduce 
a definition. We denote by $\varphi:=\frac{1+\sqrt{5}}{2}$ the golden ratio. We say that the forward orbit of $p$ \textit{goes
 to infinity with Fibonacci speed} if the sequence 
 $(\Psi_\a^n(p)\varphi^{-n})_{n \geq 0}$ converges and 
 $\lim\limits_{n \to +\infty} \Psi_\a^n(p)\varphi^{-n} = p' \neq 0_{\C^3}$. 
 In particular this implies
 \[
 \|\Psi_\a^n(p)\| \sim \|p'\| \varphi^n.
 \]
The hypersurface $\{z_2=0\}$ is fixed by $\Psi_\a$, 
and the induced map on it is a linear Anosov diffeomorphism. We see that for any 
$p \in \{z_2=0\}$, either its forward orbit goes to $0_{\C^3}$ exponentially fast, or 
it escapes to infinity with Fibonacci speed.
Concerning points escaping to infinity with maximal speed, we prove:
\begin{theoalph}
Fix $0<|\alpha|\leq 1$. For any point $p \in \C^3$, the limit 
$\lim\limits_{n\to +\infty}\frac{\log^+\vert\vert\Psi_\alpha^n(p)\vert\vert}{q^n}$
exists. The function 
\[
G^+_{\Psi_\alpha}(p)=\lim_{n\to +\infty}\frac{\log^+\vert\vert\Psi_\alpha^n(p)\vert\vert}{q^n}
\]
is plurisubharmonic, Hölder continuous, and satisfies 
$G_{\Psi_\a}^+\circ \Psi_\a = q \cdot G_{\Psi_\a}^+$. Set 
$\widetilde l := 2 \max\left(\frac{d}{q-1},1\right)$; then 
\[
1\leq \limsup\limits_{\|p\| \to +\infty} \frac{G_{\Psi_\a}^+(p)}{\log \|p\|}\leq \widetilde l.
\]

Moreover, the set $\mathcal{E}:=\big\{p \in \C^3\ \vert \ G_{\Psi_\a}^+(p) >0\big\}$ 
of points escaping to infinity with maximal speed is open, connected, 
and has infinite Lebesgue measure on any complex line where $G_{\Psi_\a}^+$ 
is not identically zero. 
In particular, the set 
$\big\{p \in \C^3\ \vert \ \lim\limits_{n \to +\infty}\|\Psi_\a^n(p)\| = +\infty\big\}$ is of 
infinite measure. 
We also exhibit an explicit open set $\Omega \subset \mathcal{E}$. 
\end{theoalph}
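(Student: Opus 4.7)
The plan is to adapt to the skew-product setting the approach developed by Bedford--Smillie and Hubbard for complex Hénon maps. The crucial observation is that since $|\a|\leq 1$, the third coordinate satisfies $|z_2^{(n)}|=|\a|^n|z_2|\leq|z_2|$ and so stays bounded along any orbit. Writing $\Psi_\a(z_0,z_1,z_2)=(z_0+z_1+z_0^q z_2^d,z_0,\a z_2)$, in any region where $|z_0|$ is large compared to the other coordinates the monomial $z_0^q z_2^d$ dominates, giving the key inequality
\[
\bigl|\log^+\|\Psi_\a(p)\|-q\log^+|z_0|\bigr|\leq d\log^+|z_2|+O(1),
\]
uniform on compacta. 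Iterating and using $|z_2^{(n)}|\leq|z_2|$ to bound the error along the orbit, one sees that $\sum_n q^{-n-1}\bigl(\log^+\|\Psi_\a^{n+1}(p)\|-q\log^+\|\Psi_\a^n(p)\|\bigr)$ is absolutely convergent; hence $q^{-n}\log^+\|\Psi_\a^n(p)\|$ is a Cauchy sequence and the limit $G^+_{\Psi_\a}(p)$ exists, with $G^+_{\Psi_\a}\equiv 0$ on the set of bounded orbits. The functional equation $G^+_{\Psi_\a}\circ\Psi_\a=q\cdot G^+_{\Psi_\a}$ is immediate from the definition.

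Each $u_n:=q^{-n}\log^+\|\Psi_\a^n\|$ is plurisubharmonic, being the composition of $\log^+\|\cdot\|$ with the polynomial map $\Psi_\a^n$, rescaled. The uniformity of the above estimate on compact sets upgrades Cauchy-ness to locally uniform convergence, so the limit $G^+_{\Psi_\a}$ is psh. Hölder continuity follows from the Bedford--Smillie--Hubbard argument: combining the chain-rule bound on $D\Psi_\a^n$ with the fact that, inside the escape region, the expansion rate of $\Psi_\a^n$ is essentially $|z_0|^{q^n}$ up to a polynomial correction from the slowly varying $|z_2|$-factor, one derives a uniform distortion estimate $|u_n(p)-u_n(p')|\leq K\|p-p'\|^\beta$ with $\beta>0$ independent of~$n$, which passes to the limit.

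For the growth bounds, iterating $|z_0^{(n+1)}|\leq 2|z_0^{(n)}|^q\|p\|^d$ (using $|z_2^{(n)}|\leq\|p\|$) and telescoping gives $|z_0^{(n)}|\leq C^{q^n}\|p\|^{(1+l)q^n}$ with $l=d/(q-1)$; taking logarithms yields $G^+_{\Psi_\a}(p)\leq(1+l)\log^+\|p\|+O(1)$, which is bounded by $\widetilde l\log^+\|p\|+O(1)$ since $1+l\leq 2\max(l,1)=\widetilde l$. The lower bound~$1$ comes from evaluating $G^+_{\Psi_\a}$ along $p_t=(t,0,1)$: a direct computation of the orbit gives $|z_0^{(n)}(p_t)|\sim|t|^{q^n}$ for $|t|$ large, hence $G^+_{\Psi_\a}(p_t)=\log|t|+o(\log|t|)\sim\log\|p_t\|$.

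For the escape set, openness of $\mathcal{E}$ follows from continuity of $G^+_{\Psi_\a}$. An explicit forward-invariant open subset of $\mathcal{E}$ is
\[
\Omega:=\bigl\{(z_0,z_1,z_2)\in\C^3:\, z_2\neq 0,\ |z_0|>\max(|z_1|,1),\ |z_0|^{q-1}|z_2|^d>R\bigr\}
\]
for $R$ large: one checks $\Psi_\a(\Omega)\subset\Omega$ because the super-exponential growth of $|z_0^{(n)}|$ overwhelms the geometric decay $|\a|^n|z_2|$ of the third coordinate, preserving the defining inequality. Connectedness of $\mathcal{E}$ then follows from $\mathcal{E}=\bigcup_{n\geq 0}\Psi_\a^{-n}(\Omega)$, verifying that every point of $\mathcal{E}$ eventually lands in $\Omega$ and using that each $\Psi_\a^{-n}(\Omega)$ is connected since $\Psi_\a^{-n}$ is a biholomorphism and $\Omega$ is connected. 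The infinite-measure statement on a line $L$ with $G^+_{\Psi_\a}|_L\not\equiv 0$ follows from the fact that $G^+_{\Psi_\a}|_L$ is a non-constant subharmonic function on $L\cong\C$ of at most logarithmic growth, so its positivity set is an unbounded planar open set of infinite Lebesgue measure. The main obstacle will be the Hölder continuity: the fibrewise varying Hénon-type dynamics coupled to the slowly evolving $z_2$-coordinate makes the distortion estimates more delicate than in the classical two-dimensional case, and one may wish to exploit the semi-conjugation $\Psi_\a\sim\phi_\a=\a^l\psi_1$ introduced above to reduce to a cleaner setting.
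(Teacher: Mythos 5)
Your existence-of-limit argument (Cauchy in $q^{-n}\log^+\|\Psi_\a^n(p)\|$ with an error controlled by $d\log^+|z_2|$, using $|\a|\le 1$) is essentially the paper's. Your growth bounds and infinite-measure argument are also sound. But two points are genuine gaps.

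First, H\"older continuity is not actually proved. You sketch a direct distortion estimate on $D\Psi_\a^n$ and then concede it is ``more delicate'' and that ``one may wish to exploit the semi-conjugation''; this is precisely the step the paper carries out and you omit. The paper's route is to establish the identity $G^+_{\Psi_\a}=G^+_{\phi_\a}\circ h$, where $h(z_0,z_1,z_2)=(z_0z_2^l,z_1z_2^l)$ and $\phi_\a$ is the genuine H\'enon automorphism $\a^l(z_0+z_1+z_0^q,z_0)$. This identity is not obvious: one must first check it on the open set $\{z_2\neq 0\}$ using $\theta\circ\Psi_\a^n=\Phi_\a^n\circ\theta$ with $\theta=(z_0z_2^l,z_1z_2^l,z_2)$, estimating the effect of the $(\a^np_2)^l$ factor, and then argue separately that both sides vanish on $\{z_2=0\}$. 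Once established, H\"older continuity (and psh) of $G^+_{\Psi_\a}$ are inherited from the known regularity of $G^+_{\phi_\a}$ composed with the holomorphic map $h$. Without this identity, the direct distortion argument in a skew-product with a fibrewise varying polynomial degree of nonlinearity is not a routine adaptation of Bedford--Smillie--Hubbard and would need to be written out; as it stands this is the main missing piece.

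Second, the connectedness argument $\mathcal{E}=\bigcup_{n\ge 0}\Psi_\a^{-n}(\Omega)$ is asserted but not justified. Even granting forward invariance $\Psi_\a(\Omega)\subset\Omega$ (so the union is nested and hence connected), you must prove that every $p$ with $G^+_{\Psi_\a}(p)>0$ eventually enters $\Omega$; that is a filtration-type statement whose proof, in this skew-product setting, again most naturally goes through the identity $G^+_{\Psi_\a}=G^+_{\phi_\a}\circ h$ and the known filtration for $\phi_\a$. The paper avoids all of this: it deduces connectedness directly from the slow ($\le\widetilde l\log^+\|p\|+O(1)$) growth of the psh function $G^+_{\Psi_\a}$ via Heins-type arguments, the same mechanism you already use for the infinite-measure claim on lines. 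So the two gaps are linked: the central lemma the paper uses, and which your proposal lacks, is the factorization $G^+_{\Psi_\a}=G^+_{\phi_\a}\circ h$.
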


\begin{theoalph}
Assume $0 < |\alpha| < 1$. Then $\Psi_\a$ has a 
unique periodic point at finite distance, $0_{\C^3}=(0,0,0)$, which is a saddle
point of index $2$. The fixed hypersurface $\{z_2=0\}$ attracts any other point. 
Moreover, the set $K_{\Psi_\a}^+$ of points with bounded forward orbit is exactly 
the stable manifold $W_{\Psi_\a}^s(0_{\C^3})$, and the latter can be characterized
analytically. The set $J_{\Psi_\a}^+:=\partial K_{\Psi_\a}^+$ thus 
corresponds to $\overline{W_{\Psi_\a}^s(0_{\C^3})}$. 
\end{theoalph}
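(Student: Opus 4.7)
The plan is to establish the successive assertions in order, with the analytic description of $W^s_{\Psi_\a}(0_{\C^3})$ being the main technical point. First, if $\Psi_\a^n(p)=p$ the third coordinate yields $\alpha^n z_2=z_2$; since $0<|\alpha|<1$ this forces $z_2=0$, and on the invariant plane $\{z_2=0\}$ the map $\Psi_\a$ restricts to the linear cat map $M\colon(z_0,z_1)\mapsto(z_0+z_1,z_0)$. Its eigenvalues $\varphi$ and $-1/\varphi$ are never roots of unity, so $M^n-I$ is invertible for every $n\geq 1$ and the origin is the unique periodic point. A direct computation of
\[
D\Psi_\a(0_{\C^3})=\begin{pmatrix} 1 & 1 & 0 \\ 1 & 0 & 0 \\ 0 & 0 & \alpha \end{pmatrix}
\]
gives eigenvalues $\varphi>1$ together with $-1/\varphi$ and $\alpha$ (the latter two of modulus $<1$), so $0_{\C^3}$ is a hyperbolic saddle with two-dimensional stable and one-dimensional unstable directions. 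The attraction of $\{z_2=0\}$ is immediate from $|\alpha^n z_2|\to 0$.

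To prove $K^+_{\Psi_\a}=W^s_{\Psi_\a}(0_{\C^3})$, the inclusion $\supseteq$ is obvious. For the converse, given $p\in K^+_{\Psi_\a}$ its $\omega$-limit set $\omega(p)$ is a non-empty, compact, fully $\Psi_\a$-invariant subset of $\{z_2=0\}$. I would argue that the only compact fully invariant subset of $\C^2$ under $M$ is $\{0\}$: writing any $q\in\omega(p)$ in the eigenbasis of $M$, a non-zero unstable component forces $\|M^n q\|\to\infty$, while a non-zero stable component with vanishing unstable part forces $\|M^{-n}q\|\to\infty$, each contradicting the compactness and full invariance of $\omega(p)$. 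Hence $\omega(p)=\{0\}$, so $\Psi_\a^n(p)\to 0_{\C^3}$ and $p\in W^s_{\Psi_\a}(0_{\C^3})$.

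For the analytic characterization, I would first appeal to the holomorphic stable manifold theorem at the saddle $0_{\C^3}$ to produce a local complex-analytic $2$-disc $W^s_{\mathrm{loc}}(0)$ tangent to the stable eigenspace, and then write
\[
W^s_{\Psi_\a}(0_{\C^3})=\bigcup_{n\geq 0}\Psi_\a^{-n}\bigl(W^s_{\mathrm{loc}}(0)\bigr),
\]
which already exhibits the stable manifold as an increasing union of closed analytic subvarieties of $\C^3$. For a more explicit defining equation I would use the left eigencovector $\pi_u(z_0,z_1):=z_0+z_1/\varphi$ of $M$ for the eigenvalue $\varphi$, observe the identity
\[
\pi_u\bigl(\Psi_\a(p)\bigr)=\varphi\,\pi_u(p)+z_0^q z_2^d,
\]
and iterate it to
\[
\varphi^{-n}\pi_u\bigl(\Psi_\a^n(p)\bigr)=\pi_u(p)+\sum_{k=0}^{n-1}\varphi^{-k-1}\bigl(z_0^{(k)}\bigr)^q\bigl(\alpha^k z_2\bigr)^d.
\]
The step I expect to be most delicate is showing that this series converges on a suitable open neighbourhood of $W^s_{\Psi_\a}(0_{\C^3})$ -- using the Fibonacci-versus-$q^n$ dichotomy from the introduction together with the Hölder estimates on $G^+_{\Psi_\a}$ supplied by Theorem~B -- and identifying $W^s_{\Psi_\a}(0_{\C^3})$ with the zero set of the resulting holomorphic function $h$. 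Once this is in place, the last assertion follows at once: a properly immersed complex surface in $\C^3$ has empty interior, so $\partial K^+_{\Psi_\a}=\overline{W^s_{\Psi_\a}(0_{\C^3})}$, whence $J^+_{\Psi_\a}=\overline{W^s_{\Psi_\a}(0_{\C^3})}$.
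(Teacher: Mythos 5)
Your dynamical argument for the central inclusion $K^+_{\Psi_\a}\subset W^s_{\Psi_\a}(0_{\C^3})$ is correct but genuinely different from the paper's. You use the $\omega$-limit set: for $p\in K^+_{\Psi_\a}$, since $\alpha^n p_2\to 0$, the compact fully invariant set $\omega(p)$ lies in $\{z_2=0\}$ where $\Psi_\a$ acts as the hyperbolic linear map $M=(z_0+z_1,z_0)$, and the only compact fully $M$-invariant set is $\{0\}$; hence $\Psi_\a^n(p)\to 0_{\C^3}$. The paper instead routes everything through the series $g$ (Lemma~\ref{lemmarecphi}, Corollary~\ref{unbounded}, Proposition~\ref{stablemanifold}): a point with bounded orbit satisfies $g(p)=0$ and the tails decay fast enough that $\sum_j|r_j(p)|\varphi^j<\infty$, which is shown (by a clever telescoping of the recurrence) to force $P_\a^{(n)}(p)\to 0$. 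Your route is shorter and more conceptual for the set-equality $K^+=W^s$; the paper's route is more work but simultaneously delivers the analytic characterization. Both are valid.

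Your plan for the analytic characterization, however, has two concrete problems. First, the covector identity $\pi_u(\Psi_\a(p))=\varphi\,\pi_u(p)+z_0^q z_2^d$ and its telescoped form are exactly the paper's Lemma~\ref{lemmarecphi} (your $\varphi^{1-n}\pi_u(\Psi_\a^n(p))$ is $g_{n-1}(p)$), so that part is fine — but you then propose to ``show the series converges on a suitable open neighbourhood of $W^s$''. This is false in the regime $\varphi^{(1-q)/d}<|\alpha|<1$: there any point off $\{z_2=0\}$ and off $W^s$ has $G^+_{\Psi_\a}>0$, so $|P_\a^{(n)}(p)|$ grows like $\eta^{q^n}$ and the series diverges; the domain of convergence $\mathcal{D}$ is then exactly $W^s\cup\{z_2=0\}$, a set of empty interior, not a neighbourhood of $W^s$. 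Second, the H\"older continuity of $G^+_{\Psi_\a}$ from Theorem~B plays no role here; the paper's Proposition~\ref{stablemanifold} gives the precise statement — $p\in W^s$ iff $g(p)=0$ \emph{and} $\sum_j|r_j(p)|\varphi^j<\infty$ — and its proof is a direct manipulation of the recurrence, with the summability condition doing the real work (it is needed to propagate ``$g_n(p)$ small'' into ``$P_\a^{(n)}(p)\to 0$''). Your sketch omits this condition entirely. Finally, a small slip: $\Psi_\a^{-n}(W^s_{\mathrm{loc}}(0))$ is a submanifold with boundary, not a closed analytic subvariety of $\C^3$; the union is only an injectively immersed copy of $\C^2$, which nevertheless suffices for the last assertion since a $2$-dimensional complex submanifold of $\C^3$ has empty interior, giving $\partial K^+_{\Psi_\a}=\overline{K^+_{\Psi_\a}}=\overline{W^s_{\Psi_\a}(0_{\C^3})}$ as you say.
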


We observe a phase transition in the dynamics of the family $\{\Psi_\a\}_{0<|\a|\leq 1}$ 
for the value $|\a|= \varphi^{(1-q)/d}$:

\begin{theoalph}
Assume $0<\vert\alpha\vert<\varphi^{(1-q)/d}$. The set $\mathcal{V}:=\big\{p\in\C^3\ \vert\ G_{\Psi_\a}^+(p)=0\big\}$ 
is a closed neighborhood of the hyperplane $\{z_2=0\}$. It consists in the disjoint 
union $\Omega'' \sqcup W_{\Psi_\a}^s(0_{\C^3})$, where $\Omega''$ has non-empty interior 
and the forward orbit of any point $p \in \Omega''$ goes to infinity with Fibonacci speed. 
\end{theoalph}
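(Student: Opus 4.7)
The proof will hinge on the elementary observation that the condition $|\alpha| < \varphi^{(1-q)/d}$ is equivalent to $\rho := |\alpha|^d \varphi^{q-1} < 1$. This is precisely the threshold below which the nonlinear contribution $z_0^q z_2^d$ is geometrically smaller than the linear Fibonacci growth on the invariant hyperplane $\{z_2=0\}$, so the strategy is to treat $\Psi_\a$ near $\{z_2=0\}$ as a summable perturbation of the linear Anosov map $(z_0,z_1)\mapsto(z_0+z_1,z_0)$ in the fiber, and read off both assertions of the theorem from this linearization.

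To show that $\mathcal{V}$ contains an open neighborhood of $\{z_2=0\}$, I would pass to coordinates $(u,v)$ on the $(z_0,z_1)$-plane diagonalizing the linear part, with eigenvalues $\varphi$ and $-\varphi^{-1}$, and use an adapted norm $\|\cdot\|_*$ in which its operator norm is exactly $\varphi$. The plan is a bootstrap on $\|\Psi_\a^n(p)\|_* \leq C(p)\varphi^n$: the linear part contributes at most $\varphi\cdot C(p)\varphi^n$, while the nonlinear term is bounded by $(C(p)\varphi^n)^q|\alpha|^{nd}|z_2|^d \lesssim |z_2|^d\rho^n \varphi^n$, which is summable since $\rho<1$. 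Taking $|z_2|$ small enough depending on $C(p)$ closes the induction and yields $\|\Psi_\a^n(p)\| = O(\varphi^n)$, hence $G_{\Psi_\a}^+(p)=0$ on such a neighborhood.

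By Theorem~C the points with bounded forward orbit coincide with $W_{\Psi_\a}^s(0_{\C^3})$, so defining $\Omega'':=\mathcal{V}\setminus W_{\Psi_\a}^s(0_{\C^3})$ gives the disjoint decomposition for free. For Fibonacci speed on $\Omega''$ I would write the orbit in the eigencoordinates as $(u_n,v_n,w_n)$ with $w_n=\alpha^n z_2$. The bootstrap gives $|u_n|,|v_n|=O(\varphi^n)$, so the nonlinear increments $\epsilon_n:=(z_0^{(n)})^q w_n^d$ are $O(\rho^n\varphi^n)$. Projecting the recursion onto the unstable direction and dividing by $\varphi^{n+1}$ turns the sequence $u_n\varphi^{-n}$ into a telescoping series with geometrically decaying increments, hence convergent to some $u_\infty(p)$; at the same time $v_n\varphi^{-n}$ and $w_n\varphi^{-n}$ decay exponentially. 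Therefore $\Psi_\a^n(p)\varphi^{-n}$ converges to $u_\infty(p)$ times the unstable eigenvector, and by Theorem~C one has $p\in W_{\Psi_\a}^s(0_{\C^3})$ if and only if $u_\infty(p)=0$.

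Finally, $\Omega''$ has nonempty interior: any point in the neighborhood from the first step with nonzero unstable coordinate $u_0$ satisfies $u_\infty(p)=u_0+\sum_{k\geq 0}\epsilon_k\varphi^{-(k+1)}$, and the series is made arbitrarily small by shrinking $|z_2|$, so $u_\infty(p)\neq 0$ on an explicit open set. The main obstacle I anticipate is to close the bootstrap \emph{uniformly} on a neighborhood of $\{z_2=0\}$ rather than merely pointwise: this uniformity is what guarantees that $\mathcal{V}$ really is a neighborhood of the hyperplane and that $u_\infty$ is continuous, both of which are needed to extract the open subset of $\Omega''$. The uniform geometric decay in $\rho^n$ should handle this by expressing $u_\infty$ as a locally uniform limit of continuous functions on the neighborhood produced in the first step.
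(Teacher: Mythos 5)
Your bootstrap in eigencoordinates is essentially the paper's argument for the first assertion: the condition $\rho := |\alpha|^d\varphi^{q-1} < 1$ makes the nonlinear increments summable, which closes the induction $\|\Psi_\a^n(p)\| = O(\varphi^n)$ on an explicit tube around $\{z_2=0\}$ (the paper uses $\Omega' := \{(|p_0|+|p_1|)^{q-1}|p_2|^d < \varphi\varepsilon\}$ with $\varepsilon$ chosen so that $((1+\varepsilon)\varphi)^q|\a|^d<\varphi$), and your telescoping series for the unstable coordinate captures the Fibonacci limit on that tube.

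There is, however, a genuine gap in the second half. You must establish Fibonacci speed on all of $\Omega'' := \mathcal{V}\setminus W_{\Psi_\a}^s(0_{\C^3})$, where $\mathcal{V} = \{G_{\Psi_\a}^+=0\}$, and $\mathcal{V}$ is strictly larger than the explicit tube where your bootstrap runs: a point $p$ with $|p_2|$ not small can perfectly well have $G_{\Psi_\a}^+(p)=0$, and for such a point you have no a priori bound $|P_\a^{(n)}(p)| = O(\varphi^n)$. The sentence "the bootstrap gives $|u_n|,|v_n|=O(\varphi^n)$" for $p\in\Omega''$ therefore does not follow from your first step; it is precisely what still has to be proved.

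The paper supplies the missing input by transferring to the H\'enon map $\phi_\a$ through the identity $G_{\Psi_\a}^+ = G_{\phi_\a}^+\circ h$. For a H\'enon map one has $\{G_{\phi_\a}^+=0\}=K_{\phi_\a}^+$, and when $|\alpha|<\varphi^{(1-q)/d}$ the origin $0_{\C^2}$ is a \emph{sink} of $\phi_\a$, so $K_{\phi_\a}^+ = W_{\phi_\a}^s(0_{\C^2})$. Hence $G_{\Psi_\a}^+(p)=0$ forces $\phi_\a^n(h(p))\to 0_{\C^2}$; unwinding the semi-conjugacy $\theta\circ\Psi_\a^n=\Phi_\a^n\circ\theta$ gives $P_\a^{(n)}(p)(\a^n p_2)^l\to 0$, so $|P_\a^{(n)}(p)|^{q-1}|\a^n p_2|^d\to 0$, the cocycle $A(\Psi_\a^n(p))$ converges to the Fibonacci matrix $M_0$, and the cone argument then yields exactly Fibonacci growth for any $p\notin W_{\Psi_\a}^s(0_{\C^3})$. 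This global dichotomy for H\'enon maps (bounded forward orbit if and only if $G_{\phi_\a}^+$ vanishes) is the input your elementary bootstrap cannot see; without it you only obtain the conclusion on the explicit neighborhood, not on all of $\mathcal{V}$.
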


Note that we also define an analytic function $g$ whose domain of definition is equal 
to $\mathcal{V}$, and which parametrizes the stable manifold in the sense that 
$W_{\Psi_\a}^s(0_{\C^3})$ coincides with the zero set $\mathcal{Z}$ of $g$.

\begin{theoalph}
Assume now $\varphi^{(1-q)/d}<\vert\alpha\vert<1$. For any $p \in \C^3$, 
exactly one of the following cases occurs:
\smallskip
\begin{itemize}
\item either $p \in W_{\Psi_\a}^s(0_{\C^3})$ and its forward orbit converges 
to $0_{\C^3}$ exponentially fast;
\smallskip
\item or $p \in \{z_2=0\} \smallsetminus W_{\Psi_\a}^s(0_{\C^3})$ and it goes to infinity with Fibonacci 
speed;
\smallskip
\item or the speed explodes: $G_{\Psi_\a}^+(p)>0$. 
\end{itemize}
\end{theoalph}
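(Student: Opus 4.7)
The plan is to combine the semi-conjugacy $\pi\circ\Psi_\a=\phi_\a\circ\pi$ introduced earlier (with $\pi(z_0,z_1,z_2)=(z_2^l z_0,z_2^l z_1)$ and $\phi_\a=\alpha^l\psi_1$) with the observation that, precisely in the regime $\varphi^{(1-q)/d}<|\alpha|<1$, the origin becomes a hyperbolic saddle for $\phi_\a$ in $\C^2$: the eigenvalues of $D\phi_\a(0)$ are $\alpha^l\varphi$ (expanding, since $|\alpha|^l\varphi>1$) and $-\alpha^l/\varphi$ (contracting). This hyperbolic-versus-attracting transition at $|\alpha|=\varphi^{(1-q)/d}$ is exactly what distinguishes Theorem~E from Theorem~D.

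First I would dispose of the easy directions. The three cases are pairwise disjoint: any $p\in W^s_{\Psi_\a}(0_{\C^3})$ has $\Psi_\a^n(p)\to 0$ so $G^+_{\Psi_\a}(p)=0$, ruling out case~3; in case~2 we have $\|\Psi_\a^n(p)\|\sim\varphi^n$ and $\varphi^n/q^n\to 0$ (since $q\ge 2>\varphi$), so again $G^+_{\Psi_\a}(p)=0$; cases~1 and 2 are disjoint by construction. For a point $p\in\{z_2=0\}$, the trichotomy reduces at once to the Anosov dichotomy stated in the paragraph preceding Theorem~B, placing $p$ in case~1 or case~2 according to whether or not it lies in the one-dimensional stable direction of the linear Anosov map on the invariant hyperplane.

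The core of the argument treats $p\in\C^3$ with $z_2\neq 0$. Setting $u_n:=\phi_\a^n(\pi(p))$ and $c_n:=\alpha^n z_2$, the semi-conjugacy yields $\Psi_\a^n(p)=(u_n/c_n^l,c_n)$ in the obvious coordinates, so $\|\Psi_\a^n(p)\|\asymp\|u_n\|\cdot|z_2|^{-l}|\alpha|^{-nl}$. If the forward $\phi_\a$-orbit $(u_n)$ is unbounded, a H\'enon-type filtration argument for $\phi_\a$ (entirely parallel to the one used for $\Psi_\a$ in Theorem~A) produces $\log\|u_n\|/q^n\to G^+_{\phi_\a}(\pi(p))>0$; since the correction factor $|\alpha|^{-nl}$ is merely exponential in $n$, it is negligible on the $q^n$-scale and one obtains $G^+_{\Psi_\a}(p)=G^+_{\phi_\a}(\pi(p))>0$, placing $p$ in case~3. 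If $\pi(p)\in K^+_{\phi_\a}$, the goal is to deduce $\pi(p)\in W^s_{\phi_\a}(0)$, so that $\|u_n\|\le C(|\alpha|^l/\varphi)^n$, which combined with the scaling gives $\|\Psi_\a^n(p)\|\le C'\varphi^{-n}\to 0$ exponentially, i.e.\ case~1.

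The hardest step is showing that every bounded forward $\phi_\a$-orbit actually collapses onto the stable manifold of the saddle. For a generic H\'enon-like automorphism this would be false (further periodic points, horseshoes, \emph{etc.}~can live in $K^+$), so one must exploit the particular form $\phi_\a=\alpha^l\psi_1$ and the specific range $\varphi^{(1-q)/d}<|\alpha|<1$. The most natural route is to carry over the analytic parametrization used in Theorem~D: by telescoping the perturbation $a_n^q c_n^d$ against the linear Fibonacci part, construct an analytic function whose zero set coincides with $W^s(0_{\C^3})$ and whose domain exhausts $\{G^+_{\Psi_\a}=0\}\setminus\{z_2=0\}$. The key quantitative fact is that in this regime $|\alpha|^{-l}<\varphi$, so the only exponential rate compatible with a bounded $(u_n)$ is the contracting rate $|\alpha|^l/\varphi<1$; any orbit that fails to shadow the stable direction is forced by the recurrence $a_{n+1}=a_n+a_{n-1}+a_n^q c_n^d$ to excite the nonlinear term and acquire super-exponential growth, landing back in case~3. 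Turning this shadowing idea into a rigorous dichotomy is what I expect to be the main technical obstacle.
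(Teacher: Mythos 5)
Your structural plan is sound and closely parallels the paper's: the same semi-conjugacy projected through $h(z_0,z_1,z_2)=(z_0z_2^l,z_1z_2^l)$ (your $\pi$), the observation that $0_{\C^2}$ is a saddle of $\phi_\a$ precisely when $\varphi^{(1-q)/d}<|\alpha|<1$, the reduction of the whole theorem to the single claim $K^+_{\phi_\a}=W^s_{\phi_\a}(0_{\C^2})$, and the computation showing that $h(p)\in W^s_{\phi_\a}(0_{\C^2})$ forces $\|\Psi_\a^n(p)\|\lesssim\varphi^{-n}\to 0$. The easy steps (disjointness of the three cases, the Anosov dichotomy on $\{z_2=0\}$) are also handled correctly.

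The gap is exactly the step you yourself flag as ``the main technical obstacle'': you never establish that every bounded forward $\phi_\a$-orbit actually belongs to $W^s_{\phi_\a}(0_{\C^2})$. The paper does not re-derive this inside Theorem~E; it is a consequence of Corollary~\ref{corvarstable} ($K^+_{\Psi_\a}=W^s_{\Psi_\a}(0_{\C^3})$ for all $0<|\alpha|<1$), proved via the series $g$ of Lemma~\ref{lemmarecphi} and Proposition~\ref{stablemanifold} --- precisely the ``analytic parametrization'' you allude to, except that it is already available in this generality and does not need to be ``carried over'' from the smaller-$|\alpha|$ regime of Theorem~D. From it and the semi-conjugacy one obtains $K^+_{\phi_\a}=W^s_{\phi_\a}(0_{\C^2})$ (\S \ref{subsgeneralrem}). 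The paper also packages the final argument differently from yours: Proposition~\ref{proptrichotomy} is first proved by a direct cocycle estimate on the matrices $A(\Psi_\a^n(p))$, and it admits an a priori extra case $\mathcal{S}$ (sub-Fibonacci growth with $\limsup_n|P_\a^{(n)}(p)|^{q-1}|\alpha|^{nd}>0$); only then is the semi-conjugacy used to rule out $\mathcal{S}$, by observing that $G^+_{\Psi_\a}(p)=0$ would force $h(p)\in W^s_{\phi_\a}(0_{\C^2})$, hence $|P_\a^{(n)}(p)|\,|\alpha|^{nl}\to 0$, contradicting the limsup estimate since $l=d/(q-1)$. Your more direct route (split on whether $\phi_\a^n(h(p))$ is bounded) does work once Corollary~\ref{corvarstable} is invoked, but as written the proposal leaves the crux of the argument unestablished.
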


In particular, contrary to the previous situation where the set $\Omega''$ has non-empty interior, 
we see here that Fibonacci speed does not occur 
outside the hypersurface $\{z_2 = 0\}$.

\begin{rem}
We stress the fact that for any $0<|\a|<1$, the forward orbit of a point under $\Psi_\a$
 is bounded if 
and only if it goes to $0_{\C^3}$. Moreover, we see that in the case the orbit 
is unbounded, it has to escape to infinity. 
This rigidity phenomenon is related to the properties of the automorphism of Hénon 
type $\phi_\a$ to which 
$\Psi_\a$ is semi-conjugate, and which possesses an attractor at infinity such that 
the positive iterates of any point whose forward orbit is not bounded escape to it. 
\end{rem}

\begin{theoalph}
Assume $|\a|=1$.
We define $K_{\Psi_\a}$ to be the set of points $p \in \C^3$ whose orbit $(\Psi_\a^n(p))_{n \in \mathbb{Z}}$ is bounded. Similarly to what we did above, 
we define the Green function 
$G_{\Psi_\a}^-$. Then for any point $p \in \C^3$, exactly one of the following 
 assertions is satisfied:
\begin{itemize}
\smallskip
\item either the orbit of $p$ is bounded, i.e.\ $p \in K_{\Psi_\a}$;
\smallskip
\item or $p \in \{z_2=0\} \smallsetminus \{0_{\C^3}\}$ and either 
its forward orbit or its backward orbit escapes to infinity with Fibonacci speed;
\smallskip
\item or $G_{\Psi_\a}^+(p) >0$ or $G_{\Psi_\a}^-(p)>0$; in this case, 
either its forward orbit or its backward orbit escapes to infinity with maximal speed.
\end{itemize}
\smallskip

We define the associate Green currents
$T_{\Psi_\a}^\pm:=\mathrm{dd^c}G_{\Psi_\a}^\pm$, and 
we set $\mu_{\Psi_\a}:=T_{\Psi_\a}^+\wedge T_{\Psi_\a}^- \wedge dz_2 
\wedge d\overline{z_2}$. The measure $\mu_{\Psi_\a}$ is invariant by 
$\Psi_\a$ and supported on the Julia set $J_{\Psi_\a}:=\partial K_{\Psi_\a}$. 
For any $p_2 \neq 0$, the set 
$\mathcal{C}_{p_2}:=\mathbb{C}^2 \times \big\{p_2 e^{ix}\ \vert\ x \in \mathbb{R}\big\}$ 
is invariant under $\Psi_\a$. Define $\mathcal{J}_{p_2}:=J_{\Psi_\a} \cap \mathcal{C}_{p_2}$; 
it is also invariant and we show that when $\a$ is not a root of unity, 
$(\restriction{\Psi_\a}{\mathcal{J}_{p_2}}, \mu_{\Psi_\a})$ is ergodic.
\end{theoalph}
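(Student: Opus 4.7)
The plan is to work in three stages: establish the trichotomy, construct and check invariance of the currents $T_{\Psi_\a}^\pm$ and the measure $\mu_{\Psi_\a}$, and then address ergodicity on $\mathcal{J}_{p_2}$. For the first stage, since $|\a|=1$ the modulus of the third coordinate is preserved by $\Psi_\a$, so the forward/backward picture is symmetric. I would first replicate the construction of Theorem B with $\Psi_\a^{-1}$ in place of $\Psi_\a$ (both have first dynamical degree $q$ by the opening proposition) to obtain a plurisubharmonic, Hölder continuous $G_{\Psi_\a}^-$ satisfying $G_{\Psi_\a}^-\circ\Psi_\a^{-1}=q\cdot G_{\Psi_\a}^-$. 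One then has $K_{\Psi_\a}=K_{\Psi_\a}^+\cap K_{\Psi_\a}^-$ and $\{G_{\Psi_\a}^\pm>0\}\subset \C^3\smallsetminus K_{\Psi_\a}^\pm$. The nontrivial point is that if $p\notin K_{\Psi_\a}$ and $G_{\Psi_\a}^\pm(p)=0$, then $p$ must lie in $\{z_2=0\}\smallsetminus\{0_{\C^3}\}$. When $z_2\neq 0$ I would exploit the semi-conjugacy $\Psi_\a\to\phi_\a$: inside the invariant set $\mathcal{C}_{p_2}$, the fiberwise substitution $(w_0,w_1)=(z_0 z_2^l,z_1 z_2^l)$ (on a suitable cover if $l=d/(q-1)$ is not an integer) turns the non-autonomous composition $(\psi_{z_2})_n$ into iteration of the single Hénon-type automorphism $\phi_\a$, and the standard Hénon dichotomy for $\phi_\a$ rules out unbounded orbits with vanishing Green function. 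On the fixed plane $\{z_2=0\}$ the map is the linear cat-like automorphism with eigenvalues $\varphi$ and $-\varphi^{-1}$, so the trichotomy reduces to its hyperbolic splitting: only the origin is bi-bounded, and every other point escapes to infinity with Fibonacci speed in forward or backward time.

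Invariance of $\mu_{\Psi_\a}$ falls out once the pieces are in place. From $G_{\Psi_\a}^+\circ\Psi_\a=q\cdot G_{\Psi_\a}^+$ one gets $\Psi_\a^*T_{\Psi_\a}^+=q\cdot T_{\Psi_\a}^+$; the analogous identity for $\Psi_\a^{-1}$ yields $\Psi_\a^*T_{\Psi_\a}^-=q^{-1}\cdot T_{\Psi_\a}^-$; and $\Psi_\a^*(dz_2\wedge d\overline{z_2})=|\a|^2\, dz_2\wedge d\overline{z_2}=dz_2\wedge d\overline{z_2}$. The three factors multiply to $\Psi_\a^*\mu_{\Psi_\a}=\mu_{\Psi_\a}$. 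Since the support of $T_{\Psi_\a}^\pm$ is contained in $\partial K_{\Psi_\a}^\pm$, the wedge is supported on $\partial K_{\Psi_\a}^+\cap\partial K_{\Psi_\a}^-\subset J_{\Psi_\a}$, exactly as in the classical Hénon case. Invariance of $\mathcal{C}_{p_2}$, and hence of $\mathcal{J}_{p_2}$, is immediate from $|\a z_2|=|z_2|$.

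The main obstacle is the ergodicity of $(\restriction{\Psi_\a}{\mathcal{J}_{p_2}},\mu_{\Psi_\a})$ when $\a$ is not a root of unity. I would exploit the skew-product structure of $\Psi_\a$ over the irrational rotation $R_\a\colon z_2\mapsto\a z_2$ on the circle $\{|z_2|=|p_2|\}$, which is uniquely ergodic. Disintegrating $\mu_{\Psi_\a}$ along the projection $\pi\colon\mathcal{C}_{p_2}\to\{|z_2|=|p_2|\}$, the factor $dz_2\wedge d\overline{z_2}$ guarantees that $\pi_*\mu_{\Psi_\a}$ is Lebesgue measure on the circle, while the conditional measures on the fibers $\C^2\times\{z_2\}$ should be identified, via the fiberwise conjugacy above, with the measure of maximal entropy $T_{\phi_\a}^+\wedge T_{\phi_\a}^-$ of the Hénon-type automorphism $\phi_\a$. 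If $A\subset\mathcal{J}_{p_2}$ is $\Psi_\a$-invariant with $\mu_{\Psi_\a}(A)>0$, then $\pi(A)$ is $R_\a$-invariant of positive Lebesgue measure, hence of full measure by unique ergodicity, and the mixing of $\mu_{\phi_\a}$ on the Julia set of $\phi_\a$ applied in the fibers forces $\mu_{\Psi_\a}(A)=\mu_{\Psi_\a}(\mathcal{J}_{p_2})$. The delicate point will be the rigorous identification of the conditional measures: one must verify that $T_{\Psi_\a}^+$ and $T_{\Psi_\a}^-$ meet the foliation by fibers of $\pi$ transversally enough for the wedge to slice cleanly, and that the fiberwise conjugacy intertwines $\Psi_\a$ with the autonomous iteration of $\phi_\a$ up to a cocycle that is absorbed into the normalization.
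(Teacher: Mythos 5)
Your treatment of the trichotomy and of the invariance of $\mu_{\Psi_\a}$ matches the paper's: Proposition \ref{propdyn1} gives the three-fold alternative on $\mathcal{U}=\{z_2\neq 0\}$, and the possible intermediate regime ($p\in\mathcal{Z}\smallsetminus K_{\Psi_\a}^+$) is killed exactly as you suggest, by pulling back the Hénon dichotomy $K_{\phi_\a}^\pm=\{G_{\phi_\a}^\pm=0\}$ through $h$ and using \eqref{eqconjug}; on $\{z_2=0\}$ the linear hyperbolic picture with eigenvalues $\varphi,\varphi'$ gives the Fibonacci case. The identity $\Psi_\a^*(dz_2\wedge d\overline{z_2})=|\a|^2\,dz_2\wedge d\overline{z_2}$ together with $\Psi_\a^*T_{\Psi_\a}^\pm=q^{\pm 1}T_{\Psi_\a}^\pm$ gives invariance, exactly as written.

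For ergodicity your route diverges from the paper's, and the divergence is where your acknowledged ``delicate point'' lives. The paper does \emph{not} disintegrate $\mu_{\Psi_\a}$ along the projection to the circle. Instead it observes that $\restriction{\theta}{\mathcal{U}}$ is a genuine biholomorphism which, on each invariant set $\mathcal{C}_{p_2}$ ($p_2\neq 0$), conjugates $\restriction{\Psi_\a}{\mathcal{C}_{p_2}}$ to the \emph{direct product} $\restriction{\Phi_\a}{\mathcal{C}_{p_2}}=\phi_\a\times R_\a$, and that \eqref{equmesur} identifies $\restriction{\mu_{\Psi_\a}}{\mathcal{U}}$ with the pullback of the product measure $\mu_{\Phi_\a}=\mu_{\phi_\a}\wedge dz_2\wedge d\overline{z_2}$. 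The ergodicity of $(\restriction{\Phi_\a}{\mathcal{C}_{p_2}},\mu_{\Phi_\a})$ then follows in one line from the classical fact that a weakly mixing system times an ergodic system is ergodic, since $(\phi_\a,\mu_{\phi_\a})$ is mixing and the irrational rotation $R_\a$ on $\{|z_2|=|p_2|\}$ is ergodic for the circle Lebesgue measure. Your disintegration-plus-unique-ergodicity argument is morally equivalent and should work, but it forces you to justify that the conditional measures along $\pi$ are the fiberwise pullbacks of $\mu_{\phi_\a}$ and that the wedge of currents ``slices'' compatibly with the fibration, which is precisely the technical overhead the conjugation-to-a-product route avoids. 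Also, the ``cocycle'' worry you raise is not actually present: the conjugacy $\theta$ intertwines $\Psi_\a$ with the autonomous map $\Phi_\a=(\phi_\a,\a z_2)$ on the nose (recall $\phi_\a=\a^l\psi_1$ already carries the normalizing factor), so there is no residual cocycle to absorb. Finally, your parenthetical about needing a cover when $l=d/(q-1)\notin\Z$ is a legitimate caveat that the paper treats lightly; if you choose to keep the disintegration route you would need to resolve it, whereas working with $G_{\phi_\a}^+\circ h$ and the current-level identity \eqref{equmesur} (which only involve the single-valued modulus data on the relevant invariant sets) sidesteps it as well.
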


\subsection*{Acknowledgement} The first author would like to thank Artur Avila 
for helpful and fruitful discussions.

\section{Invariant fibrations and degree growths}

\subsection{Invariant fibrations}

Let us come back to dimension $2$ for a while. As recalled in \S \ref{sec:intro}
if $f$ is a polynomial automorphism of~$\mathbb{C}^2$, then up to conjugacy
either $f$ is an elementary automorphism or $f$ is of H\'enon type. In the 
first case $f$ preserves a rational fibration, whereas in the
second one $f$ does not preserve any rational curve~(\cite{Brunella}). 
This gives a geometric criterion to distinguish maps of H\'enon type and
elementary maps. What about dimension $n\geq 3$? Contrary to the 
$2$-dimensional case we will see, as soon as $n=3$, that "no invariant 
rational curve" does not mean "first dynamical degree $>1$".

Assume that $0<|\alpha|\leq 1$. 
Let $\Phi_\alpha$ be the automorphism of $\C^3$ given by 
\[
\Phi_\alpha=(\alpha^l (z_0 + z_1 + z_0^q),\alpha^l z_0,\alpha z_2)
\]
where $l:=\frac{d}{q-1}$. It is possible to show that $\Psi_\a$ is 
conjugate to $\Phi_\a$ through the \emph{birational map} of $\mathbb{P}^3_\mathbb{C}$
given in the affine chart $z_3=1$ by
\[ 
\theta=(z_0z_2^l,z_1 z_2^l,z_2), 
\]
that is $\theta \circ \Psi_\a = \Phi_\a \circ \theta.$
The advantage is that the action of $\Phi_\a$ in the fibers is independent of the 
base point. Moreover, it has a lot of good properties; in particular, we will see that 
it is algebraically stable (\S \ref{sec:degree}). Nevertheless $\theta$ is birational  
so we might loose some information (\S \ref{sec:lost}). 

\begin{prop}\label{pro:pasdefib}
For any $0<|\alpha|\leq 1$, the polynomial automorphism $\Phi_\alpha$ preserves a unique rational fibration, the 
fibration given by $\{z_2=\text{cst}\}$.
\end{prop}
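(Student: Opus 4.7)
The fibration $\{z_2 = \text{cst}\}$ is visibly $\Phi_\alpha$-invariant, so only uniqueness is at stake. My plan is to reduce uniqueness to the classical two-dimensional result that a Hénon-type polynomial automorphism of $\mathbb{C}^2$ admits no invariant rational fibration (which follows, for infinite-order action of the Möbius on the base, from the algebraic stability of Hénon maps — bounded-degree fibers of a fibration cannot absorb the degree growth $q^n$ of iterated curves — and for finite-order action from Brunella's theorem ruling out invariant algebraic curves). The bridge to dimension two is that $\Phi_\alpha$ acts on the first two coordinates as $\tilde\phi_\alpha(z_0, z_1) := (\alpha^l(z_0 + z_1 + z_0^q),\, \alpha^l z_0)$ \emph{independently of $z_2$}; in particular $\Phi_\alpha$ restricts to $\tilde\phi_\alpha$ on the invariant hyperplane $\{z_2 = 0\}$.

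Suppose $R \in \mathbb{C}(z_0, z_1, z_2)$ defines a $\Phi_\alpha$-invariant rational fibration, i.e., $R \circ \Phi_\alpha = \eta(R)$ for some $\eta \in \mathrm{PGL}_2(\mathbb{C})$. After a Möbius reparametrization of the base $\mathbb{P}^1$ (which replaces $R$ by $T \circ R$ and $\eta$ by $T \eta T^{-1}$), I may assume $\eta$ is affine, $\eta(w) = \beta w + \gamma$. I expand $R$ as a Laurent series in $z_2$ around $\{z_2 = 0\}$,
\[
R = \sum_{k \geq -M} R_k(z_0, z_1)\, z_2^k,\qquad R_k \in \mathbb{C}(z_0, z_1),
\]
valid in a punctured neighborhood of $\{z_2 = 0\}$. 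The functional equation, compared coefficient by coefficient, yields
\[
\alpha^k\, R_k \circ \tilde\phi_\alpha \;=\; \beta R_k + \gamma\, \delta_{k,0}.
\]
For $k \neq 0$ this is $R_k \circ \tilde\phi_\alpha = (\beta/\alpha^k) R_k$; when $\beta \neq 1$ the $k = 0$ equation reduces to the same homogeneous form $S \circ \tilde\phi_\alpha = \lambda S$ via the affine shift $R_0 \mapsto R_0 + \gamma/(\beta - 1)$. A non-constant rational solution $S$ of $S \circ \tilde\phi_\alpha = \lambda S$ would produce an invariant rational fibration of $\mathbb{C}^2$ under $\tilde\phi_\alpha$, its level sets $\{S = c\}$ being permuted by $c \mapsto \lambda c$; this contradicts the two-dimensional non-existence result. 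Hence each such $R_k$ is constant.

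The remaining sub-case is $\beta = 1$, $\gamma \neq 0$, i.e.\ $R_0 \circ \tilde\phi_\alpha = R_0 + \gamma$. I dispose of it by evaluating at a periodic point $p$ of $\tilde\phi_\alpha$ of period $n$ avoiding the pole divisor of $R_0$ (such $p$ exist since Hénon-type maps have periodic points dense in their Julia set, while the pole divisor is a proper algebraic subvariety): the relation then gives $R_0(p) = R_0(p) + n\gamma$, forcing $\gamma = 0$ — a contradiction. Every Laurent coefficient $R_k$ is therefore a constant, so $R$ depends only on $z_2$ and the fibration it defines coincides with $\{z_2 = \text{cst}\}$. The main technical obstacle is the clean invocation of the two-dimensional non-existence of invariant rational fibrations for Hénon-type automorphisms; the Laurent coefficient comparison is legitimate because the series converges in a punctured neighborhood of $\{z_2 = 0\}$ and rational identities propagate from a Zariski-open set.
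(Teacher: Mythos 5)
Your argument is correct, and it supplies a real reduction that the paper leaves entirely implicit. The paper's proof of this proposition is a single sentence: after observing the skew-product structure $\Phi_\alpha = (\phi_\alpha(z_0,z_1), \alpha z_2)$, it cites Brunella for the fact that the H\'enon-type automorphism $\phi_\alpha$ preserves no rational curve and immediately asserts that the unique invariant rational fibration of $\mathbb{C}^3$ is $\{z_2 = \text{cst}\}$, with no explanation of how the two-dimensional fact forces the three-dimensional conclusion. Your Laurent expansion of the fibration function $R$ in $z_2$ with coefficients in $\mathbb{C}(z_0,z_1)$, followed by coefficient matching against the Möbius relation $R\circ\Phi_\alpha=\eta(R)$, is precisely the missing bridge: it reduces everything to showing that each $R_k$ is constant, which is the two-dimensional non-existence of $\phi_\alpha$-invariant rational fibrations. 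You are also right to flag that this last fact is not literally Brunella's theorem on invariant algebraic curves: for a base Möbius action of infinite order one needs to invoke the exponential degree growth $\deg\phi_\alpha^n = q^n$ (incompatible with permuting members of a bounded-degree pencil), and your handling of the residual translation sub-case ($\beta = 1$, $\gamma \neq 0$) by evaluating the cohomological relation along a periodic orbit off the polar locus is a clean way to finish. In short, your strategy is the same as the paper's in spirit --- reduce the three-dimensional statement to the H\'enon map on the fibers --- but where the paper offers only a pointer, you give the actual argument.
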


\begin{cor}
For any $0<|\alpha|\leq 1$, the polynomial automorphism $\Psi_\alpha$ preserves a unique rational fibration, the 
fibration given by $\{z_2=\text{cst}\}$.
\end{cor}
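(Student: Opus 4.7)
The plan is to deduce the corollary from Proposition \ref{pro:pasdefib} by transferring invariant rational fibrations across the birational semi-conjugacy $\theta \circ \Psi_\alpha = \Phi_\alpha \circ \theta$ recalled just above. Since $\theta(z_0,z_1,z_2) = (z_0 z_2^l, z_1 z_2^l, z_2)$ is birational on $\mathbb{P}^3_\mathbb{C}$, it induces a $\mathbb{C}$-algebra automorphism $\theta^*$ of the field $\mathbb{C}(z_0,z_1,z_2)$. An automorphism $f$ of $\mathbb{C}^3$ preserves a rational fibration exactly when there exists a non-constant $\pi \in \mathbb{C}(z_0,z_1,z_2)$ and some $\sigma \in \mathbb{C}(z)$ with $\pi \circ f = \sigma \circ \pi$; two such $\pi$, $\pi'$ define the same fibration precisely when $\mathbb{C}(\pi) = \mathbb{C}(\pi')$ as transcendence-degree-one subfields of $\mathbb{C}(z_0,z_1,z_2)$.

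The first step is to observe that the assignment $\pi \mapsto \pi \circ \theta^{-1}$ induces a bijection between $\Psi_\alpha$-invariant rational fibrations and $\Phi_\alpha$-invariant rational fibrations. Indeed, if $\pi \circ \Psi_\alpha = \sigma \circ \pi$, then
\begin{equation*}
(\pi \circ \theta^{-1}) \circ \Phi_\alpha \;=\; \pi \circ \Psi_\alpha \circ \theta^{-1} \;=\; \sigma \circ (\pi \circ \theta^{-1}),
\end{equation*}
so $\pi \circ \theta^{-1}$ defines a $\Phi_\alpha$-invariant rational fibration, and the inverse assignment $\pi' \mapsto \pi' \circ \theta$ works symmetrically. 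Since $\theta^*$ is a field automorphism, it respects inclusions and equalities of subfields, so the dictionary passes to equivalence classes. By Proposition \ref{pro:pasdefib}, there is a unique $\Phi_\alpha$-invariant rational fibration, corresponding to the subfield $\mathbb{C}(z_2)$; hence $\Psi_\alpha$ also admits exactly one.

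To identify it explicitly, I would use that $\theta$ fixes the third coordinate: $z_2 \circ \theta^{-1} = z_2$, so the unique $\Psi_\alpha$-invariant rational fibration corresponds to the subfield $\mathbb{C}(z_2)$, i.e.\ the fibration $\{z_2 = \mathrm{cst}\}$. This is manifestly preserved, since the third component of $\Psi_\alpha$ is $\alpha z_2$. The only mildly technical step is setting up the correspondence between invariant rational fibrations and transcendence-degree-one subfields carefully, but this is classical; once in place, the whole argument is a one-line transport through $\theta^*$, and this is really the main reason for having introduced the conjugate model $\Phi_\alpha$ in the first place.
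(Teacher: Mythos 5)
The paper does not write out a proof of this corollary at all; it is presented as an immediate consequence of Proposition~\ref{pro:pasdefib}, the birational conjugacy $\theta \circ \Psi_\alpha = \Phi_\alpha \circ \theta$ having been introduced immediately beforehand precisely so that this transfer is obvious. Your argument fills in that implicit step correctly: you verify that a birational conjugacy induces a field automorphism $\theta^*$ of $\mathbb{C}(z_0,z_1,z_2)$, hence a bijection between $\Psi_\alpha$-invariant and $\Phi_\alpha$-invariant rational fibrations, and you use $z_2\circ\theta^{-1}=z_2$ to identify the transported fibration with $\{z_2=\mathrm{cst}\}$. This is the intended reasoning, just made explicit.
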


\begin{proof}[Proof of Proposition \ref{pro:pasdefib}]
Note that $\Phi_\alpha=(\phi_\alpha(z_0,z_1),\alpha z_2)$ where 
$\phi_\alpha\in\mathrm{Aut}(\mathbb{C}^2)$ is the automorphism of H\'enon type
given by $\phi_\alpha\colon(z_0,z_1)\mapsto\alpha^l (z_1 + z_0 + z_0^q, z_0)$. Since 
$\phi_\a$ does not preserve rational curves (\cite{Brunella}) the 
only invariant rational fibration is $\{z_2=\text{cst}\}$.
\end{proof}

\subsection{Degrees and degree growths}\label{sec:degree}

The results in this part hold for any $\alpha \neq 0$.
As we saw in \S\ref{sec:intro} the first dynamical degree is an 
important invariant; in this section we will thus compute 
$\lambda(\Psi_\alpha^{\pm 1})$ and $\lambda(\Phi_\alpha^{\pm 1})$. 
Let us first mention 
a big difference between dimension $2$ and higher dimensions: 
if~$f$ belongs to $\mathrm{Aut}(\mathbb{C}^2)$ then $\deg f=\deg f^{-1}$.
This equality does not necessarily hold in higher dimension; 
nevertheless if $f$ belongs to $\mathrm{Aut}(\mathbb{C}^n)$, then
$\deg f\leq (\deg f^{-1})^{n-1}$ and $\deg f^{-1}\leq (\deg f)^{n-1}$
(\emph{see} \cite{Sibony}).

\begin{lem}\label{degpsi}
We have for any $n\geq 0$ both
\[
\mathrm{deg}(\Psi_\a^n)=q^n + d \times \frac{q^n-1}{q-1}
\]
and 
\[
\deg(\Psi_\alpha^{-n})=\deg(\Psi_\alpha^{n}).
\]
\end{lem}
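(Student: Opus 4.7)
The plan is to read off the degrees from an explicit recursion for the iterates in the first two coordinates. Writing $\Psi_\a^n(z_0,z_1,z_2) = (P_n, Q_n, \a^n z_2)$ with $P_0 = z_0$, $Q_0 = z_1$, $P_1 = z_0 + z_1 + z_0^q z_2^d$, and $Q_1 = z_0$, the definition of $\Psi_\a$ immediately yields, for every $n \geq 1$,
\[
Q_{n+1} = P_n, \qquad P_{n+1} = P_n + P_{n-1} + \a^{nd}\, z_2^d\, P_n^q.
\]

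Setting $d_n := \deg P_n$, I would prove by induction that $d_{n+1} = q\, d_n + d$ for every $n \geq 0$, starting from $d_0 = 1$ and $d_1 = q+d$. The essential point is to rule out cancellation: in the recursion, the summand $\a^{nd} z_2^d P_n^q$ has degree $q\, d_n + d$, which strictly exceeds $\max(d_n, d_{n-1})$ (using $q \geq 2$, $d \geq 1$ and the induction hypothesis $d_{n-1} \leq d_n$), and its leading homogeneous part is nonzero because $\a \neq 0$ and $P_n^q$ has a nonvanishing top-degree form (a nonzero polynomial over $\C$ has a well-defined nonzero leading part). Solving $d_{n+1} + \tfrac{d}{q-1} = q\,\bigl(d_n + \tfrac{d}{q-1}\bigr)$ with $d_0 = 1$ gives $d_n = q^n + d \cdot \frac{q^n - 1}{q-1}$, and since $d_n > \max(d_{n-1}, 1)$ this is exactly $\deg \Psi_\a^n$.

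For the inverse, direct inversion of $\Psi_\a$ produces
\[
\Psi_\a^{-1}(w_0, w_1, w_2) = \bigl(w_1,\ w_0 - w_1 - \a^{-d} w_1^q w_2^d,\ \a^{-1} w_2\bigr),
\]
which has the same shape as $\Psi_\a$ (a linear part plus one monomial of bidegree $(q,d)$), with the roles of the first two coordinates swapped and $\a$ replaced by $\a^{-1}$. Writing $\Psi_\a^{-n} = (R_n, S_n, \a^{-n} z_2)$, one obtains $R_{n+1} = S_n$ and
\[
S_{n+1} = R_n - S_n - \a^{-(n+1)d} z_2^d S_n^q,
\]
so $\deg S_n$ obeys the same recursion with the same initial data as $d_n$, giving $\deg \Psi_\a^{-n} = d_n = \deg \Psi_\a^n$.

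The only real obstacle is verifying that no unexpected cancellation occurs at the top degree at each step; this follows automatically from $\a \neq 0$ together with the strict dominance of $\deg(z_2^d P_n^q)$ over the degrees of the other two summands, and the corresponding statement for the inverse iterates is then a formal consequence of the symmetry between $\Psi_\a$ and $\Psi_\a^{-1}$.
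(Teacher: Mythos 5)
Your proof is correct and follows essentially the same route as the paper: set up the recursion for the first component of $\Psi_\a^n$ (the paper's $P_\a^{(n)}$, your $P_n$) and observe that the nonlinear term $\a^{nd}z_2^d P_n^q$ dominates the degree at each step, giving $d_{n+1}=qd_n+d$. You are if anything a little more complete than the paper's written proof: you make the no-cancellation point explicit, and you actually carry out the inverse-iterate computation, whereas the paper's proof only addresses the forward case and leaves the equality $\deg\Psi_\a^{-n}=\deg\Psi_\a^n$ implicit.
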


\begin{proof}
Let us denote by $(P_\a^{(n)})_{n \geq -1}$ the sequence of polynomials 
where 
\[
\left\{
\begin{array}{lll}
P_\a^{(-1)}(z_0,z_1,z_2):=z_1 \\
P_\a^{(0)}(z_0,z_1,z_2):=z_0 \\ 
\forall\, n\geq 0\quad
P_\a^{(n+1)}:=P_\a^{(n)} + P_\a^{(n-1)} +(P_\a^{(n)})^q (\alpha^{n}z_2)^d.
\end{array}
\right.
\]
In particular, for every $n \geq 0$, 
$\Psi_\a^n(z_0,z_1,z_2) =\big(P_\a^{(n)}(z_0,z_1,z_2),P_\a^{(n-1)}(z_0,z_1,z_2),\a^n z_2\big)$. 
Since the degree of the third component does not change, and the 
second component is just the first one at time $n-1$, the growth 
of the degree is supported by the first component, that is 
$\mathrm{deg}(\Psi_\a^n)=\mathrm{deg}(P_\a^{(n)})$. Let us then show 
the result by induction on $n$. The result is true for $n =0$. 
If it holds for $n \geq 0$, then we have 
\[
\deg(P_\a^{(n+1)})=\deg\Big((P_\a^{(n)})^q (\alpha^{n}z_2)^d\Big)
=q\left(q^n + d \times \frac{q^n-1}{q-1}\right)+d=q^{n+1} + d 
\times \frac{q^{n+1}-1}{q-1}.
\]
\end{proof}

Since the degree is not invariant under conjugacy, 
\cite{RussakovskiiShiffman, Friedland} introduce the first dynamical 
degree. If $f$ is a polynomial automorphism of $\mathbb{C}^3$, 
the first dynamical degree of $f$ is defined by
\[
\lambda(f)=\displaystyle\lim_{n\to +\infty}(\deg f^n)^{1/n}.
\]
It satisfies the following inequalities: $1\leq\lambda(f)\leq\deg f$.

\begin{cor}
Since $q^n \leq \mathrm{deg}(\Psi_\a^n) \leq (d+1) q^n$, it follows
that 
\[
\lambda(\Psi_\alpha)=\lambda(\Psi_\alpha^{-1})=q.
\]
\end{cor}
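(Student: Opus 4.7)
The plan is to deduce this corollary directly from Lemma \ref{degpsi} by squeezing the degree sequence between two pure geometric sequences with ratio $q$.

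First I would verify the two inequalities claimed in the corollary's hypothesis. The lower bound $q^n \leq \deg(\Psi_\alpha^n)$ is immediate from the explicit formula $\deg(\Psi_\alpha^n) = q^n + d \cdot \frac{q^n-1}{q-1}$, since the extra summand is nonnegative. For the upper bound, I would rewrite $\frac{q^n-1}{q-1} = 1+q+\cdots+q^{n-1}$ and note that this sum is bounded above by $q^n$ (equivalently, $q^n(2-q) \leq 1$, which holds for $q \geq 2$), so $\deg(\Psi_\alpha^n) \leq q^n + d\cdot q^n = (d+1)q^n$.

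Next I would apply the definition of the first dynamical degree. Taking $n$-th roots in the sandwich $q^n \leq \deg(\Psi_\alpha^n) \leq (d+1)q^n$ gives
\[
q \leq (\deg \Psi_\alpha^n)^{1/n} \leq (d+1)^{1/n} \cdot q.
\]
Since $(d+1)^{1/n} \to 1$ as $n \to +\infty$, passing to the limit yields $\lambda(\Psi_\alpha) = q$.

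Finally, for the inverse, Lemma \ref{degpsi} provides the symmetry $\deg(\Psi_\alpha^{-n}) = \deg(\Psi_\alpha^n)$, so the same sandwich holds for the backward iterates and the identical argument gives $\lambda(\Psi_\alpha^{-1}) = q$. There is no genuine obstacle here: all the work was already done in Lemma \ref{degpsi}, and the corollary amounts to observing that the lower-order term $d\cdot\frac{q^n-1}{q-1}$ is negligible under the $n$-th root.
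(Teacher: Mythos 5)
Your proposal is correct and takes essentially the same approach as the paper, which simply states the sandwich $q^n \leq \deg(\Psi_\alpha^n) \leq (d+1)q^n$ (immediate from Lemma \ref{degpsi}) and lets the definition of $\lambda$ do the rest; you have just spelled out the $n$-th-root computation and the elementary bound $1+q+\cdots+q^{n-1}\leq q^n$.
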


To any polynomial automorphism $f=(f_0,f_1,f_2)$ of $\mathbb{C}^3$
of degree $d$ one can associate a birational self map of $\mathbb{P}^3_\mathbb{C}$ 
as follows
\[
(z_0:z_1:z_2:z_3)\dashrightarrow\left(z_3^df_0\left(\frac{z_0}{z_3},
\frac{z_1}{z_3},\frac{z_2}{z_3}\right):z_3^df_1\left(\frac{z_0}{z_3},
\frac{z_1}{z_3},\frac{z_2}{z_3}\right):z_3^df_2\left(\frac{z_0}{z_3},
\frac{z_1}{z_3},\frac{z_2}{z_3}\right):z_3^d\right);
\]
we still denote it by $f$.

If $g=(g_0:g_1:g_2:g_3)$ is a birational self map of $\mathbb{P}^3_\mathbb{C}$, 
the \textit{indeterminacy set} $\mathrm{Ind}(g)$ of $g$ is the set where~$g$ 
is not defined, that is 
\[
\mathrm{Ind}(g)=\big\{m\in\mathbb{P}^3_\mathbb{C}\,\vert\, g_0(m)=g_1(m)
=g_2(m)=g_3(m)=0\big\}.
\]
Remark that if we look at a birational map of $\mathbb{P}^3_\mathbb{C}$
that comes from a polynomial automorphism of $\mathbb{C}^3$ then its
indeterminacy set is contained in $\{z_3=0\}$.

A polynomial automorphism of $\mathbb{C}^3$ is \textit{algebraically stable} 
if for every $n \in \N$, 
\[
f^n\big(\{z_3=0\} \smallsetminus \mathrm{Ind}(f^n)\big) \not\subset \mathrm{Ind}(f).
\]
Let us recall the following result. 

\begin{prop}[\cite{FornaessSibony}]\label{Pro:algebraicstability}
The map $f$ is algebraically stable if and only if 
$\mathrm{deg}(f^n)=(\mathrm{deg}(f))^n$ for every $n \geq 1$.
\end{prop}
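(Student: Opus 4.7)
The plan is to track the degree drop that occurs when iterating $f$ as a rational self-map of $\P^3_\C$. Writing $f=(F_0:F_1:F_2:F_3)$ with the $F_i$ homogeneous of degree $d:=\deg f$, the composition $f\circ f^{n-1}$ is represented, before any simplification, by the tuple $\big(F_i\circ(F_0^{(n-1)},F_1^{(n-1)},F_2^{(n-1)},F_3^{(n-1)})\big)_{0\leq i\leq 3}$, whose components are homogeneous of degree $d\cdot\deg(f^{n-1})$. The true degree $\deg(f^n)$ is obtained after dividing by the greatest common divisor of these four components, so $\deg(f^n)\leq d\cdot\deg(f^{n-1})$ for every $n\geq 1$, with equality precisely when no nontrivial common factor appears. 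By induction, $\deg(f^n)=d^n$ for every $n\geq 1$ if and only if no common factor appears at any step.

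The next step is to identify geometrically when a common factor arises. An irreducible homogeneous polynomial $h$ divides all four components $F_i\circ F^{(n-1)}$ if and only if the hypersurface $\{h=0\}$ is mapped by the rational map $f^{n-1}$ into the common zero locus $\{F_0=F_1=F_2=F_3=0\}=\mathrm{Ind}(f)$. Since $f$ comes from a polynomial automorphism of $\C^3$, we have $\mathrm{Ind}(f)\subset\{z_3=0\}$, and every iterate $f^{n-1}$ restricts to a biholomorphism of $\{z_3\neq 0\}\simeq \C^3$. Therefore, if $\{h=0\}$ meets $\{z_3\neq 0\}$ along an open set, its image under $f^{n-1}$ meets $\{z_3\neq 0\}$ as well, so it cannot be contained in $\mathrm{Ind}(f)\subset\{z_3=0\}$. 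Hence the only irreducible factor that can contribute is, up to scalar, $z_3$ itself, and a common factor appears at step $n$ precisely when $f^{n-1}\big(\{z_3=0\}\setminus\mathrm{Ind}(f^{n-1})\big)\subset\mathrm{Ind}(f)$.

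Putting the two points together, $\deg(f^n)=d^n$ holds for every $n\geq 1$ if and only if $f^m\big(\{z_3=0\}\setminus\mathrm{Ind}(f^m)\big)\not\subset\mathrm{Ind}(f)$ for every $m\in\N$, which is the very definition of algebraic stability. The step I expect to be the main obstacle is the precise bookkeeping of multiplicities in the second paragraph: one has to verify that the power of $z_3$ which divides the four components is exactly the integer accounting for the observed degree defect, and not merely that some positive power appears. This is best handled by a local computation near a generic smooth point of $\{z_3=0\}$, comparing the order of vanishing of each $F_i\circ F^{(n-1)}$ along this hyperplane with the codimension of the image locus inside $\mathrm{Ind}(f)$, so that the integer drop in degree is seen to be positive exactly under the inclusion singled out by the algebraic stability condition.
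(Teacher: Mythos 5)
The paper does not prove this Proposition; it is simply quoted from \cite{FornaessSibony}, so there is no internal argument to compare yours against. Your proof is correct and is essentially the standard one for polynomial automorphisms: the degree defect at step $n$ equals the degree of the common factor $\gamma_n$ of the four homogeneous components of $f\circ f^{n-1}$, so $\deg(f^n)=(\deg f)^n$ for all $n\geq 1$ if and only if $\gamma_n$ is trivial at every step; the fact that $f^{n-1}$ is a genuine automorphism of the affine chart forces any irreducible factor of $\gamma_n$ to be $z_3$; and $z_3\mid F_i\circ F^{(n-1)}$ for all $i$ is precisely the inclusion $f^{n-1}\big(\{z_3=0\}\smallsetminus\mathrm{Ind}(f^{n-1})\big)\subset\mathrm{Ind}(f)$, whose failure for all $n$ is the definition of algebraic stability.

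The worry you raise in your final paragraph is unfounded: the equivalence requires only a yes/no test on $\gamma_n$, never its exact degree. If $\gamma_n$ is nontrivial at the first such step $n_0$, then $\deg(f^{n_0})<d\cdot\deg(f^{n_0-1})=d^{n_0}$; conversely if $\deg(f^n)=d^n$ for all $n$, then $\deg\gamma_n=d\cdot\deg(f^{n-1})-\deg(f^n)=0$ for every $n$. No local multiplicity computation is required. As an aside, the identification of $\gamma_n$ with a power of $z_3$ comes even more cheaply than by the biholomorphism argument: since $F_3=z_3^d$ and $F_3^{(n-1)}=z_3^{\deg(f^{n-1})}$, the fourth component $F_3\circ F^{(n-1)}$ is already a pure power of $z_3$, so $\gamma_n$, dividing it, must itself be a power of $z_3$.
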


Lemma \ref{degpsi} and Proposition \ref{Pro:algebraicstability} imply
that $\Psi_\alpha$ is not algebraically stable, as well as $\Psi_\a^{-1}$.
It can also be seen directly from the definition. Indeed the map
\[
\Psi_\a=\big((z_0+z_1)z_3^{q+d-1}+z_0^q z_2^d:z_0 z_3^{q+d-1}:\a z_2 z_3^{q+d-1}:z_3^{q+d}\big)
\]
sends $z_3=0$ onto $(1:0:0:0)$ and 
$\mathrm{Ind}(\Psi_\alpha)=\{z_0=0,\ z_3=0\}\cup\{z_2=0,\ z_3=0\}$. 
Similarly, we see that
\[
\mathrm{Ind}(\Psi_\a^{-1})=\{z_1=0,\ z_3=0\}\cup\{z_2=0,\ z_3=0\},
\]
and $\Psi_\a^{-1}$ sends $z_3=0$ onto 
$(0:1:0:0)\in\mathrm{Ind}(\Psi_\alpha^{-1})$.
On the other hand $\Phi_\a(\{z_0\neq 0,\ z_3=0\})=(1:0:0:0)$ does
not belong to $\mathrm{Ind}(\Phi_\a)$ and $(1:0:0:0)$ is 
a fixed point of $\Phi_\alpha$ hence 
$\Phi_\alpha^n(\{z_0\not=0,\,z_3=0\})=(1:0:0:0)$ for any $n\geq 1$. 
In particular, $\Phi_\a$ is algebraically stable.
We have for every $n \geq 0$, $\mathrm{deg}(\Phi_\a^n)=q^n$. 
Notice that for $n \geq 3$, there exist examples of maps $f \in \mathrm{Aut}(\C^3)$
which are algebraically stable but whose inverse $f^{-1}$ is not algebraically 
stable (let us mention the following example due to Guedj: 
$f=(z_0^2+\lambda z_1+az_2,\lambda^{-1}z_0^2+z_1,z_0)$ with $a$ and 
$\lambda$ in $\mathbb{C}^*$). Yet this is not the case for~$\Phi_\a$. Indeed, 
\[
\Phi_\a^{-1}(z_0:z_1:z_2:z_3)=\left(\frac{z_1z_3^{q-1}}{\a^l}:
-\frac{z_1 z_3^{q-1}}{\a^l}+\frac{z_0 z_3^{q-1}}{\a^l}-\frac{z_1^q}{\a^{lq}}:
\frac{z_2 z_3^{q-1}}{\a}:z_3^q\right)
\]
so $\Phi_\a^{-1}(\{z_1\neq 0,\ z_3=0\})=(0:1:0:0)$ does not belong to 
$\mathrm{Ind}(\Phi_\a^{-1})=\{z_1=0,z_3=0\}$ and is fixed 
by~$\Phi_\a^{-1}$. Hence $\Phi_\a^{-1}$ is also algebraically stable.
As a result one can state:

\begin{prop}
For any integer $n\geq 1$ the following equalities hold
\[
\deg (\Phi_\alpha^n)=\deg (\Phi_\alpha^{-n})=q^n,\qquad
\lambda(\Phi_\alpha)=\lambda(\Phi_\alpha^{-1})=q.
\]
\end{prop}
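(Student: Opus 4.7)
The plan is to unpack the discussion that immediately precedes the statement into a clean argument built on Proposition~\ref{Pro:algebraicstability}. First, I would read off the degrees of $\Phi_\a$ and $\Phi_\a^{-1}$ directly from the explicit formulas already written down: the homogenization
\[
\Phi_\a=\bigl(\a^l(z_0+z_1)z_3^{q-1}+\a^l z_0^q:\a^l z_0 z_3^{q-1}:\a z_2 z_3^{q-1}:z_3^q\bigr)
\]
has degree $q$, and a direct inspection of $\Phi_\a^{-1}$ (computed just above the statement) also gives degree $q$. So the content of the proposition reduces to showing that the degree is multiplicative along iterates in both time directions.

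Next, I would verify the two algebraic stability statements needed. For $\Phi_\a$, the indeterminacy set is contained in $\{z_3=0\}$, and the explicit homogeneous formula shows that $\Phi_\a$ collapses $\{z_3=0\}$ to the single point $(1:0:0:0)$. One then checks $(1:0:0:0)\notin \mathrm{Ind}(\Phi_\a)$ and that $(1:0:0:0)$ is a fixed point of $\Phi_\a$, so that for every $n\geq 1$ the image $\Phi_\a^n(\{z_3=0\}\smallsetminus \mathrm{Ind}(\Phi_\a^n))$ remains equal to $(1:0:0:0)$ and never lands in $\mathrm{Ind}(\Phi_\a)$. The same argument applies verbatim to $\Phi_\a^{-1}$: the explicit inverse shows $\mathrm{Ind}(\Phi_\a^{-1})=\{z_1=0,\,z_3=0\}$, $\Phi_\a^{-1}$ sends $\{z_3=0\}$ to the fixed point $(0:1:0:0)$, and the latter is not in $\mathrm{Ind}(\Phi_\a^{-1})$.

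With both maps algebraically stable, Proposition~\ref{Pro:algebraicstability} immediately gives $\deg(\Phi_\a^n)=(\deg \Phi_\a)^n=q^n$ and $\deg(\Phi_\a^{-n})=(\deg \Phi_\a^{-1})^n=q^n$ for every $n\geq 1$. Plugging this into the definition of the first dynamical degree yields
\[
\lambda(\Phi_\a)=\lim_{n\to +\infty}(q^n)^{1/n}=q,
\]
and similarly $\lambda(\Phi_\a^{-1})=q$, which completes the proof.

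There is essentially no obstacle here: the only step requiring a small verification is the algebraic stability of $\Phi_\a^{-1}$, because in general (as the authors recall with Guedj's example) algebraic stability is not preserved under taking inverses in dimension $\geq 3$. However, the fixed-point argument for $(0:1:0:0)$ works just as cleanly as for $(1:0:0:0)$, so the asymmetric phenomenon does not appear for $\Phi_\a$.
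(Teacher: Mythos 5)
Your argument is correct and reproduces the paper's reasoning essentially verbatim: the paper does not have a separate proof block but instead establishes algebraic stability of both $\Phi_\a$ and $\Phi_\a^{-1}$ in the paragraph preceding the proposition via the same fixed-point-at-infinity argument, then invokes Proposition~\ref{Pro:algebraicstability}. Your added remark that the subtlety lies in checking $\Phi_\a^{-1}$ separately (because algebraic stability need not be preserved by inversion in dimension $\geq 3$) is exactly the point of the Guedj example the authors mention.
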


\section{Centralizers}

If $\mathrm{G}$ is a group and $f$ an element of $\mathrm{G}$, we denote 
by $\mathrm{Cent}(f,\mathrm{G})$ the centralizer of $f$ in~$\mathrm{G}$, 
that is
\[
\mathrm{Cent}(f,\mathrm{G}):=\big\{g\in\mathrm{G}\,\vert\, fg=gf\big\}.
\]

The description of centralizers of discrete dynamical systems is an 
important problem in real and complex dynamics: Julia (\cite{Julia})
and Ritt (\cite{Ritt}) showed that the centralizer of a rational function 
$f$ of $\mathbb{P}^1$ is in general the set of iterates of $f$ (we then 
say that the centralizer of $f$ is trivial) except for
some very special~$f$. Later Smale asked if the centralizer of a 
generic diffeomorphism of a compact manifold is trivial (\cite{Smale}). 
Since then a lot of mathematicians have looked at
this question in different contexts; for instance as recalled 
in~\S\ref{sec:intro} Lamy has proved that the centralizer of a
polynomial automorphism of $\mathbb{C}^2$ of H\'enon type is in general
trivial (\cite{Lamy}).

Fix $\alpha$ with $0<|\alpha|\leq 1$. We would like to describe $\mathrm{Cent}(\Phi_\alpha,\mathrm{Aut}(\C^3))$. 
Of course it contains $\big\{\Phi_\alpha^n\,\vert\,n\in\Z\big\}$ but also 
the following one-parameter family
\[
\big\{(\eta z_0,\eta z_1,\nu z_2)\,\vert\, \nu \in \C^{*},
\eta\text{ a $(q-1)$-th root of unity}\big\}.
\]

We show that the centralizer is essentially reduced to the 
iterates of $\Phi_\a$ and such maps. Since the automorphism 
$\phi_\a=\alpha^l (z_1 + z_0 + z_0^q, z_0)$ is of Hénon 
type, it follows from a result of Lamy \cite{Lamy} that  
$\mathrm{Cent}(\phi_\alpha,\mathrm{Aut}(\C^2)) \simeq \Z \rtimes \Z_{n}$
for some $n \in \N$.\\

Let $f \in \mathrm{Cent}(\Phi_\alpha,\mathrm{Aut}(\C^3))$; we write 
$f=(f_0,f_1,f_2)$. 

\begin{lem}
We have $\frac{\partial f_2}{\partial z_0}=0$, $\frac{\partial f_2}{\partial z_1}=0$. 
Therefore, the last component $f_2$ only depends on $z_2$, and in fact it is a 
homothety: 
\[
f_2(z_0,z_1,z_2)= f_2(z_2)=\mu z_2,\quad\quad \mu \in \C^*.
\]
\end{lem}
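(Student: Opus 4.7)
The plan is to extract from the commutation relation $\Phi_\alpha \circ f = f \circ \Phi_\alpha$ the identity obtained by looking only at the third coordinate, namely
\[
f_2\bigl(\alpha^l(z_0+z_1+z_0^q),\,\alpha^l z_0,\,\alpha z_2\bigr) \;=\; \alpha\, f_2(z_0,z_1,z_2),
\]
and then to rigidify $f_2$ step by step via a degree and variable-dependence analysis. The right-hand side is purely multiplicative, so the equation should force $f_2$ to be extremely simple.

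First I would control the $z_0$-dependence by comparing degrees. Writing $f_2 = \sum c_{i,j,k}\, z_0^i z_1^j z_2^k$, the monomial $z_0^i z_1^j z_2^k$ is sent, under the substitution, to a scalar multiple of $(z_0+z_1+z_0^q)^i\, z_0^j\, z_2^k$, which has $z_0$-degree $qi+j$. Hence $\deg_{z_0}$ of the left-hand side equals $\max\{qi+j : c_{i,j,k}\neq 0\}$, while the right-hand side has $\deg_{z_0}$ equal to $\max\{i : c_{i,j,k}\neq 0\}$. Since $q\geq 2$, we have $qi+j > i$ as soon as $i\geq 1$, so the two maxima can agree only if every nonzero coefficient satisfies $i=0$. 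Therefore $\partial f_2/\partial z_0 = 0$.

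Next, knowing $f_2$ depends only on $(z_1,z_2)$, the functional equation simplifies to $f_2(\alpha^l z_0,\,\alpha z_2) = \alpha\, f_2(z_1,z_2)$. The left-hand side does not involve $z_1$ and the right-hand side does not involve $z_0$; since the two sides are equal polynomials, neither depends on $z_0$ or $z_1$, and we conclude $f_2 = f_2(z_2)$, in particular $\partial f_2/\partial z_1 = 0$. The equation further reduces to $f_2(\alpha z_2) = \alpha\, f_2(z_2)$. Expanding $f_2(z_2) = \sum_{k\geq 0} c_k z_2^k$ yields $c_k(\alpha^k - \alpha) = 0$ for every $k$; since $0 < |\alpha|\leq 1$, the equation $\alpha^{k-1}=1$ forces $k=1$ (away from roots of unity, which is the generic situation being considered here), so $c_k = 0$ for $k\neq 1$ and $f_2(z_2) = \mu z_2$.

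Finally, I would argue $\mu\neq 0$: if $\mu=0$ then $f_2\equiv 0$, contradicting the fact that $f\in\mathrm{Aut}(\C^3)$ is surjective. The main subtlety I anticipate is the case where $\alpha$ is a root of unity on the unit circle, in which $c_k(\alpha^k-\alpha)=0$ admits spurious higher-order solutions $k\equiv 1 \pmod{\mathrm{ord}(\alpha)}$; to rule them out one would have to invoke the polynomial invertibility of $f$, and this is the step I expect to require the most care.
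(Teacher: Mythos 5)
Your overall route mirrors the paper's: project the commutation relation $\Phi_\alpha\circ f=f\circ\Phi_\alpha$ onto the third coordinate, and use a degree-in-$z_0$ comparison to kill the $z_0$-dependence of $f_2$. Your treatment of the first two partials is in fact a little slicker than the paper's (they differentiate the functional equation and chase the top term in $z_0$ of the derivative; you compare degrees directly, and your observation that once $f_2=f_2(z_1,z_2)$ the equation $f_2(\alpha^l z_0,\alpha z_2)=\alpha f_2(z_1,z_2)$ has a $z_1$-free left side and a $z_0$-free right side is a clean way to get $\partial f_2/\partial z_1=0$).

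The genuine gap is in the last step, and you have already put your finger on it: from $f_2(\alpha z_2)=\alpha f_2(z_2)$ you can only conclude $c_k(\alpha^k-\alpha)=0$, which fails to isolate $k=1$ precisely when $\alpha$ is a root of unity. Since the running hypothesis is $0<|\alpha|\le 1$, the case $|\alpha|=1$ with $\alpha$ a root of unity is in scope, so this is not a spurious edge case. Announcing that one ``would have to invoke the polynomial invertibility of $f$'' does not close the gap; you must actually do it. The paper's proof sidesteps the Fourier-coefficient argument entirely: once $f_2$ is known to depend only on $z_2$, and $f\in\mathrm{Aut}(\C^3)$, the map $z_2\mapsto f_2(z_2)$ must be a polynomial bijection of $\C$ (the fibers $\{z_2=\mathrm{cst}\}$ of the preserved fibration must map bijectively onto one another), hence affine: $f_2(z_2)=\mu z_2+\nu$ with $\mu\ne 0$. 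Plugging this back into $f_2(\alpha z_2)=\alpha f_2(z_2)$ gives $\nu=\alpha\nu$, so $\nu=0$ whenever $\alpha\ne 1$. Replacing your monomial analysis with this degree bound is both shorter and uniformly valid in $\alpha$, and it is the argument the paper actually uses.
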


\begin{proof}
If we focus on the third coordinate in relation 
$\Phi_\alpha\circ f=f\circ\Phi_\alpha$, we get 
$\alpha f_2 = f_2 \circ \Phi_\a$, that is, for every $(z_0,z_1,z_2) \in\C^3$, 
\[
\alpha f_2(z_0,z_1,z_2) = f_2(\alpha^l z_0 + \alpha^l z_1 + \alpha^l z_0^q,
\alpha^l z_0,\alpha z_2).
\]
Taking the derivatives in the different coordinates, we obtain:
\begin{equation}\label{deriv}
\left\{
\begin{array}{rcl}
\a \frac{\partial f_2}{\partial z_0} & = & \a^l (1+ q z_0^{q-1}) 
\frac{\partial f_2}{\partial z_0} \circ \Phi_\a + \a^l 
\frac{\partial f_2}{\partial z_1} \circ \Phi_\a,\\
\a \frac{\partial f_2}{\partial z_1} & = & \a^l 
\frac{\partial f_2}{\partial z_0} \circ \Phi_\a,\\
\a \frac{\partial f_2}{\partial z_2} & = & \a 
\frac{\partial f_2}{\partial z_2} \circ \Phi_\a.
\end{array}
\right.
\end{equation}

Let us consider the first coordinate, and assume that 
$\frac{\partial f_2}{\partial z_0}\neq 0$; we will get a contradiction 
by looking at highest-order terms in $z_0$. Since $f_2 \in \C[z_1,z_2][z_0]$, 
we can write $f_2(z_0,z_1,z_2)=\sum\limits_{k \leq k_0} R_k(z_1,z_2) z_0^k$, 
where the $R_k$ are polynomials and $k_0$ is the degree in $z_0$ of $f_2$. 
From our hypothesis, $k_0 \geq 1$. We also look at the expansion of $R_{k_0}\neq 0$:
\[
R_{k_0}(z_1,z_2)=\sum\limits_{m \leq m_0} Q_m(z_2) z_1^m,\quad Q_{m_0} \neq 0.
\]
For the three terms, we look at the term of highest order in $z_0$:
\[
\left\{
\begin{array}{rcl}
\a \frac{\partial f_2}{\partial z_0} (z_0,z_1,z_2) & = & \a k_0 R_{k_0} (z_1,z_2) 
z_0^{k_0-1} + \dots\\
\a^l (1+ q z_0^{q-1}) \frac{\partial f_2}{\partial z_0} \circ \Phi_\a (z_0,z_1,z_2) 
& = & q k_0 \a^{l(k_0+m_0)} Q_{m_0} (\a z_2) z_0^{q k_0 + m_0 -1}+\dots\\
\a^l \frac{\partial f_2}{\partial z_1} \circ \Phi_\a(z_0,z_1,z_2)  & = & m_0 
\a^{l(k_0+m_0)} Q_{m_0} (\a z_2) z_0^{q k_0 + m_0 -1}+\dots
\end{array}
\right.
\]
Since we assume $k_0 \geq 1$, and $q >1$, we have $q k_0+m_0-1>k_0-1$ so 
the coefficient of the term in $z_0^{q k_0 + m_0 -1}$ must vanish. But this 
coefficient is $(q k_0 + m_0) \a^{l(k_0+m_0)} Q_{m_0} (\a z_2) \neq 0$, a 
contradiction.
Hence $\frac{\partial f_2}{\partial z_0}= 0$, and it follows from the second 
equation of (\ref{deriv}) that $\frac{\partial f_2}{\partial z_1}= 0$ as well. 
Therefore, $f_2=f_2(z_2)$.

Now, since $f \in \mathrm{Aut}(\C^3)$, 
we know that $f_2$ is of degree at most $1$. The map $f$ commutes 
with $\Phi_\a$, so it must preserve its fixed point $0_{\C^3}$, and we conclude that 
$f_2\colon z_2 \mapsto \mu z_2$ for some $\mu \in \C^*$. 
\end{proof} 

Recall that $\phi_\a\colon(z_0,z_1) \mapsto \alpha^l (z_1 + z_0 + z_0^q, z_0)$. 
Let us denote $\widetilde f := (f_0,f_1)$. By projecting the commutation relation
 on the first two coordinates, we get 
\begin{equation}\label{commmm}
\phi_\a \circ \widetilde f = \widetilde{f} \circ \Phi_\a.
\end{equation}

\begin{lem}
The map $\widetilde f$ only depends on the first two coordinates.
\end{lem}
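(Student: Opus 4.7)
My plan is to compare $z_2$-degrees on both sides of the two scalar equations coming from the commutation \eqref{commmm}. The key observation is an asymmetry: $\Phi_\a$ acts on the $z_2$-coordinate merely by $z_2\mapsto\a z_2$, so precomposition by $\Phi_\a$ preserves the $z_2$-degree of any polynomial in $(z_0,z_1,z_2)$; by contrast, raising $f_0$ to the power $q$ multiplies its $z_2$-degree by $q\geq 2$. This discrepancy will force $\deg_{z_2}(f_0)$ to vanish.

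Writing \eqref{commmm} coordinate by coordinate yields $\a^l(f_0+f_1+f_0^q)=f_0\circ\Phi_\a$ and $\a^l f_0=f_1\circ\Phi_\a$. Set $N_i:=\deg_{z_2}(f_i)$ for $i=0,1$. The right-hand side of the first equation has $z_2$-degree exactly $N_0$, while the $z_2$-degree of the left-hand side is at most $\max(N_0,N_1,qN_0)$.

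The first step is to check that no cancellation occurs at the top $z_2$-power on the left-hand side. If $qN_0>N_1$, the leading term comes from $f_0^q$ and its coefficient is the $q$-th power of the top $z_2$-coefficient of $f_0$, hence nonzero by integrality of $\C[z_0,z_1]$; otherwise $qN_0\leq N_1$ and the leading term comes from $f_1$, whose top coefficient is nonzero by definition of $N_1$. Equating $z_2$-degrees therefore gives $\max(qN_0,N_1)=N_0$, which forces $N_0=0$ since $q\geq 2$.

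Once $f_0$ is known to depend only on $(z_0,z_1)$, the second equation reads $f_1\circ\Phi_\a=\a^l f_0$ and its right-hand side is independent of $z_2$. Since $\deg_{z_2}(f_1\circ\Phi_\a)=N_1$, this yields $N_1=0$ as well, proving that $\widetilde f$ depends only on the first two coordinates. The only delicate point in the whole argument is the non-cancellation check at the top $z_2$-power, which however reduces immediately to the absence of zero divisors in the polynomial ring $\C[z_0,z_1]$.
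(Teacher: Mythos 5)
Your overall strategy --- comparing $z_2$-degrees and exploiting the asymmetry created by $f_0^q$, after observing that $\Phi_\alpha$ preserves $z_2$-degree --- is the same idea as the paper's. However, your non-cancellation check has a genuine gap in the boundary case $qN_0 = N_1$. You write ``otherwise $qN_0\leq N_1$ and the leading term comes from $f_1$'', but when $qN_0 = N_1$ the coefficient of $z_2^{N_1}$ on the left-hand side is $\alpha^l\bigl(a_{N_0}^q + b_{N_1}\bigr)$, where $a_{N_0}$, $b_{N_1}$ are the leading $z_2$-coefficients of $f_0$, $f_1$; nothing a priori prevents $b_{N_1}=-a_{N_0}^q$, in which case the top term cancels and the displayed conclusion $\max(qN_0,N_1)=N_0$ does not follow. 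So the first equation alone does not pin down $N_0$.

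The gap is easily closed, and in two ways. (i) Use the second equation first: $\alpha^l f_0 = f_1\circ\Phi_\alpha$ immediately gives $N_0=N_1$, so for $N_0\geq 1$ one always has $qN_0>N_1$ and your clean case applies. (ii) Do what the paper does: compose the first equation with $\Phi_\alpha$ and substitute $f_1\circ\Phi_\alpha=\alpha^l f_0$ to eliminate $f_1$, getting
\[
\alpha^l f_0\circ\Phi_\alpha + \alpha^{2l} f_0 + \alpha^l f_0^q\circ\Phi_\alpha = f_0\circ\Phi_\alpha^2,
\]
in which three of the four terms have $z_2$-degree equal to $d_0:=\deg_{z_2}f_0$ and the remaining single term $\alpha^l f_0^q\circ\Phi_\alpha$ has $z_2$-degree $qd_0$; since that term stands alone at the top degree, no cancellation question arises and $d_0=0$ follows at once. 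Either fix makes your argument correct; as written, the $qN_0=N_1$ case is unaddressed.
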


\begin{proof}
We rewrite (\ref{commmm}) as the following system:
\begin{equation}\label{systt}
\left\{
\begin{array}{rcl}
\alpha^l f_0 + \alpha^l f_1 +\alpha^l f_0^q & = & f_0 \circ \Phi_\a\\
\alpha^l f_0 & = & f_1 \circ \Phi_\a.
\end{array}
\right.
\end{equation}
We then get:
\[
\alpha^l f_0 \circ \Phi_\a + \alpha^{2l} f_0 +\alpha^l f_0^q \circ \Phi_\a 
 = f_0 \circ \Phi_\a^2.
\]
Let $d_0$ be the degree of $f_0 \in \C[z_0,z_1] [z_2]$. Since $\Phi_\a$ does not 
change the degree in $z_2$, we obtain
\[
\mathrm{deg}(\alpha^l f_0 \circ \Phi_\a)=\mathrm{deg}(\alpha^{2l} f_0)=
\mathrm{deg}(f_0 \circ \Phi_\a^2)=d_0,\quad \mathrm{deg}(\alpha^l f_0^q 
\circ \Phi_\a)=d_0^q,
\]
but $q>1$, which implies that $d_0=0$: $f_0$ does not depend on $z_2$. Using 
the second equation of (\ref{systt}), we see that $f_1$ does not depend on 
$z_2$ either. 
\end{proof}

Therefore, Equation (\ref{commmm}) can be rewritten:
\[
\phi_\a \circ \widetilde f = \widetilde{f} \circ \phi_\a.
\]
But $\phi_\a$ is a Hénon automorphism, so according to \cite{Lamy} one has:

\begin{cor}
The map $\widetilde f$ belongs to the countable set 
$\mathrm{Cent}(\phi_\alpha,\mathrm{Aut}(\C^2)) \simeq \Z \rtimes \Z_{n}$, $n \in \N$.
\end{cor}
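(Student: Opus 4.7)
The plan is simply to bundle the two preceding lemmas so that the commutation relation descends cleanly to $\mathrm{Aut}(\C^2)$, and then invoke Lamy's theorem as a black box. Combining the two lemmas, we have $f_2(z_0,z_1,z_2)=\mu z_2$ with $\mu\in\C^*$ and $\widetilde{f}=(f_0,f_1)$ independent of $z_2$, so $f$ already has the triangular form
\[
f(z_0,z_1,z_2)=\big(f_0(z_0,z_1),f_1(z_0,z_1),\mu z_2\big).
\]

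The first small step is to verify that $\widetilde{f}$ is genuinely an element of $\mathrm{Aut}(\C^2)$, not merely a polynomial self-map. Because of the triangular shape, $f^{-1}\in\mathrm{Aut}(\C^3)$ must split the same way as $\big(g_0(z_0,z_1),g_1(z_0,z_1),\mu^{-1}z_2\big)$, and the identity $f^{-1}\circ f=\mathrm{id}_{\C^3}$ forces $(g_0,g_1)\circ\widetilde{f}=\mathrm{id}_{\C^2}$, so $\widetilde{f}$ is indeed a polynomial automorphism of $\C^2$ with inverse $(g_0,g_1)$.

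Next I would project $\Phi_\a\circ f=f\circ\Phi_\a$ onto the first two coordinates. Since $\Phi_\a(z_0,z_1,z_2)=(\phi_\a(z_0,z_1),\a z_2)$ and $\widetilde{f}$ is independent of $z_2$, this projection collapses to the two-dimensional commutation $\phi_\a\circ\widetilde{f}=\widetilde{f}\circ\phi_\a$, so that $\widetilde{f}\in\mathrm{Cent}(\phi_\a,\mathrm{Aut}(\C^2))$. Since $\phi_\a\colon(z_0,z_1)\mapsto\a^l(z_1+z_0+z_0^q,z_0)$ is of H\'enon type with degree $q\geq 2$, Lamy's theorem \cite{Lamy} gives $\mathrm{Cent}(\phi_\a,\mathrm{Aut}(\C^2))\simeq\Z\rtimes\Z_n$ for some $n\in\N$, which is in particular countable. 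There is essentially no hard step here: all the analytic content has already been absorbed into the two preceding lemmas, and the only point requiring a sanity check is the verification above that the projected map lives in $\mathrm{Aut}(\C^2)$; the substantive input is the citation of Lamy, which is precisely what the whole section was set up to apply.
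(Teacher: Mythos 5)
Your proof follows the paper's approach exactly: since $\widetilde f$ is independent of $z_2$, the commutation relation projects to $\phi_\a \circ \widetilde f = \widetilde f \circ \phi_\a$, and Lamy's theorem applies. The added sanity check that $\widetilde f \in \mathrm{Aut}(\C^2)$ is a worthwhile point the paper leaves implicit, though the assertion that $f^{-1}$ must split the same way deserves a word of justification; a cleaner route is to note that $f$ maps each slice $\{z_2=c\}$ bijectively onto $\{z_2=\mu c\}$, so $\widetilde f$ is a polynomial bijection of $\C^2$ and hence lies in $\mathrm{Aut}(\C^2)$.
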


We have seen that for any 
$f=(f_0,f_1,f_2) \in \mathrm{Cent}(\Phi_\alpha,\mathrm{Aut}(\C^3))$,
$(f_0,f_1)$ depends only on $(z_0,z_1)$ and belongs to 
$\mathrm{Cent}(\phi_\alpha,\mathrm{Aut}(\C^2))$, and that 
$f_2$ depends only on $z_2$ and is a homothety. We conclude:
\begin{prop}\label{Pro:commutateur}
The centralizer of $\Phi_\alpha$ in $\mathrm{Aut}(\C^3)$ is uncountable.
More precisely
\[
\mathrm{Cent}(\Phi_\alpha,\mathrm{Aut}(\C^3)) = \mathrm{Cent}(\phi_\alpha,\mathrm{Aut}(\C^2)) \times \big\{z_2 \mapsto \mu z_2\ \vert\ \mu \in \C^*\big\} \simeq (\Z \rtimes \Z_{n}) \times \C^*,\ n\in\N.
\]
\end{prop}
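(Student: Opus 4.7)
The plan is to assemble the two preceding lemmas and the corollary into the claimed product description, then verify the reverse inclusion and read off uncountability. Let $f=(f_0,f_1,f_2)\in\mathrm{Cent}(\Phi_\alpha,\mathrm{Aut}(\C^3))$. The first lemma gives $f_2(z_0,z_1,z_2)=\mu z_2$ for some $\mu\in\C^*$; the second lemma shows that $\widetilde f=(f_0,f_1)$ depends only on $(z_0,z_1)$. Since $f$ is a bijection of $\C^3$ and its third coordinate is the isomorphism $z_2\mapsto\mu z_2$, the map $\widetilde f$ defines an element of $\mathrm{Aut}(\C^2)$. The commutation $\Phi_\alpha\circ f=f\circ\Phi_\alpha$ then reduces to $\phi_\alpha\circ\widetilde f=\widetilde f\circ\phi_\alpha$, i.e.\ $\widetilde f\in\mathrm{Cent}(\phi_\alpha,\mathrm{Aut}(\C^2))$. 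By Lamy's theorem (\cite{Lamy}), the latter is isomorphic to $\Z\rtimes\Z_n$ for some $n\in\N$. This gives the inclusion
\[
\mathrm{Cent}(\Phi_\alpha,\mathrm{Aut}(\C^3))\subset \mathrm{Cent}(\phi_\alpha,\mathrm{Aut}(\C^2))\times\big\{z_2\mapsto\mu z_2\ \vert\ \mu\in\C^*\big\}.
\]

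For the reverse inclusion, I take any pair $(\widetilde f,\,z_2\mapsto\mu z_2)$ with $\widetilde f\in\mathrm{Cent}(\phi_\alpha,\mathrm{Aut}(\C^2))$ and $\mu\in\C^*$, and form $f:=(\widetilde f(z_0,z_1),\mu z_2)\in\mathrm{Aut}(\C^3)$. A direct computation, using the fact that $\Phi_\alpha=(\phi_\alpha(z_0,z_1),\alpha z_2)$ acts coordinate-wise in the decomposition $\C^2\times\C$, shows that $f$ commutes with $\Phi_\alpha$: the first two coordinates commute because $\widetilde f$ does, and the third coordinate commutes because $\C^*$ is abelian. This yields the announced equality of sets and the product structure $(\Z\rtimes\Z_n)\times\C^*$.

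Uncountability is immediate from the $\C^*$ factor: for any nonzero $\mu\in\C^*$, the map $(z_0,z_1,z_2)\mapsto(z_0,z_1,\mu z_2)$ is a well-defined element of $\mathrm{Cent}(\Phi_\alpha,\mathrm{Aut}(\C^3))$, and there are uncountably many choices of $\mu$.

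No real obstacle remains at this stage, since the two lemmas already contain the content of the proof; the only point requiring a small check is that $\widetilde f$ is genuinely an automorphism of $\C^2$ (rather than just a polynomial endomorphism whose image lifts to an automorphism of $\C^3$), which follows from the block-triangular form forced by the lemmas together with the fact that $f$ is invertible.
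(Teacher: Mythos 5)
Your proposal is correct and follows the same route as the paper: the two lemmas (that $f_2$ is a homothety in $z_2$ and that $\widetilde f=(f_0,f_1)$ depends only on $(z_0,z_1)$) together with Lamy's description of $\mathrm{Cent}(\phi_\alpha,\mathrm{Aut}(\C^2))$ are assembled to give the product structure. You make explicit two small points the paper leaves implicit — the trivial reverse inclusion and the fact that $\widetilde f$ is genuinely an element of $\mathrm{Aut}(\C^2)$ (because $f^{-1}$ has the same block form, its first two components give a polynomial inverse of $\widetilde f$) — but the substance is identical.
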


\begin{cor}
The centralizer of $\Psi_\alpha$ in $\mathrm{Aut}(\C^3)$ is uncountable.
\end{cor}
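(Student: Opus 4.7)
The plan is to exhibit directly an uncountable family of polynomial automorphisms of $\C^3$ that commute with $\Psi_\alpha$, rather than try to transport $\mathrm{Cent}(\Phi_\alpha,\mathrm{Aut}(\C^3))$ through the conjugation $\theta$. Indeed, $\theta = (z_0 z_2^l, z_1 z_2^l, z_2)$ is only birational, and since $l = d/(q-1)$ is generally not an integer, conjugation by $\theta$ does not in general send polynomial automorphisms to polynomial automorphisms; the uncountable family $\{(\eta z_0, \eta z_1, \nu z_2)\}$ of commutators of $\Phi_\alpha$ therefore does not obviously descend to a corresponding polynomial family for $\Psi_\alpha$.

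Guided by the structure of $\mathrm{Cent}(\Phi_\alpha,\mathrm{Aut}(\C^3))$ in Proposition \ref{Pro:commutateur}, I would look for commutators of $\Psi_\alpha$ of diagonal form $f(z_0,z_1,z_2) = (Az_0, Az_1, Bz_2)$ with $A, B \in \C^*$. A direct computation of $f \circ \Psi_\alpha$ and $\Psi_\alpha \circ f$ shows that the third coordinates agree automatically (since $B \alpha = \alpha B$), the second coordinates agree trivially, and comparing the first coordinates
\[
A(z_0 + z_1 + z_0^q z_2^d) = A z_0 + A z_1 + A^q B^d z_0^q z_2^d
\]
reduces the commutation relation to the single algebraic equation $A^{q-1} B^d = 1$.

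This equation defines an algebraic subset of $(\C^*)^2$ which surjects onto $\C^*$ via the projection $(A,B)\mapsto B$: for each $B \in \C^*$ one finds $q-1 \geq 1$ values of $A \in \C^*$ with $A^{q-1} = B^{-d}$. As $\C^*$ is uncountable, the resulting family of diagonal polynomial automorphisms commuting with $\Psi_\alpha$ is itself uncountable, and the corollary follows. No serious obstacle is anticipated; the only conceptual point is the one already noted, that conjugation by $\theta$ is not available as a shortcut and a direct verification is needed.
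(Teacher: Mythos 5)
Your proof is correct: a direct computation of the commutation relation for the diagonal map $(Az_0,Az_1,Bz_2)$ with $\Psi_\alpha$ does indeed reduce to the single equation $A^{q-1}B^d=1$, and the solution set clearly contains an uncountable family of linear (hence polynomial) automorphisms.

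Your route is a genuinely direct one, whereas the paper leaves the corollary without explicit justification, implicitly relying on the conjugacy $\theta\Psi_\alpha=\Phi_\alpha\theta$ and the explicit description of $\mathrm{Cent}(\Phi_\alpha,\mathrm{Aut}(\C^3))$ in Proposition~\ref{Pro:commutateur}. That said, the caveat you raise against the transport argument is actually unfounded for this particular family: if $(\eta z_0,\eta z_1,\nu z_2)$ commutes with $\Phi_\alpha$, then
\[
\theta^{-1}\circ(\eta z_0,\eta z_1,\nu z_2)\circ\theta=(\eta\nu^{-l}z_0,\ \eta\nu^{-l}z_1,\ \nu z_2),
\]
which is linear, hence a polynomial automorphism of $\C^3$ regardless of whether $l=d/(q-1)$ is an integer (the $z_2^l$ and $z_2^{-l}$ factors cancel). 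The constraint $\eta^{q-1}=1$ becomes exactly $(\eta\nu^{-l})^{q-1}\nu^d=1$, i.e.\ your $A^{q-1}B^d=1$ after setting $A=\eta\nu^{-l}$, $B=\nu$, so the two families coincide. The birationality of $\theta$ is a real obstacle to transporting the \emph{entire} centralizer, and your direct verification has the virtue of being self-contained and of bypassing any such subtlety, but the specific uncountable diagonal subgroup survives the conjugation unscathed.
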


\section{Dynamics on the invariant hypersurface $z_2=0$}

The following holds for any $\alpha \neq 0$.
Let us recall that the Fibonacci sequence is the sequence $(F_n)_n$ defined
by: $F_0=0$, $F_1=1$ and for all $n\geq 2$
\[
F_n=F_{n-1}+F_{n-2}.
\]

The hypersurface $\{z_2=0\}$ is invariant, and when $|\a|<1$, 
it attracts every point $p \in \C^3$. On restriction to this hypersurface, 
the growth is given by the Fibonacci numbers $(F_n)_n$:
\begin{equation}\label{fibo}
\restriction{\Psi_\a^n}{z_2=0}=(F_{n+1} z_0 + F_{n} z_1, F_{n} z_0 + F_{n-1} z_1),\quad n\geq 1.
\end{equation}
Since
\[
\Psi_\a^{-1}(z_0,z_1,z_2)=\left(z_1,-z_1+z_0-z_1^q \frac{z_2^d}{\a^d},\frac{z_2}{\a}\right),
\]
similarly, we have
\begin{equation}\label{fibo2}
\restriction{\Psi_\a^{-n}}{z_2=0}\colon (z_0,z_1)\mapsto(-1)^n(F_{n-1} z_0 - F_{n} z_1, -F_{n} z_0 + F_{n+1} z_1),\quad n \geq 1.
\end{equation}
Moreover, it is easy to see that any periodic point of $\Psi_\a$ 
belongs to the hypersurface $\{z_2=0\}$. In fact, $\Psi_\a$ 
has a unique fixed point at finite distance, $0_{\C^3}=(0,0,0)$, and 
has no periodic point of period larger than $1$. Let 
$\varphi:=\frac{1+\sqrt{5}}{2}$ be the golden ratio 
and $\varphi':=-1/\varphi$. Since 
\[
F_n=\frac{\varphi^n-(\varphi')^n}{\sqrt{5}}=\frac{\varphi^n}{\sqrt{5}} 
+ o(1),
\]
we deduce from (\ref{fibo}) that any point $\left(\varphi' z,z,0\right)$ with 
$z \in \C$ converges to $0_{\C^3}$ when we iterate $\Psi_\a$, while 
any other point of the form $\left(\beta z,z,0\right)$ with $z \neq 0$ and 
$\beta \neq \varphi'$ goes to infinity. Likewise, we see from 
(\ref{fibo2}) that any point $\left(\varphi z,z,0\right)$ with $z \in \C$ 
converges to $0_{\C^3}$ when we iterate $\Psi_\a^{-1}$, while any other 
point of the form $\left(\beta z,z,0\right)$ with $z \neq 0$ and 
$\beta \neq \varphi$ goes to infinity. Furthermore, in both cases, the
 speed of the convergence is exponential since it is in 
$O(|\varphi|^{-n})$ with $|\varphi| >1$. In other terms, the linear map 
$\restriction{\Psi_\a}{z_2=0}\colon (z_0,z_1) \mapsto (z_0+z_1,z_0)$ is 
hyperbolic, with a unique fixed point $0_{\C^2}=(0,0)$ of saddle type, and 
whose stable, respectively unstable manifolds correspond to the 
following lines:
\[
W_{\restriction{\Psi_\a}{z_2=0}}^s(0_{\C^2})=\Delta_{\varphi'}:=\left\{\left(\varphi' z,z\right)\,\big|\, z\in \C\right\},\quad 
W_{\restriction{\Psi_\a}{z_2=0}}^u(0_{\C^2})=\Delta_{\varphi}:=\left\{\left(\varphi z,z\right)\,\big|\, z \in \C\right\}.
\]
Moreover, $\varphi$ and $\varphi'$ are just the eigenvalues of 
$\restriction{\Psi_\a}{z_2=0}$, and $\Delta_{\varphi}$, $\Delta_{\varphi'}$ 
the corresponding eigenspaces.

\section{Points with bounded forward orbit, description of the stable manifold $W_{\Psi_\a}^s(0_{\C^3})$}\label{Section:escapingspeed}

When $0<|\alpha|<1$, we remark that $0_{\C^3}$ is a 
hyperbolic fixed point of saddle type. The tangent space at $0_{\C^3}$ can be written as 
$T_{0_{\C^3}}(\C^3)=E_{\Psi_\a}^s(0_{\C^3}) \oplus E_{\Psi_\a}^u(0_{\C^3})$, where the 
stable, respectively unstable spaces are given by
\[
E_{\Psi_\a}^s(0_{\C^3})= \Delta_{\varphi'} \times \{0\} \oplus \{0_{\C^2}\} \times \C,\quad \quad
E_{\Psi_\a}^u(0_{\C^3})= \Delta_{\varphi} \times \{0\}.
\]

These spaces integrate to stable and unstable 
manifolds 
\[
W_{\Psi_\a}^s(0_{\C^3}):=\big\{p \in \C^3\ \vert \ \lim\limits_{n \to +\infty}\Psi_\a^n(p)=0_{\C^3}\big\},\quad
W_{\Psi_\a}^u(0_{\C^3}):=\big\{p \in \C^3\ \vert \ \lim\limits_{n \to +\infty}\Psi_\a^{-n}(p)=0_{\C^3}\big\}
\]
which are invariant by the dynamics; furthermore, $W_{\Psi_\a}^u(0_{\C^3})=\Delta_{\varphi} \times \{0\}$, 
while $W_{\Psi_\a}^s(0_{\C^3})$ is biholomorphic to $\C^2$ (see \cite{Sibony}). Note that
$\big(\Delta_{\varphi'}\times\{0\}\big)\cup\big(\{0_{\C^2}\}\times\C\big)\subset W_{\Psi_\a}^s(0_{\C^3})$, but 
it is easy to see
\footnote{Indeed if $p=(p_0,p_1,p_2)$ satisfies $p_2 \neq 0$ 
 and $p_1 = - \varphi p_0$, we see that
$P_\a^{(0)}(p) + \varphi P_\a^{(1)}(p)=p_0 (1 + \varphi - \varphi^2) + \varphi p_0^q p_2^d
 = \varphi p_0^q p_2^d \neq 0$ hence $\Delta_{\varphi'} \times \C$ is not 
 left invariant 
 by $\Psi_\a$.} that 
 $W_{\Psi_\a}^s(0_{\C^3}) \neq \Delta_{\varphi'} \times \C$.
 
\begin{figure}[H]
\begin{center}
\includegraphics [width=10.5cm]{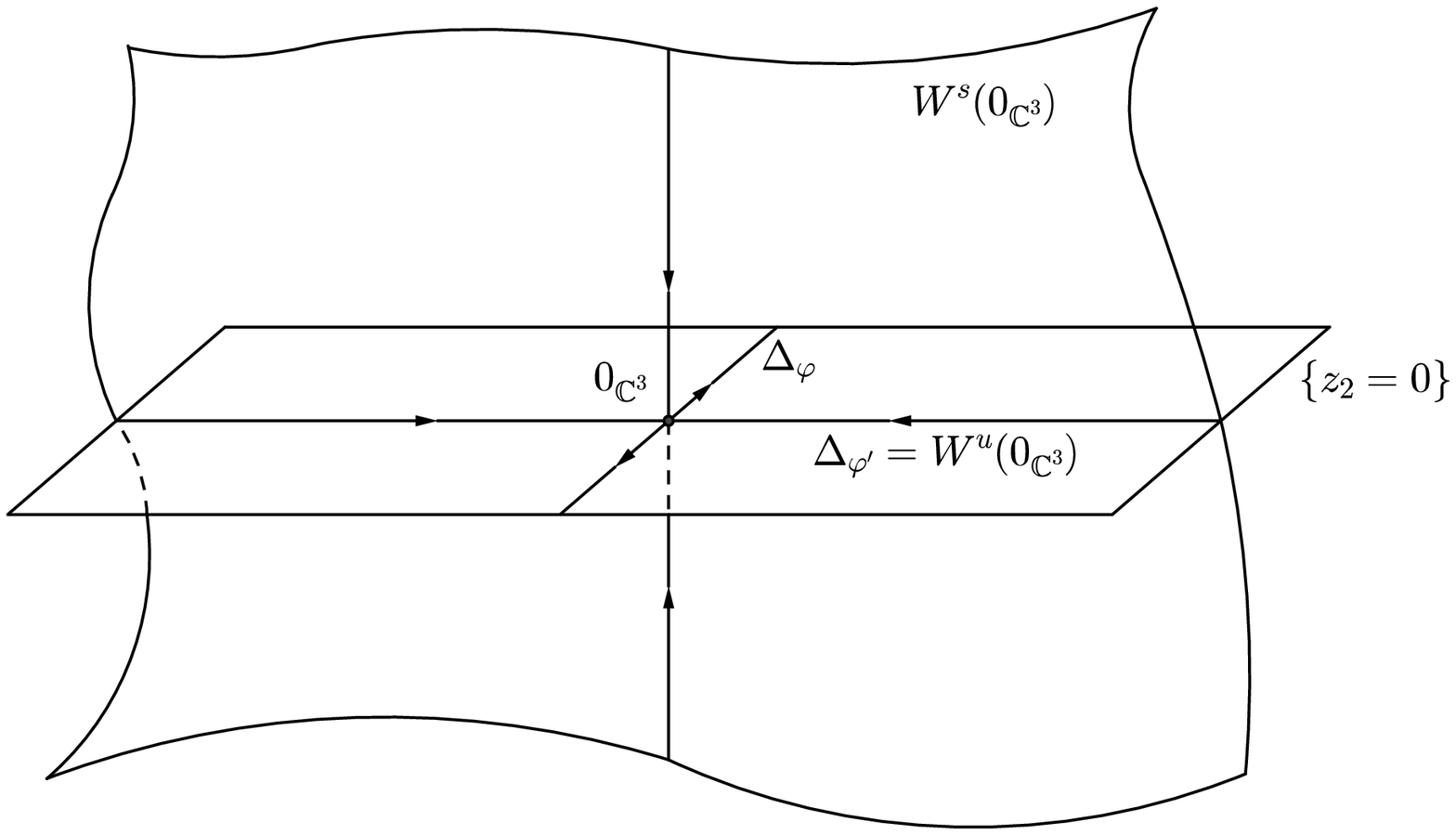}
\end{center}
\end{figure}

In the next statement, we introduce a series that encodes the growth of 
forward iterates of a point.

\begin{lem}\label{lemmarecphi}
Let $p=(p_0,p_1,p_2) \in \C^3$. For every $n \geq 0$ and any $\alpha\in \C$, 
we have
\begin{equation}
P_\a^{(n+1)}(p) + \varphi^{-1} P_\a^{(n)}(p) = \varphi^n \left(\varphi p_0+p_1+p_2^d
\sum\limits_{j=0}^{n} \left(P_\a^{(j)}(p)\right)^q \varphi^{-j} \alpha^{jd}\right).
\end{equation}
\end{lem}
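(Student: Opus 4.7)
The statement is a direct identity relating consecutive iterates $P_\a^{(n)}$, $P_\a^{(n+1)}$ to a geometric-type sum weighted by $\varphi^{-j}$, and the natural tool is induction on $n$, relying crucially on the defining identity $\varphi^2 = \varphi+1$, equivalently $1+\varphi^{-1}=\varphi$, of the golden ratio.

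\smallskip

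\textbf{Plan.} First, I would verify the base case $n=0$ by direct computation: since $P_\a^{(0)}(p)=p_0$ and $P_\a^{(1)}(p)=p_0+p_1+p_0^q p_2^d$, the left-hand side equals
\[
p_0+p_1+p_0^q p_2^d + \varphi^{-1} p_0 = (1+\varphi^{-1})p_0 + p_1 + p_0^q p_2^d = \varphi p_0 + p_1 + p_2^d (P_\a^{(0)}(p))^q,
\]
which matches the right-hand side with the sum reduced to its $j=0$ term.

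\smallskip

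\textbf{Inductive step.} Assuming the identity holds at rank $n$, I would start from the defining recurrence
\[
P_\a^{(n+2)} = P_\a^{(n+1)} + P_\a^{(n)} + (P_\a^{(n+1)})^q \alpha^{(n+1)d} z_2^d
\]
and compute, at the point $p$,
\[
P_\a^{(n+2)}(p)+\varphi^{-1} P_\a^{(n+1)}(p) = (1+\varphi^{-1})P_\a^{(n+1)}(p) + P_\a^{(n)}(p) + (P_\a^{(n+1)}(p))^q \alpha^{(n+1)d} p_2^d.
\]
Using $1+\varphi^{-1}=\varphi$, the first two terms regroup as
\[
\varphi\left(P_\a^{(n+1)}(p)+\varphi^{-1} P_\a^{(n)}(p)\right),
\]
to which the inductive hypothesis applies, producing a factor $\varphi^{n+1}$ in front of the bracket. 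The extra cubature term $(P_\a^{(n+1)}(p))^q \alpha^{(n+1)d} p_2^d$ is precisely $\varphi^{n+1}\cdot p_2^d \cdot \varphi^{-(n+1)} (P_\a^{(n+1)}(p))^q \alpha^{(n+1)d}$, so pulling out the common factor $\varphi^{n+1}$ extends the sum by exactly its $j=n+1$ term, yielding the desired identity at rank $n+1$.

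\smallskip

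There is essentially no obstacle: the only non-routine ingredient is the algebraic identity $1+\varphi^{-1}=\varphi$, which is precisely what allows the recurrence $P_\a^{(n+1)} = P_\a^{(n)}+P_\a^{(n-1)}+\ldots$ (a Fibonacci-type recursion perturbed by the polynomial term) to be diagonalized along the line of slope $\varphi$, with the perturbation collected into the geometric sum $\sum_j (P_\a^{(j)})^q \varphi^{-j} \alpha^{jd}$. The entire proof is thus a one-line inductive computation once the $\varphi$-identity is isolated.
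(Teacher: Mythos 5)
Your proof is correct and takes essentially the same route as the paper: the paper writes out all the recurrence relations for $P_\a^{(0)},\ldots,P_\a^{(n+1)}$, weights them by $\varphi^0,\varphi^1,\ldots,\varphi^n$, and sums them, using $\varphi^2-\varphi-1=0$ to make the linear terms telescope; your induction is simply the step-by-step version of that same telescoping, driven by the identical algebraic fact $1+\varphi^{-1}=\varphi$.
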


\begin{proof}
For $n \geq 0$ we have the following set of equalities:
\[
\begin{array}{lrcccccl}
& P_\a^{(n+1)}(p) &=& P_\a^{(n)}(p) & + & P_\a^{(n-1)}(p) & + & (P_\a^{(n)}(p))^q (\alpha^n p_2)^d\\
(\times\ \varphi) & P_\a^{(n)}(p) &=& P_\a^{(n-1)}(p) & + &  P_\a^{(n-2)}(p) & + & (P_\a^{(n-1)}(p))^q (\alpha^{n-1} p_2)^d\\
(\times\ \varphi^2) & P_\a^{(n-1)}(p) &=& P_\a^{(n-2)}(p) & + &  P_\a^{(n-3)}(p) & + & (P_\a^{(n-2)}(p))^q (\alpha^{n-2} p_2)^d\\
 & \vdots &=& \vdots & + &  \vdots & + & \vdots \\
 (\times\ \varphi^{n-1}) & P_\a^{(2)}(p) &=& P_\a^{(1)}(p) & + &  p_0 & + & (P_\a^{(1)}(p))^q (\alpha p_2)^d\\
 (\times\ \varphi^{n}) & P_\a^{(1)}(p) &=& p_0 & + &  p_1 & + & (P_\a^{(0)}(p))^q (p_2)^d \\
\end{array} 
\]
Summing up, and because $\varphi^2-\varphi-1=0$, we obtain
\[
P_\a^{(n+1)}(p) + \varphi^{-1}  P_\a^{(n)}(p) = \varphi^n \left(\varphi p_0 + p_1 + p_2^d \sum\limits_{j=0}^{n} 
\left(P_\a^{(j)}(p)\right)^q \varphi^{-j} \alpha^{jd}\right).
\]
\end{proof}

For every $n \geq -1$, we define the polynomial $g_n \in \C[z]=\C[z_0,z_1,z_2]$ by 
\[
g_n(z):=\varphi z_0 + z_1 + z_2^d \sum\limits_{j=0}^{n} \left(P_\a^{(j)}(z)\right)^q \varphi^{-j} \alpha^{jd}=
\big(P_\a^{(n+1)}(z) + \varphi^{-1} P_\a^{(n)}(z)\big)\varphi^{-n} .
\] 
We also introduce the power series 
\[
g(z):=\varphi z_0 + z_1 + z_2^d \sum\limits_{j=0}^{+\infty} 
\left(P_\a^{(j)}(z)\right)^q \varphi^{-j} \alpha^{jd}=\varphi z_0+z_1+\sum_{j=-1}^{+\infty}\varphi^{-(j+1)}
\big(P_{\a}^{(j+2)}(z)-P_{\a}^{(j+1)}(z)-P_{\a}^{(j)}(z)\big).
\]
Let us denote by $\mathcal{D}$ its domain of definition, that is the set 
of $p \in \C^3$ such that the series
$\sum_{j}\left(P_\a^{(j)}(p)\right)^q \varphi^{-j} \alpha^{jd}$ converges, and let 
\[
\mathcal{Z}:=\big\{p \in \mathcal{D}\ \vert \ g(p)=0\big\}
\]
be the set of its zeroes. It is easy to check 
that both $\mathcal{D}$ and $\mathcal{Z}$ are invariant by the dynamics, 
that is $\Psi_\a(\mathcal{D}) \subset \mathcal{D}$ and 
$\Psi_\a(\mathcal{Z}) \subset \mathcal{Z}$. Moreover, if $p \in \mathcal{D}$, we 
denote by 
$r_n(p):=\sum\limits_{j\geq n+1}\left(P_\a^{(j)}(p)\right)^q \varphi^{-j} \alpha^{jd}$ 
the tail of the corresponding series.

\begin{cor}\label{unbounded}
Suppose $0 < |\alpha| \leq 1$. 
Let $K^+_{\Psi_\alpha}$ denote the set of points $p = (p_0,p_1,p_2)$ whose forward orbit 
$\{\Psi_\a^n(p),\ n \geq 0\}$ is bounded. 
This is equivalent to the fact that the sequence $\big(|P_\a^{(n)}(p)|\big)_{n\geq 0}$ is bounded.
If $p \in K^+_{\Psi_\alpha}$, then for every $n \geq 0$, we have 
\begin{equation}\label{equapprochee0}
|g_n(p)| = O(\varphi^{-n}).
\end{equation}
In particular we deduce that 
\begin{equation}\label{bornezero0}
K^+_{\Psi_\alpha} \subset \mathcal{Z},\quad \text{and}\quad |r_n(p)| = O(\varphi^{-n}).
\end{equation}
\end{cor}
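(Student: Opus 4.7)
The plan is to first settle the equivalence between the boundedness of the forward orbit and that of the sequence $\bigl(|P_\a^{(n)}(p)|\bigr)_{n\geq 0}$, and then derive the decay estimate by a direct application of Lemma~\ref{lemmarecphi}. Since
\[
\Psi_\a^n(p)=\bigl(P_\a^{(n)}(p),\,P_\a^{(n-1)}(p),\,\a^n p_2\bigr)
\]
and the hypothesis $|\a|\leq 1$ keeps the third coordinate bounded by $|p_2|$, the forward orbit $\{\Psi_\a^n(p)\}_{n \geq 0}$ is bounded in $\C^3$ if and only if the scalar sequence $\bigl(P_\a^{(n)}(p)\bigr)_{n\geq 0}$ is bounded in $\C$. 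I would dispatch this equivalence first.

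For the estimate \eqref{equapprochee0}, I would plug Lemma~\ref{lemmarecphi} directly. It gives
\[
\varphi^n\, g_n(p) \,=\, P_\a^{(n+1)}(p) + \varphi^{-1} P_\a^{(n)}(p),
\]
so if $p \in K^+_{\Psi_\a}$ and $M$ denotes any upper bound for $|P_\a^{(n)}(p)|$, I immediately obtain $|g_n(p)| \leq (1+\varphi^{-1})\, M\, \varphi^{-n}$, which is exactly $|g_n(p)|=O(\varphi^{-n})$.

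Finally, to conclude \eqref{bornezero0}, I would exploit the same boundedness: combined with $|\a|\leq 1$ it yields $\bigl|\bigl(P_\a^{(j)}(p)\bigr)^q\varphi^{-j}\a^{jd}\bigr| \leq M^q \varphi^{-j}$, so the defining series of $g(p)$ converges absolutely (in particular $p\in \mathcal{D}$), and summing the geometric tail gives
\[
|r_n(p)| \,\leq\, \frac{M^q\,\varphi^{-(n+1)}}{1-\varphi^{-1}} \,=\, O(\varphi^{-n}).
\]
Since by construction of $\mathcal{D}$ one has $g_n(p) \to g(p)$, while the previous step forces $g_n(p) \to 0$, I conclude $g(p)=0$, i.e.\ $p\in \mathcal{Z}$. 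I do not expect any real obstacle: the whole statement is a book-keeping consequence of the identity in Lemma~\ref{lemmarecphi}, the uniform bound on $\bigl(P_\a^{(n)}(p)\bigr)_n$, and $|\a|\leq 1$. The only mild subtlety worth underlining is that the bound on $r_n(p)$ should be obtained from the direct geometric estimate above, since using the relation $g(p)-g_n(p)=p_2^d r_n(p)$ would only produce a bound valid for $p_2\neq 0$.
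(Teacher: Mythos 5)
Your proof is correct and follows the same essential route as the paper: apply Lemma~\ref{lemmarecphi} to get the $O(\varphi^{-n})$ bound on $g_n(p)$, then conclude $p\in\mathcal{Z}$ and bound $r_n(p)$. The one place where you improve on the paper's argument is the estimate on $r_n(p)$. The paper derives it from the identity $g_n(p)=g(p)-p_2^d\,r_n(p)=-p_2^d\,r_n(p)$, which only yields $|r_n(p)|=O(\varphi^{-n})$ after dividing by $|p_2|^d$, i.e.\ for $p_2\neq 0$; your direct geometric-tail bound
\[
|r_n(p)| \leq \frac{M^q\,\varphi^{-(n+1)}}{1-\varphi^{-1}}
\]
covers $p_2=0$ as well with no case distinction. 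You also make explicit the absolute convergence that justifies $p\in\mathcal{D}$ before concluding $g(p)=\lim g_n(p)=0$, a step the paper leaves implicit. Both are small clean-ups rather than a different method, but they are genuine refinements worth keeping.
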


\begin{proof}
It follows immediately from Lemma \ref{lemmarecphi}. 
Indeed under our assumptions we have:
\[
|g_n(p)|\leq (|P_\a^{(n+1)}(p)| + \varphi^{-1} |P_\a^{(n)}(p)|)\varphi^{-n} = O(\varphi^{-n}).
\]
This implies $p \in \mathcal{Z}$. Then we also have $g_n(p)=g(p)-p_2^d r_n(p)=-p_2^d r_n(p)$ and $|r_n(p)| = O(\varphi^{-n})$.
\end{proof}

\begin{rem}
We can see (\ref{bornezero0}) as a codimension one condition 
that points with bounded forward orbit have to satisfy. 
Also, we see from (\ref{equapprochee0}) that locally, such points are close 
to the analytic manifold $\mathcal{Z}_n:=\big\{p \in \C^3\ \vert \ g_n(p)=0\big\}$ 
for $n \geq 0$ big. 
If $p=(p_0,p_1,0)$, we recover from (\ref{bornezero0})  
that $p$ has bounded forward orbit if and only if it belongs to the stable manifold
$W_{\Psi_\a}^s(0_{\C^3}) \cap \{z_2=0\} = \Delta_{\varphi'} \times \{0\}$.
\end{rem}

When $0<|\alpha|<1$, we have the following analytic characterization of the stable manifold $W_{\Psi_\a}^s(0_{\C^3})$.

\begin{prop}\label{stablemanifold}
Assume $0<|\alpha|<1$. 
The point $p=(p_0,p_1,p_2) \in \C^3$ belongs to the stable manifold
 $W_{\Psi_\a}^s(0_{\C^3})$ if and only if the following properties hold:
\smallskip
\begin{itemize}
\item $p \in \mathcal{Z}$;
\smallskip
\item the series $\displaystyle\sum_j |r_j(p)| \varphi^j$ is convergent.
\end{itemize}
Equivalently, $p\in W_{\Psi_\a}^s(0_{\C^3})$ if and only if $\sum_j |g_j(p)| \varphi^j$ converges.
\end{prop}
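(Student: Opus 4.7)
The plan is to analyze the recurrence satisfied by $a_n := P_\a^{(n)}(p)$. Lemma \ref{lemmarecphi} may be rewritten as the first-order scalar recurrence
\[
a_{n+1} = \varphi' a_n + \varphi^n g_n(p),\qquad \varphi':=-\varphi^{-1},
\]
which solves explicitly as
\[
a_n = (\varphi')^n a_0 + \sum_{j=0}^{n-1} (\varphi')^{n-1-j} \varphi^j g_j(p).
\]
Since $|(\varphi')^{n-1-j} \varphi^j| = \varphi^{2j-n+1} = \varphi \cdot \varphi^{j-n}\cdot\varphi^j$, the asymptotic behavior of $a_n$ is naturally controlled by the weighted tail $\sum_j \varphi^j |g_j(p)|$.

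For the implication ``$\sum_j |g_j(p)|\varphi^j < \infty \Rightarrow p \in W^s_{\Psi_\a}(0_{\C^3})$'', I would bound $|a_n|$ by splitting the sum at $m := \lfloor n/2 \rfloor$. For the early range $j \leq m$, the factor $\varphi^{j-n}$ is bounded by $\varphi^{-n/2}$, so this contribution is at most $\varphi^{1-n/2}\sum_j \varphi^j |g_j(p)|$, which tends to $0$. For the late range $m < j < n$, bounding $\varphi^{j-n} \leq 1$ gives a contribution at most $\varphi$ times the tail $\sum_{j > n/2}\varphi^j |g_j(p)|$, again tending to $0$. Together with $(\varphi')^n a_0 \to 0$ and $\alpha^n p_2 \to 0$ (since $|\alpha|<1$), this yields $\Psi_\a^n(p) = (a_n, a_{n-1}, \alpha^n p_2) \to 0_{\C^3}$, hence $p \in W^s_{\Psi_\a}(0_{\C^3})$.

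The converse implication follows from the stable manifold theorem: $0_{\C^3}$ is a hyperbolic saddle whose stable eigenvalues are $\varphi'$ and $\alpha$, both of modulus less than $1$, so any $p \in W^s_{\Psi_\a}(0_{\C^3})$ satisfies $|a_n| \leq C \rho^n$ for some $\rho \in (\max(|\varphi'|,|\alpha|),1)$. Then $|g_n(p)| \varphi^n = |a_{n+1} + \varphi^{-1} a_n| \leq 2C \rho^n$, and geometric summability of $\sum_j |g_j(p)|\varphi^j$ is immediate.

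It remains to match this with the first characterization via $r_n$. The identity $g_n(p) = g(p) - p_2^d r_n(p)$ makes the equivalence of the two summability conditions transparent once $p \in \mathcal{Z}$; conversely, $\sum_j |g_j(p)|\varphi^j < \infty$ forces $g_n(p) \to 0$, and a telescoping argument on $g_{n+1}(p)-g_n(p) = p_2^d (a_{n+1})^q \varphi^{-(n+1)}\alpha^{(n+1)d}$ (trivially handled when $p_2 = 0$) shows that $p$ belongs to the domain $\mathcal{D}$ with $g(p) = \lim g_n(p) = 0$, so $p \in \mathcal{Z}$. The main technical point is the splitting argument in the ``if'' direction, which is precisely what extracts convergence to $0_{\C^3}$ from mere summability of $\sum |g_j(p)|\varphi^j$---a strictly stronger hypothesis than the $|r_n(p)| = O(\varphi^{-n})$ condition of Corollary \ref{unbounded} that only characterized boundedness of the forward orbit.
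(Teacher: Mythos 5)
Your proof is correct and follows essentially the same route as the paper: both rewrite Lemma~\ref{lemmarecphi} as a first-order linear recurrence, solve it explicitly, and use the same splitting at $\lfloor n/2\rfloor$ to extract $P_\alpha^{(n)}(p)\to 0$ from summability of $\sum_j \varphi^j|g_j(p)|$ (equivalently $\sum_j\varphi^j|r_j(p)|$, using $g_n(p)=-p_2^d r_n(p)$ on $\mathcal{Z}$). The only cosmetic difference is in the easy direction: you invoke the quantitative estimate of the stable manifold theorem to get $|a_n|\leq C\rho^n$ and hence $|g_n(p)|\varphi^n\leq 2C\rho^n$, whereas the paper avoids it by noting that boundedness of the forward orbit alone gives $|r_n(p)|=O\big(\sum_{j\geq n+1}\varphi^{-j}|\alpha|^{jd}\big)=O\big(\varphi^{-n}|\alpha|^{nd}\big)$, which already yields the required geometric decay of $|r_n(p)|\varphi^n$.
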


\begin{proof}
If $p$ belongs to the stable manifold $W_{\Psi_\a}^s(0_{\C^3})$, then its forward
orbit is bounded and Corollary \ref{unbounded} tells us that 
$p \in \mathcal{Z}$.
Moreover we have 
\[
|r_n(p)|= O\left(\sum\limits_{j\geq n+1} \varphi^{-j} \alpha^{jd}\right)=O(\varphi^{-n} |\alpha|^{nd}),
\]
hence $|r_n(p)| \varphi^n = O(|\a|^{nd})$ and the series $\sum_j |r_j(p)| \varphi^j$ converges.

For the other implication, we get from Lemma \ref{lemmarecphi} that for every $j \geq 0$,
\begin{equation}\label{eqstab}
P_\a^{(j+1)}(p) + \varphi^{-1} P_\a^{(j)}(p) = - p_2^d \varphi^j r_j(p).
\end{equation}
Now let $n \geq 0$. Write equations (\ref{eqstab}) for $j = 0, \dots,n$ and 
combine them to obtain 
\[
P_\a^{(n+1)}(p)=\frac{(-1)^{n+1}}{\varphi^{n+1}} p_0+ p_2^d
\sum\limits_{j=0}^{n} (-1)^{n+j+1} r_j(p) \varphi^j \varphi^{j-n}.
\]
The first term of the right hand side goes to $0$ with $n$; we split the sum 
as follows:
\begin{align*}
\sum\limits_{j=0}^{n} (-1)^{n+j+1} r_j(p) \varphi^j \varphi^{j-n}&=
\sum\limits_{j=0}^{\lfloor n/2 \rfloor} (-1)^{n+j+1} r_j(p) \varphi^j \varphi^{j-n}\\
&+ \sum\limits_{j=\lfloor n/2 \rfloor+1}^{n} (-1)^{n+j+1} r_j(p) \varphi^j \varphi^{j-n}.
\end{align*}
We get
\[
\left|\sum\limits_{j=0}^{\lfloor n/2 \rfloor} (-1)^{n+j+1} r_j(p) \varphi^j \varphi^{j-n}\right|
\leq \varphi^{\lfloor n/2 \rfloor-n} \sum\limits_{j=0}^{+\infty} |r_j(p)| \varphi^j 
\]
hence it goes to $0$ with respect to $n$. For the remaining term, we estimate
\[
\left|\sum\limits_{j=\lfloor n/2 \rfloor+1}^{n} (-1)^{n+j+1} r_j(p) \varphi^j \varphi^{j-n}\right|
\leq \sum\limits_{j=\lfloor n/2 \rfloor+1}^{+\infty} |r_j(p)| \varphi^j,
\]
which goes to $0$ as well. We conclude that $\lim\limits_{n \to +\infty} P_\a^{(n)}(p) = 0$, 
hence $p \in W_{\Psi_\a}^s(0_{\C^3})$. 

The other equivalence follows from the fact that for $p \in \mathcal{Z}$, we have $g_n(p)=-p_2^d r_n(p)$. 
\end{proof}

\begin{cor}\label{corvarstable}
Assume $0<|\alpha|<1$. Then 
the forward orbit of a point $p=(p_0,p_1,p_2)$ is bounded if and only if $p$ belongs to
 the stable manifold $W_{\Psi_\a}^s(0_{\C^3})$; in other terms, $K^+_{\Psi_\alpha}=W_{\Psi_\a}^s(0_{\C^3})$.
\end{cor}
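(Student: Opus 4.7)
The plan is to obtain the corollary as an almost immediate consequence of Proposition \ref{stablemanifold} combined with Corollary \ref{unbounded}, using the decay provided by the assumption $|\alpha|<1$.

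The inclusion $W_{\Psi_\a}^s(0_{\C^3}) \subset K^+_{\Psi_\a}$ is tautological: any $p$ whose forward orbit converges to $0_{\C^3}$ has bounded forward orbit. So the real content is the reverse inclusion.

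For the reverse inclusion, I start with $p=(p_0,p_1,p_2)\in K^+_{\Psi_\a}$, so that $M:=\sup_{n\geq 0}|P_\a^{(n)}(p)|<+\infty$. Corollary \ref{unbounded} already yields $p\in\mathcal{Z}$, so by Proposition \ref{stablemanifold} it suffices to check that the series $\sum_j |r_j(p)|\varphi^j$ converges. This is where the strict inequality $|\alpha|<1$ will be used (Corollary \ref{unbounded} alone only yielded $|r_n(p)|\varphi^n=O(1)$, which is not enough). Using boundedness of $|P_\a^{(j)}(p)|$ by $M$, I estimate the tail directly by
\[
|r_n(p)|\leq \sum_{j\geq n+1}|P_\a^{(j)}(p)|^q\,\varphi^{-j}|\alpha|^{jd}\leq M^q\sum_{j\geq n+1}\bigl(\varphi^{-1}|\alpha|^d\bigr)^j.
\]
Since $\varphi>1$ and $|\alpha|<1$, the ratio $\varphi^{-1}|\alpha|^d<1$, and the geometric series gives $|r_n(p)|=O\!\bigl((\varphi^{-1}|\alpha|^d)^{n+1}\bigr)$, hence $|r_n(p)|\varphi^n=O(|\alpha|^{nd})$. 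Summing over $n$, the series $\sum_j |r_j(p)|\varphi^j$ converges (as a geometric series of ratio $|\alpha|^d<1$), and Proposition \ref{stablemanifold} yields $p\in W_{\Psi_\a}^s(0_{\C^3})$.

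There is no real obstacle: the estimate is essentially the same one that appears at the beginning of the proof of Proposition \ref{stablemanifold}, and the whole argument is just observing that the bound on the orbit is preserved under the geometric weights when $|\alpha|<1$. The only point worth stressing in the write-up is that the strict inequality $|\alpha|<1$ is crucial, since for $|\alpha|=1$ one only recovers $|r_n(p)|\varphi^n=O(1)$ and the characterization of $K^+_{\Psi_\a}$ as $W_{\Psi_\a}^s(0_{\C^3})$ will no longer hold — which is consistent with the dichotomy announced for the case $|\alpha|=1$ in Theorem F.
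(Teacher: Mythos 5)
Your proof is correct and follows essentially the same route as the paper's: reduce to Proposition \ref{stablemanifold} via Corollary \ref{unbounded}, then use boundedness of $(|P_\a^{(n)}(p)|)_n$ to get $|r_n(p)|=O(\varphi^{-n}|\alpha|^{nd})$, so that $\sum_n |r_n(p)|\varphi^n$ converges geometrically precisely because $|\alpha|<1$. Your write-up is a bit more explicit than the paper's one-line estimate, but the argument is the same.
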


\begin{proof}
Let $p \in K^+_{\Psi_\alpha}$. We have already seen in Corollary \ref{unbounded} that 
$p \in \mathcal{Z}$. Moreover,  if we denote 
$r_n(p):=\sum\limits_{j\geq n+1}(P_\a^{(j)}(p))^q \varphi^{-j} \alpha^{jd}$, then
$\displaystyle\sum_j |r_j(p)| \varphi^j$ is convergent since 
$|r_j(p)|=O(\varphi^{-j} |\alpha|^{jd})$. Then, Proposition \ref{stablemanifold} tells us 
that $p \in W_{\Psi_\a}^s(0_{\C^3})$. The other implication is straightforward. 
\end{proof}

\begin{lem}\label{lemalpha1}
Assume now $|\alpha|=1$. We have seen that $p \in K_{\Psi_\a}^+$
 implies that $p \in \mathcal{Z}$ 
and $|r_n(p)| = O(\varphi^{-n})$. Conversely, if $p \in \mathcal{Z}$ 
and $\sum_j |r_j(p)|\varphi^{j}$ is convergent, then $p \in K_{\Psi_\a}^+$.
\end{lem}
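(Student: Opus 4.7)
The plan is to adapt the proof of Proposition \ref{stablemanifold}, observing that the converse implication there never truly exploited $|\alpha|<1$: the decay hypothesis $|\alpha|<1$ was only used to automatically deduce convergence of $\sum_j|r_j(p)|\varphi^j$ from boundedness of the orbit. Once this convergence is imposed as an explicit assumption (which is exactly the hypothesis of the present lemma), the rest of the argument carries over verbatim. Since the forward direction of Lemma \ref{lemalpha1} is Corollary \ref{unbounded}, I only need to handle the converse: if $p=(p_0,p_1,p_2) \in \mathcal{Z}$ and $\sum_j |r_j(p)|\varphi^j$ converges, then the sequence $(P_\a^{(n)}(p))_n$ is bounded.

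First, from the assumption $g(p)=0$, I would apply Lemma \ref{lemmarecphi} to obtain the first-order recurrence
\[
P_\a^{(j+1)}(p) + \varphi^{-1} P_\a^{(j)}(p) = -p_2^d \varphi^j r_j(p), \qquad j \geq 0,
\]
which is equation (\ref{eqstab}). Solving this recurrence by iteration gives the same closed form as in Proposition \ref{stablemanifold}:
\[
P_\a^{(n+1)}(p) = \frac{(-1)^{n+1}}{\varphi^{n+1}}p_0 + p_2^d \sum_{j=0}^{n} (-1)^{n+j+1} r_j(p)\,\varphi^j \varphi^{j-n}.
\]

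Next, I would split the sum at $\lfloor n/2 \rfloor$, exactly as in that earlier proof. For $j \leq \lfloor n/2 \rfloor$, one has $\varphi^{j-n} \leq \varphi^{\lfloor n/2 \rfloor - n}$, so the head is bounded by $\varphi^{\lfloor n/2\rfloor - n}\sum_{j\geq 0}|r_j(p)|\varphi^j$, which tends to $0$. For $\lfloor n/2 \rfloor + 1 \leq j \leq n$, one has $\varphi^{j-n} \leq 1$, so the tail is bounded by $\sum_{j\geq \lfloor n/2\rfloor +1}|r_j(p)|\varphi^j$, the tail of a convergent series, which also tends to $0$. Since the leading $p_0$-term likewise vanishes, I conclude that $P_\a^{(n)}(p) \to 0$. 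Because the third coordinate of $\Psi_\a^n(p)$ is $\alpha^n p_2$ with $|\alpha^n p_2| = |p_2|$, the whole forward orbit of $p$ is bounded, i.e.\ $p \in K_{\Psi_\a}^+$.

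There is essentially no obstacle here: the only point worth highlighting is the boundedness of the third coordinate, which is automatic in the regime $|\alpha|=1$ (in contrast to Proposition \ref{stablemanifold}, where $|\alpha|<1$ forced an additional decay $\alpha^n p_2 \to 0$ and yielded convergence to $0_{\C^3}$ rather than merely to the invariant circle $\{0_{\C^2}\}\times\{p_2 e^{ix}\,|\,x\in\mathbb{R}\}$). The core linear-recurrence estimate is identical in both settings.
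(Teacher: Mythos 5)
Your proof is correct and begins from the same closed-form expression for $P_\a^{(n+1)}(p)$ obtained by iterating the recurrence $P_\a^{(j+1)}(p)+\varphi^{-1}P_\a^{(j)}(p)=-p_2^d\varphi^j r_j(p)$ that follows from $g(p)=0$. The difference lies in the final estimate: you carry over the $\lfloor n/2\rfloor$-splitting argument from Proposition~\ref{stablemanifold} to conclude that $P_\a^{(n)}(p)\to 0$, whereas the paper takes a shortcut, simply observing that each factor $\varphi^{j-(n-1)}$ with $0\leq j\leq n-1$ is $\leq 1$ and hence $|P_\a^{(n)}(p)|\leq |p_0|+|p_2|^d\sum_{j\geq 0}|r_j(p)|\varphi^j=O(1)$. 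Your version thus proves the strictly stronger statement that the first two coordinates tend to $0_{\C^2}$, at the cost of redoing the two-piece estimate; the paper's cruder bound suffices because membership in $K_{\Psi_\a}^+$ only requires boundedness, not decay. Your closing remark that $|\alpha^n p_2|=|p_2|$ keeps the third coordinate bounded is correct and is indeed the reason the conclusion is boundedness rather than convergence to $0_{\C^3}$ in the $|\alpha|=1$ regime.
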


\begin{proof}
Assume that $p \in \mathcal{Z}$ and that $\displaystyle\sum_j |r_j(p)|\varphi^{j}$ 
converges. As previously, we have for every $n \geq 0$:
\[
P_\a^{(n)}(p)=\frac{(-1)^{n}}{\varphi^{n}} p_0+ p_2^d\sum\limits_{j=0}^{n-1} (-1)^{n+j} r_j(p) \varphi^j \varphi^{j-(n-1)}.
\]
From our assumption we deduce that 
$|P_\a^{(n)}(p)|\leq |p_0|+|p_2|^d \sum\limits_{j=0}^{+\infty} |r_j(p)| \varphi^j$, 
hence $|P_\a^{(n)}(p)|=O(1)$.
\end{proof}

\section{Birational conjugacy, dynamical properties of automorphisms of Hénon type}\label{sec:lost}

Assume $0<|\alpha|\leq 1$. 
We recall here some facts concerning the dynamics of the H\'enon automorphism 
$\phi_\a$, and so on the dynamics of $\Phi_\alpha=(\phi_\alpha,\alpha z_2)$. Denote by $F^+_{\phi_\alpha}$ 
the largest open set on which $(\phi_\alpha^n)_n$ is locally equicontinuous, 
by $K^+_{\phi_\alpha}$ the set of points $p \in \C^2$ such that $(\phi_\alpha^n(p))_{n \geq 0}$ is 
bounded and by $J^+_{\phi_\alpha}$ its topological boundary. The automorphism $\phi_\a$ is \textit{regular}, 
that is, $\mathrm{Ind}(\phi_\a)\cap \mathrm{Ind}(\phi_\a^{-1})= \emptyset$. In particular, it is 
algebraically stable, hence the following limit 
exists and defines a Green function:
\[
G^+_{\phi_\a}(z_0,z_1):=\lim_{n\to +\infty}\frac{\log^+\vert\vert\phi_\a^n(z_0,z_1)\vert\vert}{q^n}.
\]
It satisfies the invariance property 
$G^+_{\phi_\a}\circ\phi_\alpha=q \cdot G^+_{\phi_\a}$. We define the associate current $T^+_{\phi_\a}=\mathrm{dd}^cG^+_{\phi_\a}$, where 
$\mathrm{d}^c=\frac{\mathbf{i}(\overline{\partial}-\partial)}{2\pi}$. Of course there
are similar objects $F^-_{\phi_\alpha}$, $K^-_{\phi_\alpha}$, $J^-_{\phi_\alpha}$,
$G^-_{\phi_\a}$ and $T^-_{\phi_\a}$ associated to the 
inverse map $\phi_\alpha^{-1}$;
we also set $K_{\phi_\alpha}:=K^+_{\phi_\alpha}\cap K^-_{\phi_\alpha}$.
One inherits a probability measure 
$\mu_{\phi_\a}=T^+_{\phi_\a}\wedge T^-_{\phi_\a}$ which is invariant by $\phi_\a$. 
Set $\phi:=\phi_1$. If $|\alpha|>1$, then 
$\phi_{\alpha}=\alpha^l \phi = ((\alpha^{-1} )^{l}\phi^{-1})^{-1}$, 
where $\alpha^{-1}<1$ and $\phi^{-1}$ is of the same 
form as $\phi$, so that we can restrict ourselves to the case where $|\alpha|\leq 1$.   Besides, according to
\cite{BedfordSmillie:JAMS, FornaessSibony, BedfordSmillie, 
BedfordLyubichSmillie:1,BedfordLyubichSmillie:2} the following properties hold: for $0<\a\leq 1$,
\smallskip
\begin{itemize}
\item the function $G^+_{\phi_\a}$ is H\"older continuous;
\smallskip
\item we have the following characterization of points with bounded orbit:
\begin{equation}\label{bddorbits}
K^\pm_{\phi_\a}=\big\{p \in \C^2 \,\vert\,G^\pm_{\phi_\a}(p)=0\big\};
\end{equation}
this tells us that points either have bounded foward orbit, or escape to infinity 
with maximal speed;
\item let $p$ be a saddle point of $\phi_\a$, then $J^+_{\phi_\a}$ is 
the closure of the stable manifold $W_{\phi_\a}^s(p)$;
\smallskip

\item the support of $T_{\phi_\a}^+$ coincides with the boundary of
$K^+_{\phi_\a}$, that is $J^+_{\phi_\a}$;
\smallskip

\item the current $T^+_{\phi_\a}$ is extremal among positive closed
currents in $\mathbb{C}^2$ and is -- up to a multiplicative constant -- 
the unique positive closed current supported on $K^+_{\phi_\a}$;
\smallskip 

\item the measure $\mu_{\phi_\a}$ has support in the compact set $\partial K_{\phi_\a}$,
is mixing, maximises entropy and is well approximated by Dirac 
masses at saddle points.
\end{itemize}
\smallskip

One introduces analogous objects for the automorphism $\Phi_\a$.
In particular, $\Phi_\a$ is algebraically stable so we can also define the Green function 
\[
G_{\Phi_\a}^+:=\displaystyle\lim_{n\to +\infty}\frac{\log^+\vert\vert\Phi_\alpha^n
\vert\vert}{q^n}.
\]
In fact, for any $(z_0,z_1,z_2) \in \C^3$, 
$G_{\Phi_\a}^+(z_0,z_1,z_2)=G_{\phi_\a}^+(z_0,z_1)$ 
because if $z_2 \neq 0$,
$\lim\limits_{n\to +\infty} \frac{\log|\alpha^n z_2|}{q^n} = 0$. 
We introduce the holomorphic map 
\[
h\colon \C^3 \to \C^2,\qquad  h\colon(z_0,z_1,z_2) \mapsto (z_0 z_2^l,z_1 z_2^l).
\] 
It follows from the previous remark that $G_{\Phi_\a}^+\circ \theta=G_{\phi_\a}^+ \circ h$
where $\theta=(z_0z_2^l,z_1z_2^l,z_2)$ conjugates $\Phi_\alpha$ to $\Psi_\alpha$ (\emph{i.e.}
$\theta\Psi_\alpha=\Phi_\alpha\theta$). Moreover, for $0<|\alpha|<1$, one gets that 
\[
K^+_{\Phi_\alpha}=K^+_{\phi_\alpha}\times\mathbb{C}, \qquad\qquad 
K^-_{\Phi_\alpha}=K^-_{\phi_\alpha}\times\{0\},\qquad\qquad  K_{\Phi_\alpha}=K_{\phi_\alpha}\times\{0\},
\]
while for $|\alpha|=1$, 
\[
K^+_{\Phi_\alpha}=K^+_{\phi_\alpha}\times\mathbb{C}, \qquad\qquad 
K^-_{\Phi_\alpha}=K^-_{\phi_\alpha}\times\{\C\},\qquad\qquad  K_{\Phi_\alpha}=K_{\phi_\alpha}\times\{\C\}.
\]

\smallskip

When $f$ is an algebraically stable polynomial automorphism of 
$\mathbb{C}^3$ one can inductively define the analytic sets $X_j(f)$ by 
\[
\left\{
\begin{array}{ll}
X_1(f)=\overline{f\big(\{z_3=0\}\smallsetminus\mathrm{Ind}(f)\big)} \\
X_{j+1}(f)=\overline{f\big(X_j(f)\smallsetminus\mathrm{Ind}(f)\big)}\quad\forall\,j\geq 1
\end{array}
\right.
\]
The sequence $(X_j(f))_j$ is decreasing, $X_j(f)$ is non-empty since $f$ is 
algebraically stable, so it is stationary. Denote by $X(f)$ the corresponding limit 
set. 
An algebraically stable polynomial automorphism $f$ of $\mathbb{C}^3$ is 
\textit{weakly regular} if $X(f)\cap\mathrm{Ind}(f)=\emptyset$. For instance
elements of $\mathcal{H}$ are weakly regular. A weakly regular automorphism is 
algebraically stable. Moreover $X(f)$ is an attracting set for $f$; in other words 
there exists an open neighborhood $\mathcal{V}$ of $X(f)$ such that 
$f(\mathcal{V})\Subset\mathcal{V}$ and 
$\displaystyle\bigcap_{j=1}^{+\infty} f^j(\mathcal{V})=X(f)$. 

Both $\Phi_\alpha$ and $\Phi_\alpha^{-1}$ are weakly regular; furthermore  
$X(\Phi_\a)=(1:0:0:0)$ and $X(\Phi_\a^{-1})=(0:1:0:0)$. Therefore 
$(1:0:0:0)$ (resp. $(0:1:0:0)$) is an attracting point for $\Phi_\alpha$ (resp. 
$\Phi_\alpha^{-1}$). From \cite[Corollary 1.8]{GuedjSibony} the basin of attraction 
of $(1:0:0:0)$ is biholomorphic to $\mathbb{C}^3$. 

As recalled above, 
the automorphisms $\phi_\a^\pm$ are regular, hence weakly regular, and similarly to $\Phi_\a^\pm$, 
they possess attractors 
$X(\phi_\a^+)=(1:0:0)$ and $X(\phi_\a^-)=(0:1:0)$ whose basins are biholomorphic to $\C^2$.

\medskip

We do not inherit these properties for $\Psi_\alpha$. Indeed  
remark that $K_{\Phi_\alpha}$, $X(\Phi_\a)$ and $X(\Phi_\a^{-1})$ 
are contained in $\{z_2=0\}$; but $\{z_2=0\}$ is 
contracted by $\theta^{-1}$ (recall that $\theta\Psi_\alpha=\Phi_\alpha\theta$)
onto $\{z_2=z_3=0\}$ and $\{z_2=z_3=0\}=\mathrm{Ind}(\Psi_\alpha)$.

\section{Definition of a Green function for $\Psi_\a$}

In this part, we assume $0<|\a|\leq 1$. As for $\phi_\a$ and $\Phi_\a$, and 
despite the fact that $\Psi_\a$ is not algebraically stable, we will see that it 
is possible to define a Green function for the automorphism $\Psi_\a$ which has 
almost as good properties. In particular, we will see that this function
carries a lot of information about the dynamics of the automorphism $\Psi_\a$. 

Let $p=(p_0,p_1,p_2)\in \C^3$, and define 
$C=C(p_2):=3 \max(1,|p_2|^d)>0$. 
We remark that for $n \geq 0$, we have $|\alpha^n p_2|^d \leq C$, hence
$\max(\|\Psi_\a^{n+1}(p)\|,1)\leq C \max(\|\Psi_\a^{n}(p)\|,1)^q$. We deduce that for every $n \geq 0$,
\[
\left|\frac{\log^+\|\Psi_\a^{n+1}(p)\|}{q^{n+1}}-\frac{\log^+\|\Psi_\a^{n}(p)\|}{q^{n}}\right|\leq \frac{\log (C)}{q^{n+1}},
\]
hence 
\[
\lim_{n\to +\infty}\frac{\log^+\vert\vert\Psi_\alpha^n(p)\|}{q^n}=\lim_{n\to +\infty}\frac{\log^+\max(|P_\a^{(n)}(p)|,|P_\a^{(n-1)}(p)|)}{q^n}=:G^+_{\Psi_\alpha}(p)
\] 
exists. By construction, the function $G_{\Psi_\a}^+$ satisfies 
$G_{\Psi_\a}^+\circ \Psi_\a = q \cdot G_{\Psi_\a}^+$. We note that on restriction to 
the hypersurface $\{z_2=0\}$,
\[
\restriction{G_{\Psi_\a}^+}{\{z_2=0\}}=0.
\] 
Indeed, if $p=(p_0,p_1,0)$, then $\log^+ |P_\a^{(n)}(p)| = O(n)$ since we have seen 
that the forward iterates of $p$ grow at most with Fibonacci speed.

For every $n \geq 0$, we have $\theta \circ \Psi_\a^n=\Phi_\a^n\circ \theta$. In 
particular, the following limits exist and satisfy:
\begin{equation}\label{eqconjgreen}
\displaystyle\lim_{n\to +\infty}\frac{\log^+\vert\vert\theta \circ \Psi_\alpha^n
\vert\vert}{q^n}=\lim_{n\to +\infty}\frac{\log^+\vert\vert\Phi_\alpha^n \circ \theta
\vert\vert}{q^n}=G_{\Phi_\a}^+\circ \theta.
\end{equation}
Define the open set $\mathcal{U}:=\big\{(z_0,z_1,z_2)\in\mathbb{C}^3\,\vert\,z_2\neq 0\big\}$ 
and let $p=(p_0,p_1,p_2) \in \mathcal{U}$. For every $n \geq 0$, recall that
\[
\theta \circ \Psi_\a^n(p)=\theta\Big(P_\a^{(n)}(p),P_\a^{(n-1)}(p),\a^n p_2\Big)=\Big(P_\a^{(n)}(p)(\a^n p_2)^l,P_\a^{(n-1)}(p)(\a^n p_2)^l,\a^n p_2\Big).
\]
For $j\in\{n-1,n\}$, we have 
\[
\log^+|P_\a^{(j)}(p)| -l \log^+ |\a^n p_2|\leq \log^+ |P_\a^{(j)}(p)(\a^n p_2)^l |\leq \log^+|P_\a^{(j)}(p)| + l \log^+ |\a^n p_2|,
\] 
so that 
\[
\frac{\log^+\vert\vert\theta \circ \Psi_\alpha^n(p)\vert\vert}{q^n}=\frac{\log^+\vert\vert\Psi_\alpha^n(p)\vert\vert}{q^n}+o(1).
\] 
We deduce from (\ref{eqconjgreen}) that
\begin{equation}\label{defgpsi}
G^+_{\Psi_\alpha}(p)=\lim_{n\to +\infty}\frac{\log^+\vert\vert\Psi_\alpha^n(p)\vert\vert}{q^n}=\lim_{n\to +\infty}\frac{\log^+\vert\vert\theta \circ \Psi_\alpha^n(p)\vert\vert}{q^n}=G_{\Phi_\a}^+\circ \theta(p)=G_{\phi_\a}^+ \circ h (p).
\end{equation}
Now if $p=(p_0,p_1,0)$, then we have seen that $G_{\Psi_\a}^+(p)=0$. Note that 
$h(p)=0_{\C^2}$ and $\theta(p)=0_{\C^3}$; therefore, 
$G_{\Phi_\a}^+\circ \theta(p)=G_{\phi_\a}^+ \circ h (p)=0$. We conclude that 
(\ref{defgpsi}) holds for any point $p \in \C^3$. 

The function $G_{\Psi_\a}^+$ is not $\equiv -\infty$, it is upper semicontinuous 
and satisfies the sub-mean value property (since $G_{\Phi_\a}^+$ does and 
$\theta\colon \C^3\to \C^3$ is holomorphic); in other terms, $G_{\Psi_\a}^+$ is plurisubharmonic. 
Moreover, we know that $G_{\phi_\a}^+$ is Hölder continuous, and $h$ is holomorphic,
hence $G_{\Psi_\a}^+$ is Hölder continuous as well. 
We have shown:

\begin{prop}\label{propgreen}
For any point $p \in \C^3$, the limit 
\[
\displaystyle\lim_{n\to +\infty}\frac{\log^+\vert\vert\Psi_\alpha^n(p)\vert\vert}{q^n}=:G^+_{\Psi_\alpha}(p)
\] 
exists; the function $G^+_{\Psi_\alpha}=G_{\Phi_\a}^+ \circ \theta=G_{\phi_\a}^+ \circ h$ is  plurisubharmonic, 
Hölder continuous, and satisfies $G_{\Psi_\a}^+\circ \Psi_\a = q \cdot G_{\Psi_\a}^+$. We can then 
define the positive current $T_{\Psi_\a}^+:=\mathrm{d d^c}G^+_{\Psi_\alpha}$. The maps 
$\restriction{\theta}{\mathcal{U}}$ and $\restriction{h}{\mathcal{U}}$ are submersions, and 
$\restriction{T_{\Psi_\a}^+}{\mathcal{U}}= (\restriction{\theta}{\mathcal{U}})^*(\restriction{T_{\Phi_\a}^+}{\mathcal{U}})=(\restriction{h}{\mathcal{U}})^*(\restriction{T_{\phi_\a}^+}{\mathcal{U}})$. We also have
$\Psi_\a^* (T_{\Psi_\a}^+)=q\cdot T_{\Psi_\a}^+$.
\end{prop}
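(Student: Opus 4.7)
The plan is to observe that almost every assertion has already been established in the discussion preceding the statement, and then to fill in the remaining facts about the current $T_{\Psi_\a}^+$. More precisely, the telescoping estimate using the constant $C(p_2)=3\max(1,|p_2|^d)$ shows that the sequence $\bigl(q^{-n}\log^+\|\Psi_\a^n(p)\|\bigr)_n$ is Cauchy, hence convergent, and the functional equation $G_{\Psi_\a}^+\circ\Psi_\a=q\cdot G_{\Psi_\a}^+$ is immediate from the definition. The identity $G_{\Psi_\a}^+=G_{\Phi_\a}^+\circ\theta=G_{\phi_\a}^+\circ h$ was verified by comparing $\log^+\|\theta\circ\Psi_\a^n(p)\|$ with $\log^+\|\Psi_\a^n(p)\|$ on $\mathcal{U}$ (the error being controlled by $l\log^+|\a^n p_2|=o(q^n)$) and by checking separately that both sides vanish on $\{z_2=0\}$. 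Plurisubharmonicity and Hölder continuity follow by transport from the corresponding properties of $G_{\phi_\a}^+$ through the holomorphic map $h$. I would begin by explicitly collecting these three points from the text above.

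Next I would define $T_{\Psi_\a}^+:=\mathrm{dd}^c G_{\Psi_\a}^+$, which is a positive closed current on $\C^3$ because $G_{\Psi_\a}^+$ is plurisubharmonic. To show that $\theta|_\mathcal{U}$ and $h|_\mathcal{U}$ are submersions, I would compute the Jacobian matrices: for $\theta(z_0,z_1,z_2)=(z_0z_2^l,z_1z_2^l,z_2)$ the Jacobian determinant equals $z_2^{2l}$, which is nonzero on $\mathcal{U}=\{z_2\neq 0\}$, so $\theta|_\mathcal{U}$ is in fact a local biholomorphism; the differential of $h$ is obtained by deleting the last row of the Jacobian of $\theta$, and the resulting $2\times 3$ matrix has rank $2$ precisely when $z_2\neq 0$. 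Both restrictions are therefore submersions onto their images.

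Once the submersion property is established, the pullback identities follow from the functorial behavior of $\mathrm{dd}^c$ under submersions applied to the equalities $G_{\Psi_\a}^+=G_{\Phi_\a}^+\circ\theta$ and $G_{\Psi_\a}^+=G_{\phi_\a}^+\circ h$, which give $\restriction{T_{\Psi_\a}^+}{\mathcal{U}}=(\restriction{\theta}{\mathcal{U}})^{*}(\restriction{T_{\Phi_\a}^+}{\mathcal{U}})=(\restriction{h}{\mathcal{U}})^{*}(\restriction{T_{\phi_\a}^+}{\mathcal{U}})$. Finally, since $\Psi_\a$ is a polynomial automorphism of $\C^3$, pullback by $\Psi_\a$ commutes with $\mathrm{dd}^c$; applying $\mathrm{dd}^c$ to $G_{\Psi_\a}^+\circ\Psi_\a=q\cdot G_{\Psi_\a}^+$ then yields $\Psi_\a^{*}T_{\Psi_\a}^+=q\cdot T_{\Psi_\a}^+$. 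The only mildly technical point is checking the submersion property and the compatibility of $\mathrm{dd}^c$ with the pullbacks; everything else is a direct consolidation of what precedes the statement.
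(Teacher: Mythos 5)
Your proposal is correct and follows essentially the same route as the paper: the paper's proof is precisely the discussion preceding the statement, with the claims about $T_{\Psi_\a}^+$, the submersion property, and the pullback identities left unverified, and you supply these missing details via the Jacobian computation and the standard compatibility of $\mathrm{dd}^c$ with holomorphic pullback. The only minor point worth noting is that on $\mathcal{U}$ the map $\theta$ is in fact a biholomorphism onto its image (not merely a local one), since $z_2$ is preserved, but this does not affect the argument.
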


\begin{rem}
We observe that contrary to the case of $\phi_\a$, the set $K_{\Psi_\a}^+$ of points 
whose forward orbit is bounded is stricly contained in $\big\{p\in\C^3\ \vert\ G_{\Psi_\a}^+(p)=0\big\}$; 
indeed, we have seen that the latter always contains $\{z_2=0\} \not\subset K_{\Psi_\a}^+$. 
\end{rem}

\section{Analysis of the dynamics of the automorphism $\Psi_\a$}

In this section, we further analyze the dynamics of the automorphism $\Psi_\a$, 
distinguishing between the value of $0<|\alpha|\leq 1$. In particular, we want 
to describe what happens outside the invariant hypersurface $\{z_2=0\}$, 
where we have seen that the dynamics corresponds to the one of 
a linear Anosov diffeomorphism.

We see that a transition occurs for $|\a| = \varphi^{(1-q)/d}$. Indeed, 
when $|\a| < \varphi^{(1-q)/d}$, we observe different behaviors in
the escape speed outside $\{z_2=0\}$ according to the choice
of the starting point $p=(p_0,p_1,p_2)$: Fibonacci, or bigger than 
$\eta^{q^n}$ for some $\eta > 1$. On the contrary, for $|\a| > \varphi^{(1-q)/d}$, 
we see that it is impossible to escape to infinity with Fibonacci speed, 
while the second case persists. 

Let us say a few words about the critical value $\varphi^{(1-q)/d}$ where the
 transition happens. We define the cocycle $A\colon\C^3 \to \mathrm{GL}_2(\C)$ by:
\[
A(z_0,z_1,z_2):=\begin{pmatrix}
1+z_0^{q-1}z_2^d & 1\\
1 & 0
\end{pmatrix},
\]
and if $M\in \mathfrak{M}_2(\C)$ and $v=(v_0,v_1) \in \C^2$, we set $M\cdot v:=vM^T$. 
Recall that for every $z_2 \in \C$, we consider $\psi_{z_2}=(z_0+z_1+z_0^q z_2^d,z_0)$. 
We remark that for every $p=(p_0,p_1,p_2) \in \C^3$, $\psi_{p_2}(p_0,p_1)=A(p)\cdot(p_0,p_1)$. 
As usual we denote $A_0(p):=\mathrm{Id}$ and for $n \geq 1$,
\[
A_n(p):=A(\Psi_\a^{n-1}(p)) \cdot A(\Psi_\a^{n-2}(p))\ \dots \  A(\Psi_\a(p)) \cdot A(p).
\]
In particular, for every $n \geq 0$, $\Psi_\a^n(p)=(A_n(p) \cdot (p_0,p_1),\a^n p_2)$; equivalently, 
$(P_\a^{(n)}(p),P_\a^{(n-1)}(p))=A_n(p) \cdot (p_0,p_1)$. Note that 
\[
A(\Psi_\a^n(p))=\begin{pmatrix}
1+(P_\a^{(n)}(p))^{q-1}\a^{nd} p_2^d & 1\\
1 & 0
\end{pmatrix}.
\]
\begin{itemize}
\smallskip
\item If $\lim\limits_{n \to +\infty} \big(P_\a^{(n)}(p)\big)^{q-1}\a^{nd} = 0$, then 
$\lim\limits_{n \to +\infty} A(\Psi_\a^n(p)) = \begin{pmatrix}
1 & 1\\
1 & 0
\end{pmatrix}$, whose largest eigenvalue is $\varphi$.
Then the growth will be exactly Fibonacci unless the initial point 
belongs to $W_{\Psi_\a}^s(0_{\C^3})$. But then 
\[
\big(\varphi^{q-1} |\alpha|^d\big)^n=O\big(|P_\a^{(n)}(p)|^{q-1}|\a|^{nd}\big)=o(1),
\] 
and necessarily $|\a| < \varphi^{(1-q)/d}$.
\smallskip
\item If $|\a| > \varphi^{(1-q)/d}$, Fibonacci growth is impossible; indeed if 
$|P_\a^{(n)}(p)| \geq C \varphi^n$ with $C >0$, 
then 
\[
|P_\a^{(n)}(p)|^{q-1}|\a|^{nd} \geq C^{q-1} \big(\varphi^{q-1} |\alpha|^d\big)^n 
\]
but here $\eta:=\varphi^{q-1} |\alpha|^d > 1$ so that 
$|P_\alpha^{(n+1)}(p)| \gtrsim C^{q-1} \eta^{n} |P_\alpha^{(n)}(p)|$ and the growth is much more in fact.
\end{itemize}

We will also see that this transition reflects an analogous 
change in the dynamics of the Hénon automorphism $\phi_\a$: for $|\alpha|<\varphi^{(1-q)/d}$, 
the point $0_{\C^2}$ is a sink of $\phi_\a$, while for  $|\alpha|>\varphi^{(1-q)/d}$, the point 
$0_{\C^2}$ becomes a saddle fixed point.\\

The following general lemma will be useful in the analysis that follows. 
\begin{lem}\label{proplessfib}
Assume that $0<\vert\alpha\vert\leq1$, and that
$p=(p_0,p_1,p_2)$
satisfies: 
\[
|P_\a^{(n)}(p)|=O([(1-\varepsilon)\varphi]^n).
\]
Then with our previous notations, $p \in \mathcal{Z}$. 
\end{lem}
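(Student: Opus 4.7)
The plan is straightforward and relies essentially on Lemma \ref{lemmarecphi}, which already packages the recursion in exactly the form we need. Recall that lemma gives, for every $n \geq 0$,
\[
P_\a^{(n+1)}(p) + \varphi^{-1} P_\a^{(n)}(p) = \varphi^n g_n(p),
\]
so that
\[
g_n(p) = \bigl(P_\a^{(n+1)}(p) + \varphi^{-1} P_\a^{(n)}(p)\bigr) \varphi^{-n}.
\]

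The first step is to plug the hypothesis $|P_\a^{(n)}(p)| = O([(1-\varepsilon)\varphi]^n)$ directly into this identity. We get
\[
|g_n(p)| \leq \bigl(|P_\a^{(n+1)}(p)| + \varphi^{-1}|P_\a^{(n)}(p)|\bigr)\varphi^{-n} = O\bigl((1-\varepsilon)^n\bigr),
\]
so $g_n(p) \to 0$ as $n \to +\infty$.

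The second step is to translate this into membership of $\mathcal{Z}$. By definition,
\[
g_n(p) = \varphi p_0 + p_1 + p_2^d \sum_{j=0}^{n} \bigl(P_\a^{(j)}(p)\bigr)^q \varphi^{-j}\alpha^{jd},
\]
so the partial sums of the series defining $g(p)$ differ from $g_n(p)$ by the constant $\varphi p_0 + p_1$ (up to the factor $p_2^d$). If $p_2 \neq 0$, then since $g_n(p) \to 0$, the partial sums converge to $-(\varphi p_0 + p_1)/p_2^d$; thus $p \in \mathcal{D}$ and $g(p) = \lim_n g_n(p) = 0$, i.e.\ $p \in \mathcal{Z}$. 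If instead $p_2 = 0$, the series is trivially convergent so $p \in \mathcal{D}$, and $g(p) = \varphi p_0 + p_1 = g_n(p) \to 0$ forces $g(p)=0$ (consistent with $P_\a^{(n)}(p) = F_{n+1}p_0 + F_n p_1 \sim \tfrac{\varphi^n}{\sqrt{5}}(\varphi p_0 + p_1)$ being forced to grow strictly slower than $\varphi^n$).

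There is no real obstacle here: the hypothesis was chosen so that the factor $\varphi^{-n}$ in the expression for $g_n(p)$ beats the growth of $P_\a^{(n\pm 1)}(p)$, leaving a geometric decay rate $(1-\varepsilon)$. The content of the lemma is really just the observation that the recursion of Lemma \ref{lemmarecphi} converts sub-Fibonacci growth of the iterates into vanishing of $g$, regardless of how $|\alpha| \leq 1$ is chosen.
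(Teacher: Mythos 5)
Your proof is correct and follows essentially the same route as the paper: both apply Lemma \ref{lemmarecphi} to rewrite $g_n(p)$ as $(P_\a^{(n+1)}(p)+\varphi^{-1}P_\a^{(n)}(p))\varphi^{-n}$, then use the sub-Fibonacci hypothesis to get $|g_n(p)|=O((1-\varepsilon)^n)\to 0$, concluding $g(p)=0$. You are a bit more explicit than the paper in verifying convergence of the defining series (hence $p\in\mathcal{D}$) and in separating the $p_2=0$ case, but the argument is the same.
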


\begin{proof}
This follows again from Lemma \ref{lemmarecphi}. Indeed for every $n \geq 0$,
\[
\left|g_n(p)\right| \leq \big(|P_\a^{(n+1)}(p)|+
\varphi^{-1}|P_\a^{(n)}(p)|\big)\varphi^{-n}= O\big((1-\varepsilon)^n\big)
\]
hence $g(p)=\lim\limits_{n \to +\infty} g_n(p)=0$.
\end{proof}

\subsection{Points escaping to infinity with maximal speed}

The results of this subsection hold for any $0<|\alpha| \leq 1$. 
We start by exhibiting an explicit non-empty open set of points 
escaping to infinity very fast; then we state some facts concerning the set of 
points going to infinity with maximal speed, and show how they can be derived 
from the properties of the Green function $G_{\Psi_\a}^+$.

Set 
$\gamma:=\frac{\ln\vert\alpha\vert}{\ln(\varphi)}$. We choose $M\geq 0$ sufficiently large so that 
$M(q-1)+d \gamma > 0$ (this is possible since by hypothesis, $q-1 >0$). 
  
\begin{prop}\label{explss}
We define the open set
\[
\Omega:=\big\{p=(p_0,p_1,p_2)\in\mathbb{C}^3\,\vert\,\  \vert p_0\vert > \vert p_1\vert>0\ \text{and}\
\vert p_1\vert^{q-1} \vert p_2\vert^d > 2+\varphi^M\big\}.
\]
Then for any point $p\in \Omega$, we have $G_{\Psi_\a}^+(p)>0$;
moreover the sequence $(|P_\a^{(n)}(p)|)_n$ is increasing.
\end{prop}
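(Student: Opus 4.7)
The plan is to track, alongside the sequence $u_n := |P_\a^{(n)}(p)|$, the scalar $w_n := u_n^{q-1}|\alpha|^{nd}|p_2|^d$, which measures how strongly the nonlinear term dominates the linear ones in the recursion
\[ P_\a^{(n+1)} = P_\a^{(n)} + P_\a^{(n-1)} + (P_\a^{(n)})^q(\alpha^n p_2)^d. \]
The hypotheses give $u_0 > u_{-1}$ and $w_0 \geq |p_1|^{q-1}|p_2|^d > 2+\varphi^M$.

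The first step would be a joint induction showing that for every $n\geq 0$, $u_{n+1}>u_n$ and $w_{n+1}\geq \varphi^{M(q-1)+d\gamma} w_n$. Assuming $u_{n-1}\leq u_n$, the reverse triangle inequality applied to the recursion yields
\[ u_{n+1} \geq u_n^q|\alpha|^{nd}|p_2|^d - 2u_n = u_n(w_n-2)>u_n\varphi^M\geq u_n, \]
and substituting this estimate into the definition of $w_{n+1}$ produces $w_{n+1}\geq (w_n-2)^{q-1}w_n|\alpha|^d\geq \varphi^{M(q-1)+d\gamma} w_n$. The inequality $\varphi^{M(q-1)+d\gamma}>1$ coming from the choice of $M$ ensures that $w_n$ stays above $2+\varphi^M$ for every $n\geq 0$, which closes the induction and proves the monotonicity assertion of the proposition.

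For the positivity of $G_{\Psi_\a}^+(p)$, I would rewrite the above estimate as $u_{n+1}\geq u_n^q|\alpha|^{nd}|p_2|^d(1-2/w_n)$, take logarithms, divide by $q^{n+1}$, and sum telescopically. Using $\sum_{n\geq 0} n/q^{n+1} = 1/(q-1)^2$ and $\sum_{n\geq 0} 1/q^{n+1} = 1/(q-1)$, the limit comparison yields
\[ G_{\Psi_\a}^+(p) \geq \log|p_0| + \frac{d\log|p_2|}{q-1} + \frac{d\log|\alpha|}{(q-1)^2} + \sum_{n\geq 0}\frac{\log(1-2/w_n)}{q^{n+1}}. \]
The uniform lower bound $w_n\geq 2+\varphi^M$ controls the error series from below by $\tfrac{1}{q-1}\log\tfrac{\varphi^M}{2+\varphi^M}$, while the hypothesis $|p_0|^{q-1}|p_2|^d > 2+\varphi^M$ bounds the first two terms from below by $\tfrac{\log(2+\varphi^M)}{q-1}$. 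The contributions involving $\log(2+\varphi^M)$ then cancel exactly and what remains is
\[ G_{\Psi_\a}^+(p) \geq \frac{\log\varphi}{(q-1)^2}\bigl(M(q-1)+d\gamma\bigr) > 0. \]

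The delicate point is this final cancellation: none of the intermediate terms has a favourable sign in general---$|p_0|$ may be small, and when $|\alpha|<1$ the term $d\log|\alpha|/(q-1)^2$ represents a loss---so positivity is not obtained by controlling any one term. What makes the estimate work is that the tunable threshold $2+\varphi^M$ appearing in the hypothesis is exactly matched against the multiplicative error $\log(1-2/w_n)$ propagated through the telescoping, and the defining inequality $M(q-1)+d\gamma>0$ is precisely the condition needed to leave a strictly positive residue after cancellation.
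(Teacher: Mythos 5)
Your proof is correct, but it takes a genuinely different route from the paper's. The paper splits the argument into two lemmas: the first proves, by induction, the weak estimate $|P_\a^{(n)}(p)|\geq |p_1|\varphi^{Mn}$ together with monotonicity; the second is a general bootstrapping lemma (Lemma \ref{lemmaspeedexplosion}) which upgrades any superpolynomial lower bound, for a sequence with increasing iterates, to a doubly-exponential one $|P_\a^{(n)}(p)|\geq\eta^{q^n}$, via a two-stage log-comparison (first $x_n\geq(1+\varepsilon)^{n-n_0'}x_{n_0'}$, then, once $x_n/n^2$ dominates the lower-order terms, $x_{n+1}\geq q(1-1/n^2)x_n$), and only then concludes $G_{\Psi_\a}^+(p)>0$. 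You instead run a single joint induction on $u_n=|P_\a^{(n)}(p)|$ and the ``relative nonlinearity'' $w_n=u_n^{q-1}|\alpha|^{nd}|p_2|^d$, maintaining $u_{n+1}>u_n$ and the uniform lower bound $w_n>2+\varphi^M$ (the geometric growth of $w_n$ is only used to propagate this bound), and then telescope $\log u_{n+1}\geq q\log u_n+nd\log|\alpha|+d\log|p_2|+\log(1-2/w_n)$ divided by $q^{n+1}$ to get an explicit lower bound on $G_{\Psi_\a}^+(p)$ directly. The cancellation you identify between the hypothesis $|p_0|^{q-1}|p_2|^d>2+\varphi^M$ and the error series $\sum q^{-(n+1)}\log(1-2/w_n)\geq\tfrac{1}{q-1}\log\tfrac{\varphi^M}{2+\varphi^M}$ is exact and leaves the residue $\tfrac{\log\varphi}{(q-1)^2}\bigl(M(q-1)+d\gamma\bigr)>0$. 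What your approach buys: it is shorter, avoids the delicate two-stage bootstrap, and yields a \emph{uniform, explicit} positive lower bound on $G_{\Psi_\a}^+$ over all of $\Omega$ rather than mere positivity. What the paper's approach buys: Lemma \ref{lemmaspeedexplosion} is stated in a reusable form (requiring only monotonicity plus a superpolynomial lower bound after some time) and is indeed invoked again in Propositions \ref{proptrichotomy} and \ref{propdyn1}, where the starting point is not in $\Omega$ and your uniform $w_n$ bound is unavailable.
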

  
The proof splits in two lemmas that we are going to detail now. 

\begin{lem}\label{lem:superpol}
For any point $p$ in $\Omega$ the escape speed is superpolynomial: for any $n \geq -1$,
\begin{equation}\label{fibonacci1}
\vert P_\alpha^{(n)}(p)\vert \geq \vert p_1\vert \varphi^{Mn}.
\end{equation} 
Moreover the sequence $(|P_\a^{(n)}(p)|)_n$ is increasing.
\end{lem}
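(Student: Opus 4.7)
The plan is to prove both assertions simultaneously by induction on $n \geq -1$, exploiting the defining recurrence
\[
P_\a^{(n+1)} = P_\a^{(n)} + P_\a^{(n-1)} + (P_\a^{(n)})^q \alpha^{nd} z_2^d
\]
and the two inequalities that define $\Omega$. The base cases $n=-1$ and $n=0$ are essentially free: $|P_\a^{(-1)}(p)|=|p_1|\geq |p_1|\varphi^{-M}$ since $M\geq 0$, and $|P_\a^{(0)}(p)|=|p_0|>|p_1|=|p_1|\varphi^{0}$, which also shows the strict inequality $|P_\a^{(0)}(p)|>|P_\a^{(-1)}(p)|$. The initial step $0 \to 1$ requires a one-line computation: using $|p_0|^{q-1}>|p_1|^{q-1}$ and $|p_1|^{q-1}|p_2|^d>2+\varphi^M$,
\[
|P_\a^{(1)}(p)|\geq |p_0|^q|p_2|^d-|p_0|-|p_1| > |p_0|(2+\varphi^M)-2|p_0| = |p_0|\varphi^M,
\]
which yields both $|P_\a^{(1)}(p)|>|p_1|\varphi^M$ and $|P_\a^{(1)}(p)|>|P_\a^{(0)}(p)|$.

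For the inductive step at $n\geq 1$, I would apply the reverse triangle inequality to the recurrence and then use the monotonicity half of the induction hypothesis to bound $|P_\a^{(n-1)}(p)|\leq |P_\a^{(n)}(p)|$, obtaining
\[
|P_\a^{(n+1)}(p)|\geq |P_\a^{(n)}(p)|\bigl(|P_\a^{(n)}(p)|^{q-1}|\alpha|^{nd}|p_2|^d - 2\bigr).
\]
The crux is to show the bracketed factor exceeds $\varphi^M$. Using the growth half of the induction hypothesis and the identity $|\alpha|^{nd}=\varphi^{nd\gamma}$,
\[
|P_\a^{(n)}(p)|^{q-1}|\alpha|^{nd}\geq |p_1|^{q-1}\varphi^{n(M(q-1)+d\gamma)}\geq |p_1|^{q-1},
\]
where the last inequality uses $M(q-1)+d\gamma>0$ and $n\geq 0$. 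Multiplying by $|p_2|^d$ and invoking the definition of $\Omega$ gives the bracket $>2+\varphi^M-2=\varphi^M$, and therefore $|P_\a^{(n+1)}(p)|>|P_\a^{(n)}(p)|\varphi^M\geq |p_1|\varphi^{M(n+1)}$, which also yields the strict monotonicity since $\varphi^M\geq 1$.

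The main obstacle is the delicate matching of the two constants built into $\Omega$: the quantity $2+\varphi^M$ must simultaneously absorb the "$-2|P_\a^{(n)}|$" loss coming from the linear terms in the recurrence and still leave the geometric amplification factor $\varphi^M$ that powers the induction. The role of the side condition $M(q-1)+d\gamma>0$, in turn, is precisely to neutralize the decay of $|\alpha|^{nd}$ when $|\alpha|\leq 1$, so that the factor $|P_\a^{(n)}|^{q-1}|\alpha|^{nd}$ is uniformly bounded below by $|p_1|^{q-1}$ along the orbit; without this, the nonlinear term of the recurrence could eventually be swamped by the contracting $|\alpha|$. Once the combined induction hypothesis is set up correctly (growth plus monotonicity), the step is essentially a reverse triangle inequality followed by this key algebraic bound.
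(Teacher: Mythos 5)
Your proof is correct and follows essentially the same strategy as the paper's: a joint induction on the growth lower bound and the monotonicity, using the reverse triangle inequality together with the condition $M(q-1)+d\gamma>0$ to control the damping from $|\alpha|^{nd}$, and the constant $2+\varphi^M$ in the definition of $\Omega$ to absorb the linear terms while leaving the factor $\varphi^M$. The only cosmetic difference is that you peel off the step $n=0\to 1$ as a separate computation, whereas the paper's inductive step is stated directly for all $n\geq 0$ and already covers it.
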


\begin{proof}
The proof is by induction on $n \geq -1$. Let $p \in \Omega$; we will show that 
$(|P_\a^{(n)}(p)|)_n$ is increasing and that (\ref{fibonacci1}) holds.
It follows from our assumptions that
\smallskip
\begin{itemize}
\item $\vert P_\alpha^{(-1)}(p)\vert = |p_1| \geq |p_1| \varphi^{-M}$.
\smallskip
\item $\vert P_\alpha^{(0)}(p)\vert=\vert p_0\vert \geq |p_1|$.
\smallskip
\item Take $n \geq 0$ and assume that 
 $\vert P_\alpha^{(n)}(p)\vert \geq |p_1|\varphi^{Mn}$ and $|P_\a^{(n)}(p)| \geq |P_\a^{(n-1)}(p)|$. 
We estimate:
\begin{align*}
|P_\a^{(n+1)}(p)| &\geq |P_\a^{(n)}(p)|(|P_\a^{(n)}(p)|^{q-1} |\a^n p_2|^d -2)\\
&\geq |P_\a^{(n)}(p)| (|p_1|^{q-1} |p_2|^d \varphi^{(M(q-1)+d \gamma)n} -2)\\
&\geq |p_1| \varphi^{Mn} (|p_1|^{q-1} |p_2|^d -2)\\
&\geq |p_1| \varphi^{M(n+1)}
\end{align*}
because $M(q-1)+d \gamma > 0$ and $|p_1|^{q-1} |p_2|^d -2 > \varphi^M$. 
Since $|p_1|^{q-1} |p_2|^d -2 \geq 1$, the previous inequalities also show that 
$|P_\a^{(n+1)}(p)| \geq |P_\a^{(n)}(p)|$, which concludes the induction.
\end{itemize}
\end{proof}

\begin{lem}\label{lemmaspeedexplosion}
Recall that $\gamma:=\frac{\ln\vert\alpha\vert}{\ln(\varphi)}$ and that $M\geq 0$ is chosen such that $M(q-1)+d \gamma > 0$.
Take $p \in \C^3$ such that the sequence $(|P_\a^{(n)}(p)|)_{n\geq 0}$
is increasing, and assume that there exists $n_0 \geq 1$ such that for every $n \geq n_0$, the following 
inequality holds\footnote{In particular, this is 
satisfied for points $p \in \Omega$ as we have seen
in Lemma \ref{lem:superpol}.}:
\[
|P_\a^{(n)}(p)| \geq \varphi^{Mn}.
\]
Then the escape speed is much bigger in fact: there exist $n_1 \geq n_0$ and $\eta >1$ such that 
for every $n \geq n_1$,
\[
|P_\a^{(n)}(p)| \geq \eta^{q^n}.
\]
In terms of the Green function introduced above, we then get $G_{\Psi_\a}^+(p)>0$.
\end{lem}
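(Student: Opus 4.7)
The starting idea is to exploit the hypothesis $M(q-1)+d\gamma>0$ so that the gain factor $|P_\a^{(n)}(p)|^{q-1}|\alpha^n p_2|^d$ in the recurrence becomes overwhelmingly large. Under the assumption $|P_\a^{(n)}(p)|\geq \varphi^{Mn}$ for $n\geq n_0$, this factor is at least $|p_2|^d\varphi^{n(M(q-1)+d\gamma)}=|p_2|^d\varphi^{n\delta}$ with $\delta:=M(q-1)+d\gamma>0$, so we can pick $n_2\geq n_0$ such that $|P_\a^{(n)}(p)|^{q-1}|\alpha^n p_2|^d\geq 3$ for every $n\geq n_2$. Using this bound together with the monotonicity of $(|P_\a^{(n)}(p)|)_n$ and the triangle inequality applied to $P_\a^{(n+1)}=P_\a^{(n)}+P_\a^{(n-1)}+(P_\a^{(n)})^q(\alpha^n p_2)^d$, one gets
\[
|P_\a^{(n+1)}(p)|\geq |P_\a^{(n)}(p)|\bigl(|P_\a^{(n)}(p)|^{q-1}|\alpha^n p_2|^d-2\bigr)\geq \tfrac{1}{3}|P_\a^{(n)}(p)|^q|\alpha^n p_2|^d
\]
for every $n\geq n_2$.

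Setting $v_n:=\log|P_\a^{(n)}(p)|$, this translates into the affine inequality
\[
v_{n+1}\geq qv_n + nd\log|\alpha|+d\log|p_2|-\log 3.
\]
The key step is to linearize it: choose constants $a:=\frac{d\log|\alpha|}{q-1}$ and $b:=\frac{a+d\log|p_2|-\log 3}{q-1}$, and set $w_n:=v_n+an+b$. A direct computation (the linear-in-$n$ and constant terms are killed by this choice of $a,b$) yields
\[
w_{n+1}\geq qw_n,\qquad n\geq n_2.
\]
Consequently $w_n\geq q^{\,n-n_2}w_{n_2}$ for all $n\geq n_2$.

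It remains to verify that $w_{n_2}>0$ for $n_2$ sufficiently large, and this is where the hypothesis $\delta>0$ pays off: using $\log|\alpha|=\gamma\log\varphi$ and $v_{n_2}\geq Mn_2\log\varphi$,
\[
v_{n_2}+an_2\geq \frac{n_2\log\varphi}{q-1}\bigl(M(q-1)+d\gamma\bigr)=\frac{n_2\delta\log\varphi}{q-1}\xrightarrow[n_2\to+\infty]{}+\infty,
\]
so $w_{n_2}>0$ for $n_2$ large. Then $v_n\geq q^{n-n_2}w_{n_2}-an-b\geq c\,q^n$ with $c:=w_{n_2}/(2q^{n_2})>0$ for $n\geq n_1$ large enough (absorbing the subexponential correction $-an-b$, noting that $-a\geq 0$). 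Setting $\eta:=e^{c}>1$, this gives exactly $|P_\a^{(n)}(p)|\geq \eta^{q^n}$ for $n\geq n_1$, and since $\|\Psi_\a^n(p)\|\geq |P_\a^{(n)}(p)|$ eventually, we conclude $G_{\Psi_\a}^+(p)\geq c>0$.

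The only genuinely delicate point is the last one: showing that the linearized quantity $w_{n_2}$ can be made positive. The hypothesis $|P_\a^{(n)}(p)|\geq \varphi^{Mn}$ alone only provides an \emph{exponential} lower bound, whereas we need to compete with a doubly exponential $q^n$ growth rate. The success of the scheme hinges on the fact that the "target rate" $-an_2$ entering $w_{n_2}$ is itself only linear in $n_2$, so the strict positivity $\delta>0$ is precisely what is needed to make the linear comparison go through.
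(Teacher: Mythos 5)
Your proof is correct, and it takes a genuinely different route from the paper's. Both start from the same recursive inequality $v_{n+1}\geq qv_n+nd\log|\alpha|+d\log|p_2|-\log 3$ (which the paper obtains slightly more directly from monotonicity alone, via $|P_\a^{(n)}(p)|^q|\alpha^n p_2|^d=|P_\a^{(n+1)}-P_\a^{(n)}-P_\a^{(n-1)}|\leq 3|P_\a^{(n+1)}(p)|$, without needing the auxiliary threshold $n_2$ where the gain factor is $\geq 3$). From there the paper runs a two-step bootstrap: write $q=(1+\varepsilon)+(q-1-\varepsilon)$, use the lower bound $x_n\geq Mn\ln\varphi$ to absorb the affine error and get exponential growth $x_n\gtrsim(1+\varepsilon)^n$, then reuse this growth to show the error term is $O(x_n/n^2)$ and deduce $x_{n+1}\geq q(1-1/n^2)x_n$, finally invoking the convergence of $\prod(1-1/n^2)$. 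Your version instead treats the inequality as an affine recursion and linearizes it once and for all by the change of variable $w_n=v_n+an+b$, with $a,b$ chosen so that the linear and constant error terms are killed exactly, yielding the pure geometric inequality $w_{n+1}\geq qw_n$. The hypothesis $M(q-1)+d\gamma>0$ then enters at exactly one point, to guarantee that $w_{n_2}>0$ for $n_2$ large, after which the subexponential correction $-an-b$ (note $a\leq 0$ since $|\alpha|\leq 1$) is trivially absorbed. This is arguably cleaner: it avoids the bootstrap, makes the role of the positivity hypothesis transparent, and sidesteps the $\prod(1-1/n^2)$ trick. What it loses is nothing of substance; it is simply the standard "find a particular affine solution and reduce to the homogeneous equation" method applied to the recursion, which the paper did not use.
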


\begin{proof}
Since $(|P_\a^{(n)}(p)|)_{n\geq 0}$ is increasing, we have for every $n \geq 0$, 
\begin{equation}\label{inegpa}
|P_\a^{(n)}(p)|^q (|\a|^n |p_2|)^d = |P_\a^{(n+1)}(p)-P_\a^{(n)}(p)-P_\a^{(n-1)}(p)| \leq 3|P_\a^{(n+1)}(p)|.
\end{equation}
Set $x_n:=\ln |P_\a^{(n)}(p)|$. From our hypotheses, we know that for every $n \geq n_0$,
$x_n \geq Mn \ln \varphi$. Since $M(q-1)+d \gamma > 0$, we can take $\varepsilon >0$ small such that we 
still have $M(q-1-\varepsilon)+d \gamma > 0$. Let $n_0' \geq n_0$ be chosen such that for $n \geq n_0'$, 
$n(M(q-1-\varepsilon)+d \gamma)\ln \varphi +d \ln |p_2| - \ln 3 \geq 0$. 
Thanks to (\ref{inegpa}), we get: for every $n \geq n_0'$,
\begin{align*}
x_{n+1} &\geq q x_n + n d \gamma \ln\varphi + d \ln |p_2| - \ln 3\\
&\geq (1+\varepsilon) x_n + \left(n(M(q-1-\varepsilon)+d \gamma)\ln \varphi +d \ln |p_2| - \ln 3 \right)\\
&\geq  (1+\varepsilon) x_n.
\end{align*}
We then obtain: for every $n \geq n_0'$, 
\[
x_n \geq (1+\varepsilon)^{n-n_0'} x_{n_0'} \geq (1+\varepsilon)^{n-n_0'} M n_0' \ln \varphi.
\]
As a result there exists $n_1 \geq n_0'$ such that for $n\geq n_1$, 
\[
\frac{x_n}{n^2} \geq \frac{(q/2)^{n-n_0'} M n_0' \ln \varphi}{n^2} \geq -( n d \gamma \ln\varphi +d \ln |p_2| - \ln 3).
\] 
We can then refine the previous inequalities: for $n \geq n_1$,
\[
x_{n+1}\geq q x_n + n d \gamma \ln\varphi +d \ln |p_2| - \ln 3\geq q \left(1-\frac{1}{n^2}\right) x_n.
\]
Let $C:=\prod\limits_{n \geq n_1} \left(1-\frac{1}{n^2}\right) x_{n_1} q^{-n_1}>0$; 
for every $n \geq n_1$, we have $x_n \geq C q^n$, hence $|P_\a^{(n)}(p)| \geq\eta^{q^n}$, where 
$\eta:=e^{C} > 1$.
\end{proof}

\begin{rem}
We have seen that the automorphism $\phi_\a$ 
possesses an attractor at infinity 
$X(\phi_\a)=(1:0:0)$ whose basin is biholomorphic to $\C^2$. 
Then there exists $C=C(\alpha)>0$ such that the forward orbit of 
any point $\widetilde p=(p_0,p_1) \in \C^2$ 
such that $|p_0| \gg |p_1|$ and $\|\widetilde p\| \geq C$ is attracted by $X(\phi_\a)$; 
in particular, $\widetilde p \not\in 
K_{\phi_\a}^+$, and thus, $G_{\phi_\a}^+(\widetilde p) >0$. 
If $p=(p_0,p_1,p_2)\in \C^3$ satisfies $|p_0| \gg |p_1|$ and 
$\|(p_0,p_1)\|\cdot |p_2|^l \geq~C$, 
we see that $\|h(p)\|  \geq C$, hence $G_{\phi_\a}^+(h(p)) >0$, 
and $G_{\Psi_\a}^+(p) >0$ as well. The definition of the set $\Omega$ in Proposition \ref{explss} is coherent with this observation.
\end{rem}

\begin{prop}
Set $\widetilde l := 2\max(l,1)$. We have 
\[
1 \leq \limsup\limits_{\|p\| \to +\infty} \frac{G_{\Psi_\a}^+(p)}{\log \|p\|}\leq \widetilde l.
\]
The set $\mathcal{E}:=\big\{p \in \C^3\ \vert \ G_{\Psi_\a}^+(p) >0\big\}$ of points escaping 
to infinity with maximal speed is open, connected and of infinite measure on any complex line 
where $G_{\Psi_\a}^+$ is not identically zero. In particular, the set 
\[
\big\{p \in \C^3\ \vert \ \lim\limits_{n \to +\infty}\|\Psi_\a^n(p)\| = +\infty\big\} 
\]
of points whose forward orbit goes to infinity is of infinite measure. 
\end{prop}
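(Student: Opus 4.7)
The plan is to handle the four distinct claims — growth bounds on $G_{\Psi_\a}^+$, and openness, connectedness, and infinite measure of $\mathcal{E}$ — separately, leveraging the identity $G_{\Psi_\a}^+=G_{\phi_\a}^+\circ h$ from Proposition \ref{propgreen} to transfer classical properties of Hénon type automorphisms of $\C^2$ to our setting.

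For the growth bounds, I would first use the standard estimate $G_{\phi_\a}^+(w)\leq \log^+\|w\|+C$ together with $\|h(p)\|\leq \|p\|^{1+l}$ for $\|p\|\geq 1$ to get $G_{\Psi_\a}^+(p)\leq (1+l)\log\|p\|+O(1)$; since $1+l\leq 2\max(l,1)=\widetilde l$, the upper bound follows. For the lower bound, the sequence $p_n=(n,0,1)$ satisfies $h(p_n)=(n,0)$, which lies in the unstable filtration region of $\phi_\a$ for $n$ large, where the classical asymptotic $G_{\phi_\a}^+(w)=\log\|w\|+O(1)$ applies and yields $G_{\Psi_\a}^+(p_n)/\log\|p_n\|\to 1$. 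Openness of $\mathcal{E}$ is then immediate from the continuity of $G_{\Psi_\a}^+$. For connectedness, I would exploit that $\mathcal{E}\subset\mathcal{U}:=\{z_2\neq 0\}$ (since $G_{\Psi_\a}^+\equiv 0$ on $\{z_2=0\}$), and that $\theta$ restricts to a biholomorphism of $\mathcal{U}$ intertwining $\Psi_\a$ with $\Phi_\a$. As $G_{\Phi_\a}^+$ depends only on the first two coordinates, one gets $\mathcal{E}=\theta^{-1}\big((\C^2\smallsetminus K^+_{\phi_\a})\times\C^*\big)$, and connectedness follows from the well-known connectedness of the basin at infinity $\C^2\smallsetminus K^+_{\phi_\a}$ of the Hénon map $\phi_\a$.

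The main technical obstacle is the infinite measure claim on a complex line $L$ where $G_{\Psi_\a}^+|_L\not\equiv 0$: openness of $\{G|_L>0\}$ alone is not enough to conclude infinite measure, since even non-trivial unbounded open subsets of $\C$ can have finite Lebesgue measure. I would combine the upper bound with the sub-mean value property of the subharmonic function $u:=G_{\Psi_\a}^+|_L$: choosing $z_0\in L$ with $u(z_0)>0$, one has $\pi R^2 u(z_0)\leq \int_{B(z_0,R)}u\,dA$, and the pointwise bound $u(z)\leq \widetilde l\log(1+|z|)+C_0$ from the first part gives
\[
\pi R^2\,u(z_0)\leq \big(\widetilde l\log(1+R+|z_0|)+C_0\big)\cdot\lambda\big(\{u>0\}\cap B(z_0,R)\big),
\]
so $\lambda(\{u>0\}\cap B(z_0,R))\gtrsim R^2/\log R\to+\infty$ as $R\to+\infty$. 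Finally, the last statement reduces to $\lambda(\mathcal{E})=+\infty$: the invariance $G_{\Psi_\a}^+\circ\Psi_\a=q\cdot G_{\Psi_\a}^+$ together with the upper bound forces $\|\Psi_\a^n(p)\|\to+\infty$ whenever $G_{\Psi_\a}^+(p)>0$, so $\mathcal{E}$ is contained in the set of unbounded forward orbits; and $\mathcal{E}\supset\Omega$ has clearly infinite Lebesgue measure in $\C^3$.
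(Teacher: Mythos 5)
Your proof is correct and, for the growth bounds, openness, the infinite-measure-on-a-line claim, and the final assertion, it follows essentially the same route as the paper: transfer through the identity $G^+_{\Psi_\a}=G^+_{\phi_\a}\circ h$, the logarithmic upper bound on $G^+_{\Psi_\a}$, and a sub-mean-value estimate (you average over disks, the paper averages over circles and integrates in $r$, but the computation is the same). The one place you genuinely diverge is connectedness of $\mathcal{E}$. The paper handles it by the same potential-theoretic device used for the infinite-measure claim, invoking a Heins-type theorem: a nonnegative plurisubharmonic function of slow (logarithmic) growth has connected positivity set. You instead observe that $\mathcal{E}\subset\mathcal{U}=\{z_2\neq 0\}$, that $\theta$ restricts to a biholomorphism of $\mathcal{U}$, and that under $\theta$ the set $\mathcal{E}$ corresponds to $(\C^2\smallsetminus K^+_{\phi_\a})\times\C^*$, reducing to the known connectedness of the escape region of a H\'enon map. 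Both are valid; your route pushes the potential-theory content into the classical two-dimensional statement (itself typically proven by a Heins-type argument or via the Hubbard--Oberste-Vorth uniformization of $U^+$), and is arguably a cleaner fit with the structure the paper has already set up around $\theta$ and $h$, whereas the paper's version treats the three-dimensional Green function directly in parallel with the infinite-measure estimate. One small remark: your upper bound actually gives the sharper constant $1+l$ in place of $\widetilde l=2\max(l,1)$, which of course still yields the stated inequality since $1+l\leq\widetilde l$.
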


\begin{proof}
The openness of $\mathcal{E}$ follows directly from  
the continuity of $G_{\Psi_\a}^+$, shown in Proposition \ref{propgreen}.

The proof of the fact that $\mathcal{E}$ has infinite measure follows 
arguments given by Guedj-Sibony \cite{GuedjSibony}. Since~$\phi_\a$ is 
algebraically stable, we know from Proposition 1.3 in \cite{GuedjSibony} that 
$\limsup\limits_{\|\widetilde p\| \to +\infty} \frac{G_{\phi_\a}^+(\widetilde p)}
{\log \|\widetilde p\|}=1$. 
Therefore 
\[
\limsup\limits_{\|p\| \to +\infty} \frac{G_{\Psi_\a}^+(p)}{\log \|p\|}=\limsup\limits_{\|p=(p_0,p_1,p_2)\| \to +\infty} \frac{G_{\phi_\a}^+ \circ h(p)}{\log \|h(p)\|}\times \frac{l\log|p_2|+\log\|(p_0,p_1)\|}{\log \|p\|}\leq \widetilde l \limsup\limits_{\|\widetilde p\| \to +\infty} \frac{G_{\phi_\a}^+(\widetilde p)}
{\log \|\widetilde p\|} = \widetilde l. 
\] 
For the other inequality, we remark that
\[
\limsup\limits_{\|p\| \to +\infty} \frac{G_{\Psi_\a}^+(p)}{\log \|p\|}\geq \limsup\limits_{\|p=(p_0,p_1,1)\| \to +\infty} \frac{G_{\phi_\a}^+ \circ h(p)}{\log \|h(p)\|}\times \frac{ \log \|(p_0,p_1)\|}{\log \|(p_0,p_1,1)\|}=  \limsup\limits_{\|\widetilde p\| \to +\infty} \frac{G_{\phi_\a}^+(\widetilde p)}{\log \|\widetilde p\|} =1.
\]
Assume that $p \in \C^3$ satisfies $G_{\Psi_\a}^+(p)>0$, and for some $v \neq 0_{\C^3}$, consider 
the line $L:=\big\{p+t v\ \vert \ t \in \C\big\}$. Denote by $m(r)$ the Lebesgue measure 
of the set $\big\{e^{i x},\ x \in \mathbb{R}\ \vert\ G_{\Psi_\a}^+(p+r e^{ix} v) >0\big\}$. 
From what precedes, we know that there exists $C> 0$ such that for every $r \geq 0$, 
\[
G_{\Psi_\a}^+(p+r e^{ix} v) \leq \widetilde l \log^+(r)+ C. 
\]
By the sub-mean value property, 
\[
0<G_{\Psi_\a}^+(p)\leq \frac{1}{2 \pi} \int_0^{2 \pi} G_{\Psi_\a}^+(p+r e^{ix} v)  dx \leq 
\frac{1}{2 \pi} (\widetilde l \log^+(r) + C) m(r).
\]
Therefore, $m(r) \geq \frac{2\pi G_{\Psi_\a}^+(p)}{\widetilde l \log^+(r) + C}$, and 
integrating over $r$, we get that the set of points $p$ in $L$ such that 
$G_{\Psi_\a}^+(p) >0$ has infinite measure. The proof of connectivity 
is also based on the slow growth of $G_{\Psi_\a}^+$ and follows from 
similar arguments
(see \cite{Heins}). 
\end{proof}

\subsection{General remarks when $0<|\alpha|<1$}\label{subsgeneralrem}
 
In this case, $0_{\C^3}$ is a hyperbolic fixed point of $\Psi_\a$ of saddle type, 
and Corollary \ref{corvarstable} tells us that the set $K_{\Psi_\a}^+$ 
of points with bounded forward orbit is exactly the stable manifold 
$W_{\Psi_\a}^s(0_{\C^3})$. The positive Julia set $J_{\Psi_\a}^+$ thus corresponds 
to $\partial K_{\Psi_\a}^+=\overline{W_{\Psi_\a}^s(0_{\C^3})}$. Let $p=(p_0,p_1,p_2) \in \C^3$. For $n \geq 0$, 
\begin{equation}\label{eqconjugbiss}
\theta \circ \Psi_\a^n(p)= \big(P_\a^{(n)}(p) (\a^{n}p_2)^l,P_\a^{(n-1)}(p)(\a^{n}p_2)^l,\a^n p_2\big)=\big(\phi_\a^n \circ h (p),\alpha^n p_2\big).
\end{equation}
From (\ref{eqconjugbiss}), we get that $h(W_{\Psi_\a}^s(0_{\C^3}))=W_{\phi_\a}^s(0_{\C^2})=K_{\phi_\a}^+$.\footnote{Indeed, if $p \in W_{\Psi_\a}^s(0_{\C^3})$, then $h(p) \in W_{\phi_\a}^s(0_{\C^2})$; conversely, if $(p_0,p_1) \in K_{\phi_\a}^+$, then $(p_0,p_1)=h(p_0,p_1,1)$ and $(p_0,p_1,1) \in K_{\Psi_\a}^+=W_{\Psi_\a}^s(0_{\C^3})$.} We deduce that 
$J_{\phi_\a}^+=\partial W_{\phi_\a}^s(0_{\C^2})$.
Besides, we know that $K_{\phi_\a}^+=\big\{\widetilde p\in \C^2\ \vert\ G_{\phi_\a}^+(\widetilde p)=0\big\}$, and this set is closed by continuity of $G_{\phi_\a}^+$.
In particular, $W_{\phi_\a}^s(0_{\C^2})$ is closed. For any $\widetilde p \in \C^2$, there are two possible behaviors: either $p \in W_{\phi_\a}^s(0_{\C^2})$ and then 
its forward iterates 
converge to $0_{\C^2}$ exponentially fast, or they go to infinity with maximal speed.

\subsection{Analysis of the dynamics in the case where $0<\vert\alpha\vert<\varphi^{(1-q)/d}$}
We show that under this assumption, we can construct a set of points with 
non-empty interior
for which the escape speed is much smaller, in fact Fibonacci.

From our hypothesis on $\alpha$, we can take 
$\varepsilon>0$ small enough so that 
$\eta:=((1+\varepsilon)\varphi)^q |\a|^d < \varphi$.

\begin{prop}\label{Fibonaccigrowth}
Assume $0<\vert\alpha\vert<\varphi^{(1-q)/d}$. We consider the following open neighborhood of the hypersurface $\{z_2=0\}$:
\[
\Omega':=\big\{p=(p_0,p_1,p_2)\in\mathbb{C}^3\,\vert\, (|p_0|+|p_1|)^{q-1} |p_2|^d < \varphi \varepsilon \big\}.
\] 
If $p\in \Omega'$, there are two possible behaviors:
\smallskip
\begin{itemize}
\item either $p$ belongs to the stable manifold $W_{\Psi_\a}^s(0_{\C^3})$ and then 
its forward iterates converge to $0_{\C^3}$ exponentially fast;
\smallskip
\item or $p$ goes to infinity with Fibonacci speed: $\big(P_{\a}^{(n)}(p) \varphi^{-n}\big)_{n\geq 0}$ converges and we have
\[
\lim_{n \to +\infty}P_{\a}^{(n)}(p) \varphi^{-n} \in \C^*.
\] 
\end{itemize}
\end{prop}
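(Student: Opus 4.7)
My approach centers on the identity from Lemma \ref{lemmarecphi},
\[
P_\a^{(n+1)}(p)+\varphi^{-1}P_\a^{(n)}(p)=\varphi^n g_n(p),
\]
combined with a diagonalization of the associated Fibonacci-like linear recursion, and the dichotomy based on whether the limit $g(p)=\lim_n g_n(p)$ vanishes. The first step is to establish the growth bound $|P_\a^{(n)}(p)|\leq C\,((1+\varepsilon)\varphi)^n$ for $p\in\Omega'$, with $C:=|p_0|+|p_1|$ (after verifying the cases $n=0,1$ by hand using the definition of $\Omega'$). The inductive step relies on the margin
\[
\delta_\varepsilon:=((1+\varepsilon)\varphi)^2-(1+\varepsilon)\varphi-1=\varepsilon(\varphi+2)+O(\varepsilon^2)
\]
between $C((1+\varepsilon)\varphi)^{n+1}$ and $C((1+\varepsilon)\varphi)^n+C((1+\varepsilon)\varphi)^{n-1}$, which absorbs the perturbation $|P_\a^{(n)}(p)|^q|\alpha|^{nd}|p_2|^d\leq C^q|p_2|^d\,\eta^n$; this uses both $\eta<\varphi$ and $\varphi^{q-1}|\alpha|^d<1$, together with the smallness condition $(|p_0|+|p_1|)^{q-1}|p_2|^d<\varphi\varepsilon$ defining $\Omega'$ (the key numerical fact is $\varphi+2=\varphi\sqrt{5}$). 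With this bound the geometric ratio $\eta/\varphi<1$ ensures that the series $\sum_j(P_\a^{(j)}(p))^q\varphi^{-j}\alpha^{jd}$ converges absolutely, so $g(p)$ is well defined.

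Next I diagonalize using the eigenvectors $(\varphi,1)^T$ and $(-\varphi^{-1},1)^T$ of the Fibonacci matrix: write $(P_\a^{(n)}(p),P_\a^{(n-1)}(p))^T=\xi_n(\varphi,1)^T+\zeta_n(-\varphi^{-1},1)^T$. Lemma \ref{lemmarecphi} identifies directly $\xi_n=\varphi^{n-1}g_{n-1}(p)/\sqrt{5}$, hence $\xi_n/\varphi^n\to g(p)/(\varphi\sqrt{5})$. Projecting the recursion on $(-\varphi^{-1},1)^T$ yields the contractive relation $\zeta_{n+1}+\varphi^{-1}\zeta_n=-(P_\a^{(n)}(p))^q(\alpha^n p_2)^d/\sqrt{5}$ with forcing term of order $\eta^n$; the explicit solution gives $|\zeta_n|\leq C'\max(\varphi^{-n},\eta^n)=o(\varphi^n)$ since $\eta<\varphi$. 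Therefore $P_\a^{(n)}(p)/\varphi^n=\xi_n/\varphi^n-\zeta_n/\varphi^{n+1}\to g(p)/\sqrt{5}$. When $g(p)\neq 0$, this delivers the escape with Fibonacci speed and nonzero limit $g(p)/\sqrt{5}\in\C^*$.

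It remains to treat $g(p)=0$, where the previous step only gives $P_\a^{(n)}(p)=o(\varphi^n)$. Here the plan is a bootstrap: starting from $|P_\a^{(n)}(p)|\leq C\sigma^n$ with $\sigma\leq(1+\varepsilon)\varphi$, the same analysis (now with the forcing term controlled by $(\sigma^q|\alpha|^d)^n$ rather than $\eta^n$) yields the improved bound $|P_\a^{(n)}(p)|=O(\max(\varphi^{-n},(\sigma^q|\alpha|^d)^n))$. The map $\sigma\mapsto\sigma^q|\alpha|^d$ has unstable fixed point $|\alpha|^{-l}$, and since $\sigma_0=(1+\varepsilon)\varphi<|\alpha|^{-l}$ (equivalent to $(1+\varepsilon)\varphi|\alpha|^l<1$, which is our hypothesis $|\alpha|^d<\varphi^{1-q}$ for $\varepsilon$ small enough), the sequence $\sigma_k$ decays doubly-exponentially to $0$. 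After finitely many iterations $\sigma_k<\varphi^{-1}$, at which point $|P_\a^{(n)}(p)|=O(\varphi^{-n})$, so $p\in W_{\Psi_\a}^s(0_{\C^3})$ with exponential convergence to $0_{\C^3}$.

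The main obstacle I anticipate is the bootstrap in the case $g(p)=0$: while the mechanism is clean, one must verify that each refinement of the growth bound propagates through the explicit expressions for $\xi_n$ and $\zeta_n$ with constants that remain under control across infinitely many improvements, and that the initial growth bound of Step~1 is uniform enough over $p\in\Omega'$ to initialize the iteration. The inductive step in Step~1 itself is also delicate, requiring the precise numerical identity $\varphi\sqrt{5}=\varphi+2$ to reconcile the constant $\varphi\varepsilon$ in the definition of $\Omega'$ with the available margin $\delta_\varepsilon$.
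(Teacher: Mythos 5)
Your proof is correct, and it takes a genuinely different route from the paper's. The paper splits the work into Lemma~\ref{proplessfib1} (an a priori bound $|P_\a^{(n)}(p)|\le C\varphi^n$) and Lemma~\ref{lemutile}, whose core is a geometric cone argument: the cocycle matrices $A(\Psi_\a^j(p))$ converge exponentially to the Fibonacci matrix $M_0$, and expanding cones around $\Delta_\varphi$ force the growth of $\|A_n(p)\cdot(p_0,p_1)\|$ to be exactly of order $\varphi^n$ whenever $p\notin W_{\Psi_\a}^s(0_{\C^3})$; the convergence of $P_\a^{(n)}(p)\varphi^{-n}$ is then extracted from the ratio $P_\a^{(n)}(p)/P_\a^{(n-1)}(p)\to\varphi$ together with the identity $g_n(p)=(P_\a^{(n+1)}(p)+\varphi^{-1}P_\a^{(n)}(p))\varphi^{-n}$. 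You instead diagonalize the affine recursion against the eigenbasis $\{(\varphi,1),(-\varphi^{-1},1)\}$: the unstable coefficient is explicitly $\xi_n=\varphi^{n-1}g_{n-1}(p)/\sqrt{5}$, the stable coefficient $\zeta_n$ solves a contraction forced by the nonlinear term of order $\eta^n$ with $\eta<\varphi$, and you obtain directly $P_\a^{(n)}(p)\varphi^{-n}\to g(p)/\sqrt{5}$ under the preliminary bound $|P_\a^{(n)}(p)|\le C((1+\varepsilon)\varphi)^n$. Your bootstrap then shows the implication $g(p)=0\Rightarrow p\in W_{\Psi_\a}^s(0_{\C^3})$, which in the paper is established later (Proposition~\ref{lemstablemfd}) and there relies on the cone argument. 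Your approach buys an explicit identification of the limit as $g(p)/\sqrt{5}$ and avoids the cone estimates entirely; it also does not need the sharper $O(\varphi^n)$ bound, only $O(((1+\varepsilon)\varphi)^n)$. Two small remarks: the bootstrap needs only finitely many refinements (since $\sigma_k$ decays doubly-exponentially past $\varphi^{-1}$), so the concern about ``infinitely many improvements'' you raise is not an issue; and the identity that actually makes the inductive margin work in Step~1 is $\varphi^2=\varphi+1<\varphi+2$ (so that $\varphi^2\varepsilon(1+\varepsilon)\le\delta_\varepsilon$), not $\varphi+2=\varphi\sqrt{5}$, though this does not affect the argument.
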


\begin{rem}
The last result tells us that if we start close enough to $\{z_2=0\} \subset \Omega'$, 
the dynamics is similar to the one we observe on restriction to 
this invariant hypersurface: 
either the starting point belongs to $W_{\Psi_\a}^s(0_{\C^3})$ and in this case its forward orbit 
converges to $0_{\C^3}$ with exponential speed, or the iterates escape to infinity with 
 speed exactly Fibonacci. We remark that when $|\alpha|$ is small, we can choose $\varepsilon >0$ reasonably large, 
so that the set $\Omega'$ becomes 
larger and larger. This is coherent with the fact that the smaller $|\alpha|$ is, the faster
we converge to the hypersurface $\{z_2=0\}$.
\end{rem}

We start by showing that the speed cannot be more than Fibonacci. 

\begin{lem}\label{proplessfib1}
Any point $p \in \Omega'$ grows at most with Fibonacci speed, that is,
there exists $C =C(p_0,p_1)>0$ such that for any $n \geq 0$,
\[
|P_\a^{(n)}(p)| \leq C \varphi^n.
\]
\end{lem}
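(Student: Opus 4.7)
The plan is to pass to the Fibonacci-normalized sequence $y_n := |P_\a^{(n)}(p)|\,\varphi^{-n}$ for $n \geq -1$ and to show that $(y_n)_{n}$ stays bounded. Starting from the triangle inequality version of the recursion $P_\a^{(n+1)} = P_\a^{(n)} + P_\a^{(n-1)} + (P_\a^{(n)})^q(\alpha^n p_2)^d$ and using the characterizing identity $\varphi^{-1}+\varphi^{-2}=1$ of the golden ratio, one immediately obtains
\[
y_{n+1} \leq \varphi^{-1}\,y_n + \varphi^{-2}\,y_{n-1} + c_n\,y_n^q,
\]
where $c_n := \nu^n\,|p_2|^d/\varphi$ and $\nu := \varphi^{q-1}|\alpha|^d$. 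The standing hypothesis $\eta = ((1+\varepsilon)\varphi)^q|\alpha|^d<\varphi$ rewrites as $(1+\varepsilon)^q\nu<1$, so in particular $\nu<1$ and the series $S := \sum_{n\geq 0} c_n = |p_2|^d/(\varphi(1-\nu))$ is finite.

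Next, I would set $M_n := \max(y_{-1}, y_0,\ldots, y_n)$ and exploit the fact that $\varphi^{-1}$ and $\varphi^{-2}$ sum to $1$: the first two terms on the right-hand side of the recursion for $y_{n+1}$ are then bounded by $M_n$, so
\[
M_{n+1} \leq M_n + c_n\,M_n^q.
\]
This is the engine of a bootstrap. So long as $M_n\leq R$ for a provisional constant $R$, summing gives
\[
M_n \leq M_0 + R^q\,\sum_{k=0}^{n-1} c_k \leq M_0 + R^q\,S,\qquad M_0 := \max(|p_0|,\varphi|p_1|).
\]
The bootstrap closes as soon as one can pick $R>M_0$ with $M_0+R^q\,S\leq R$; taking for instance $R := 2M_0$, this reduces to the algebraic inequality $2^q M_0^{q-1} S \leq 1$, which is of the form $(|p_0|+|p_1|)^{q-1}|p_2|^d \leq \mathrm{const}(\varepsilon)$. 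The hypothesis $p\in\Omega'$ is precisely an inequality of this shape, and by the standard continuation argument one then concludes $M_n\leq R$ for every $n\geq 0$, whence $|P_\a^{(n)}(p)|\leq C\varphi^n$ with $C := R$ depending only on $(p_0,p_1)$ (the value of $|p_2|$ being controlled by $(p_0,p_1)$ through the very inequality defining $\Omega'$).

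The main obstacle is the last step, namely guaranteeing that the algebraic smallness condition needed to close the bootstrap is actually implied by the condition $(|p_0|+|p_1|)^{q-1}|p_2|^d<\varphi\varepsilon$ defining $\Omega'$. If the absolute constants do not match on the nose, there are two clean fixes: either shrink $\Omega'$ by an absolute factor (which is harmless since it would remain an open neighborhood of $\{z_2=0\}$), or tighten the bootstrap by choosing $R := (1+\delta)M_0$ with $\delta>0$ small and using $\log(1+x)\leq x$ to convert the multiplicative bound $M_{n+1}\leq M_n(1+c_nM_n^{q-1})$ into the exponential estimate $M_n\leq M_0\exp(R^{q-1}S)$, which can be calibrated against $\varepsilon$ directly.
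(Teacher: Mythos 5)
Your recursion for $y_n := |P_\a^{(n)}(p)|\varphi^{-n}$ is correct, and the observation that $\varphi^{-1}+\varphi^{-2}=1$ lets you write $M_{n+1}\leq M_n + c_n M_n^q$ is a nice reduction. The gap you flag at the end, however, is genuine and not a matter of "absolute constants matching on the nose." Closing the bootstrap — even in the optimized form $M_0\exp(R^{q-1}S)\leq R$ — requires a condition of the type $M_0^{q-1}S \leq \tfrac{1}{(q-1)e}$, and tracing constants this amounts to $\varepsilon \lesssim \tfrac{1-\nu}{(q-1)e\,\varphi^{q-1}}$ with $\nu=\varphi^{q-1}|\a|^d$. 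On the other hand, the standing hypothesis $((1+\varepsilon)\varphi)^q|\a|^d<\varphi$ only forces $\varepsilon < \nu^{-1/q}-1 \approx \tfrac{1-\nu}{q}$, which is strictly weaker; for $\nu$ close to $1$ (i.e.\ $|\alpha|$ close to $\varphi^{(1-q)/d}$) and $\varepsilon$ chosen near its allowed supremum, the bootstrap will not close. Of your two proposed fixes, shrinking $\Omega'$ changes the statement being proved, and the multiplicative/exponential refinement yields a smallness requirement of the same order, so neither resolves the gap.

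The issue is intrinsic to a one-step bootstrap: keeping $y_n$ bounded by a constant $R$ from the start is too tight. The paper avoids this by a two-step argument: it first proves a deliberately \emph{coarser} bound $|P_\a^{(n)}(p)| \leq \widetilde C\,((1+\varepsilon)\varphi)^n$ with $\widetilde C=|p_0|+|p_1|$, whose induction closes without any additional smallness because the extra factor $(1+\varepsilon)^n$ absorbs the error (the key check being $\eta:=((1+\varepsilon)\varphi)^q|\a|^d<(1+\varepsilon)\varphi$, which follows from $\eta<\varphi$). This coarse bound then controls the nonlinear term as $|(P_\a^{(n)})^q(\a^n p_2)^d|\leq \widetilde C\varphi\varepsilon\,\eta^n$ with $\eta<\varphi$, and a telescoping sum weighted by powers of $\varphi$ (the same trick as in Lemma~\ref{lemmarecphi}) yields the clean bound $|P_\a^{(n)}(p)|\leq C\varphi^n$. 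If you insert that first coarse step before your bootstrap, your argument goes through without any extra hypothesis on $\varepsilon$.
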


\begin{proof}
Let $\widetilde C=\widetilde C(p_0,p_1):=|p_0|+|p_1|$ and $\varepsilon >0$ be chosen as explained above. We first show that for every $n \geq 0$,
\[
|P_\a^{(n)}(p)| \leq \widetilde C ((1+\varepsilon)\varphi)^n.
\]
The result is clearly true for $n=0$, and for $n=1$ we have
\[
|P_\alpha^{(1)}(p)|\leq |p_0| + |p_1| + |p_0|^q |p_2|^d\leq \widetilde C (1 +\widetilde C^{q-1} |p_2|^d) \leq \widetilde C (1+\varepsilon) \varphi.
\] 
Suppose that it holds for $n-1$ and $n$, that is
\[
 |P_\a^{(n-1)}(p)| \leq  \widetilde C((1+\varepsilon)\varphi)^{n-1},\quad \quad |P_\a^{(n)}(p)| \leq \widetilde C((1+\varepsilon)\varphi)^n.
\]
We then have 
\begin{align*}
|P_\a^{(n+1)}(p)|&\leq |P_\a^{(n)}(p)| + |P_\a^{(n-1)}(p)| +|(P_\a^{(n)}(p))^q (\alpha^{n}p_2)^d|\\
&\leq \widetilde C\big((1+\varepsilon)\varphi\big)^n +\widetilde C \big((1+\varepsilon)\varphi\big)^{n-1} 
+ \widetilde C^q |p_2|^d \big(((1+\varepsilon)\varphi)^q |\a|^d \big)^n\\
&\leq \widetilde C (1+\varepsilon)^n \varphi^{n+1} + \widetilde C \varphi \varepsilon \eta^n\\
&\leq \widetilde C (1+\varepsilon)^n \varphi^{n+1} + \widetilde C \varepsilon (1+\varepsilon)^n \varphi^{n+1}\\
&= \widetilde C \big((1+\varepsilon) \varphi\big)^{n+1},
\end{align*}
which concludes the induction. 

Using this fact, we obtain a good control on the non-linear term:
for any $n \geq 0$,
\[
|(P_\a^{(n)}(p))^q (\a^{n}p_2)^d| \leq \widetilde C^q |p_2|^d \big(((1+\varepsilon)\varphi)^q |\a|^d \big)^n= C_0 \eta^n,
\]
where $C_0=C_0(p_0,p_1):=\widetilde C \varphi \varepsilon$.
For every $n \geq 0$, we have
\begin{align*}
|P_\a^{(n+1)}(p)| &\leq |P_\a^{(n)}(p)| + |P_\a^{(n-1)}(p)| +|(P_\a^{(n)}(p))^q (\a^{n}p_2)^d|\\
&\leq |P_\a^{(n)}(p)| + |P_\a^{(n-1)}(p)| + C_0 \eta^n.
\end{align*}
Thanks to the same trick as in the proof of Lemma \ref{lemmarecphi}, we obtain:
\begin{equation*}
|P_\a^{(n+1)}(p)| + (\varphi -1) |P_\a^{(n)}(p)| \leq \varphi^n \left(\varphi |p_0| + |p_1| + 
C_0 \sum\limits_{j=0}^{n} \left(\frac{\eta}{\varphi}\right)^j\right).
\end{equation*}
Since $\eta < \varphi$, we can set 
$C=C(p_0,p_1):=\varphi^{-1}\left(\varphi|p_0|+|p_1|+C_0\sum\limits_{j=0}^{+\infty}\left(\frac{\eta}{\varphi}\right)^j\right)$. 
We then get: for every $ n\geq 0$,
\[
|P_\a^{(n)}(p)| \leq C \varphi^n.
\]
\end{proof}

The proof of Proposition \ref{Fibonaccigrowth} 
is the combination of Lemma \ref{proplessfib1} and of the next 
result.

\begin{lem}\label{lemutile}
Assume $0<\vert\alpha\vert<\varphi^{(1-q)/d}$ and take 
$p \in \C^3\smallsetminus W_{\Psi_\a}^s(0_{\C^3})$ with speed less than Fibonacci, i.e.,
 there exists $C >0$
 such that for every $n \geq 0$, $|P_\a^{(n)}(p)| \leq C \varphi^n$. Then $p$ goes to
 infinity with speed exactly Fibonacci: $\big(P_{\a}^{(n)}(p) \varphi^{-n}\big)_{n\geq 0}$ 
converges and we have
\[
\lim_{n \to +\infty}P_{\a}^{(n)}(p) \varphi^{-n} \in \C^*.
\] 
\end{lem}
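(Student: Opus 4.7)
The strategy is to study the normalized iterates $u_n := P_\a^{(n)}(p)\varphi^{-n}$. Rewriting the identity of Lemma \ref{lemmarecphi} in terms of $u_n$ yields the linear recurrence
\[
u_{n+1} = -\varphi^{-2}\, u_n + \varphi^{-1} g_n(p).
\]
Under the Fibonacci bound $|P_\a^{(n)}(p)|\leq C\varphi^n$ and the hypothesis $|\alpha|<\varphi^{(1-q)/d}$, each summand of $g(p)$ satisfies $|(P_\a^{(j)}(p))^q \varphi^{-j}\alpha^{jd}|\leq C^q\beta^j$ with $\beta := \varphi^{q-1}|\alpha|^d<1$, so the series defining $g(p)$ converges absolutely and $g_n(p)\to g(p)$.

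Since the contraction factor $-\varphi^{-2}$ has modulus less than $1$, a standard argument (subtracting the fixed point) shows that $u_n$ converges to $L := \varphi^{-1}g(p)/(1+\varphi^{-2}) = g(p)/\sqrt{5}$ (using $\varphi+\varphi^{-1}=\sqrt{5}$). This already gives the existence of $\lim_{n\to +\infty} P_\a^{(n)}(p)\varphi^{-n}=L$. If $g(p)\neq 0$, then $L\neq 0$ and the lemma is proved.

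To rule out the case $g(p)=0$, I will argue by contradiction, showing that $g(p)=0$ forces $p\in W_{\Psi_\a}^s(0_{\C^3})$. Indeed, if $g(p)=0$ the recurrence reduces to $u_{n+1}=-\varphi^{-2}u_n-\varphi^{-1}p_2^d r_n(p)$, and the crude bound $|r_n(p)|=O(\beta^n)$ combined with a standard linear-recurrence estimate yields $|u_n|=O(\max(\varphi^{-2n},\beta^n))$. One then bootstraps: an improved estimate $|u_n|=O(\gamma^n)$ with $\gamma<1$ immediately gives $|r_n(p)|=O((\gamma^q\beta)^n)$, which feeds back through the recurrence to yield $|u_n|=O(\max(\varphi^{-2n},(\gamma^q\beta)^n))$. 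Because $\beta<1$ and $q\geq 2$, the iteration $\gamma\mapsto\max(\varphi^{-2},\gamma^q\beta)$ on $[0,1)$ strictly contracts toward $\varphi^{-2}$ until that value is reached, so after finitely many steps one obtains $|u_n|=O(\varphi^{-2n})$, whence $|P_\a^{(n)}(p)|=O(\varphi^{-n})\to 0$. Combined with $\alpha^n p_2\to 0$ (since $|\alpha|<1$), this gives $\Psi_\a^n(p)\to 0_{\C^3}$, i.e., $p\in W_{\Psi_\a}^s(0_{\C^3})$, contradicting our hypothesis.

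The main obstacle is precisely this bootstrap: one must carefully track how geometric decay of $u_n$ and $r_n$ feed into each other, and verify that the iterative procedure actually saturates at $\varphi^{-2}$ in finitely many steps rather than stalling at an intermediate rate. Once that is in place, the conclusion follows cleanly from the dichotomy $g(p)\neq 0$ versus $g(p)=0$.
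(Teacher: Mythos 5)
Your proof is correct, and it takes a genuinely different route from the paper. The paper works with the $2\times 2$ matrix cocycle $A(\Psi_\a^j(p))$, shows it converges exponentially to $M_0=\begin{pmatrix}1&1\\1&0\end{pmatrix}$, and then invokes a cone-field argument around the unstable direction $\Delta_\varphi$: since $p\notin W_{\Psi_\a}^s(0_{\C^3})$, the vectors $A_n(p)\cdot(p_0,p_1)$ eventually enter and stay in a shrinking family of cones, each application of $A(\Psi_\a^j(p))$ expanding by a factor $(1\pm O(\eta^j))\varphi$; this yields the lower bound $\|(P_\a^{(n)},P_\a^{(n-1)})\|\geq C'\varphi^n$ and the ratio limit $P_\a^{(n)}/P_\a^{(n-1)}\to\varphi$, from which the convergence of $P_\a^{(n)}\varphi^{-n}$ and the nonvanishing of the limit follow. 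You instead read Lemma \ref{lemmarecphi} directly as a contracting scalar recurrence $u_{n+1}=-\varphi^{-2}u_n+\varphi^{-1}g_n(p)$ for $u_n=P_\a^{(n)}(p)\varphi^{-n}$: convergence of $u_n$ to $g(p)/\sqrt5$ is then automatic, and the only work is to rule out $g(p)=0$, which you do by the self-improving bootstrap on the decay rate $\gamma$ of $u_n$, terminating in finitely many steps at $\gamma=\varphi^{-2}$ because each iteration multiplies $\gamma$ by at most $\beta<1$. The termination, the feedback from $|u_n|=O(\gamma^n)$ to $|r_n|=O((\gamma^q\beta)^n)$, and the conclusion $P_\a^{(n)}(p)\to 0\Rightarrow p\in W_{\Psi_\a}^s(0_{\C^3})$ are all sound. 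Your route is more elementary (no cones, no hyperbolicity of nearby matrices needed) and more explicit (it identifies the limit as $g(p)/\sqrt5$, which is exactly the quantity $\mathcal{Z}$ detects), at the cost of the bootstrap; the paper's cone argument is more geometric and slots into the cocycle framework used throughout, and avoids having to discuss the $g(p)=0$ case separately because the expansion estimate directly produces the lower bound.
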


\begin{proof}
Take $p \in \C^3 \smallsetminus W_{\Psi_\a}^s(0_{\C^3})$ such that for every 
$n \geq 0$, $|P_\a^{(n)}(p)| \leq C \varphi^n$, $C >0$. 
We first show that $p$ goes to infinity with speed 
at least Fibonacci too, i.e., there exists $C'=C'(p_0,p_1)>0$ such 
that for every $n \geq 0$, $|P_\a^{(n)}(p)| 
\geq C' \varphi^n$. 
Recall that if $(z_0,z_1,z_2) \in \C^3$, we denote
\[
A(z_0,z_1,z_2):=\begin{pmatrix}
1+z_0^{q-1}z_2^d & 1\\
1 & 0
\end{pmatrix}, 
\]
and that for $n \geq 0$, $\big(P_\a^{(n)}(p),P_\a^{(n-1)}(p)\big)=A_n(p) \cdot (p_0,p_1)$, where
\[
A_n(p):=A(\Psi_\a^{n-1}(p)) \cdot A(\Psi_\a^{n-2}(p))\ \dots\ A(\Psi_\a(p)) \cdot A(p).
\]
Note that for every $j \geq 0$,
\begin{equation}\label{matrixcocycle}
A(\Psi_\a^j(p))=\begin{pmatrix}
1+(P_\a^{(j)}(p))^{q-1}\a^{jd} p_2^d & 1\\
1 & 0
\end{pmatrix}.
\end{equation}
Since $|P_\a^{(j)}(p)| 
\leq C \varphi^j$, we see that 
\begin{equation}\label{estiiterates}
|(P_\a^{(j)}(p))^{q-1}\a^{jd} p_2^d| \leq C^{q-1}|p_2|^d
(\varphi^{q-1}|\a|^d)^j \leq \nu \eta^j,
\end{equation}
where $\eta:=\varphi^{q-1}|\a|^d<1$ and $\nu:=C^{q-1}|p_2|^d \geq 0$. Set
$M_0:=\begin{pmatrix}
1 & 1\\
1 & 0
\end{pmatrix}$. If $\varepsilon >0$, let us consider 
$B_{\varepsilon}(M_0):=\big\{M\in \mathfrak{M}_2(\C)\ \vert \ \|M-M_0\|_\infty< \varepsilon\big\}$. 
We see from (\ref{matrixcocycle}) and (\ref{estiiterates}) that for every $j \geq 0$,
 $A(\Psi_\a^{j}(p))\in B_{\nu\eta^j}(M_0)$. For $\varepsilon >0$ small, 
 every matrix $M \in B_{\varepsilon}(M_0)$ is hyperbolic with eigenvalues close to 
$\varphi$ and $\varphi'$; moreover we can choose a family of cones
$(\mathscr{C}_{\varepsilon})_{\varepsilon>0}$ around $\Delta_\varphi$ satisfying the following: 
there exist $C_0,C_1 >0$ such that for each $M \in B_{\varepsilon}(M_0)$,
every vector $v \in \mathscr{C}_\varepsilon$
will be expanded by a factor close to $\varphi$: 
\[
(1-C_0 \varepsilon)\varphi \|v\| \leq \|M\cdot v\| \leq (1+C_1 \varepsilon)\varphi \|v\|.
\]
Since $p \not\in W_{\Psi_\a}^s(0_{\C^3})$, the iterates of $(p_0,p_1)$ are expanded 
and accumulate on the unstable 
space $\Delta_\varphi=\big\{(\varphi z,z)\ \vert\ z \in \C\big\}$ of $M_0$. In fact the angle 
$\angle(\Delta_\varphi, A_j(p) \cdot (p_0,p_1))$
decreases exponentially fast, and
we can assume that for some $n_0 \geq 0$ and for every $j \geq n_0$,
$A(\Psi_\a^{j}(p))\in B_{\nu\eta^j}(M_0)$ maps 
$\mathscr{C}_{\nu \eta^j}$ to $\mathscr{C}_{\nu\eta^{j+1}}$.
We deduce that for every $n \geq n_0$, 
\[
\prod\limits_{j=n_0}^{n-1}(1-C_0 \nu \eta^j) \|A_{n_0}(p) \cdot (p_0,p_1)\| \leq \frac{\|A_n(p) 
\cdot (p_0,p_1)\|}{\varphi^{n-n_0}} \leq \prod_{j=n_0}^{n-1}(1+C_1 \nu \eta^j) \|A_{n_0}(p) \cdot (p_0,p_1)\|.
\]
Let $C':=\varphi^{n_0}\displaystyle\prod\limits_{j=n_0}^{+\infty}(1-C_0 \nu \eta^j) \|A_{n_0}(p) \cdot (p_0,p_1)\|>0$. 
We have thus obtained: for every $n \geq 0$,  
\[
\|(P_\a^{(n)}(p),P_\a^{(n-1)}(p))\| \geq C'\varphi^n.
\]

Now, we note that $p$ belongs to $\mathcal{D}$, the domain of definition of 
the series $g$ introduced earlier. Indeed, for any $j \geq 0$,
$|P_\a^{(j)}(p)|^q \varphi^{-j} |\alpha|^{jd} \leq C^q (\varphi^{q-1}|\a|^d)^j$ and 
$\varphi^{q-1}|\a|^d < 1$. 
We have shown that the sequence 
$(P_{\a}^{(n)}(p),P_{\a}^{(n-1)}(p))_n$ accumulates on the unstable 
direction $\big\{(\varphi z,z)\ \vert\ z \in \C\big\}$ of $M_0$; therefore, we get
\begin{equation}\label{groww1}
\lim\limits_{n \to +\infty} \frac{P_{\a}^{(n)}(p)}{P_{\a}^{(n-1)}(p)}=\varphi.
\end{equation}
Recall that for $n \geq 0$, $g_n(p):=(P_{\a}^{(n+1)}(p)+\varphi^{-1}P_{\a}^{(n)}(p))\varphi^{-n}$. 
We have seen that $p \in \mathcal{D}$, and then, $(g_n(p))_n$ converges. From (\ref{groww1})
 we deduce that for every $n \geq 0$,
\[
g_n(p)=\left(\frac{P_{\a}^{(n+1)}(p)}{P_{\a}^{(n)}(p)}+\varphi^{-1}\right)P_{\a}^{(n)}(p) \varphi^{-n}\sim \left(\varphi+\varphi^{-1}\right)P_{\a}^{(n)}(p) \varphi^{-n}.
\]
This implies that $(P_{\a}^{(n)}(p) \varphi^{-n})_{n \geq 0}$ converges. 
But we also know from what precedes that $\lim\limits_{n\to +\infty}  |P_{\a}^{(n)}(p) \varphi^{-n}|>0$, 
so that $\lim\limits_{n\to +\infty}  P_{\a}^{(n)}(p) \varphi^{-n}\in \C^*$, which concludes.
\end{proof}

Recall that we take $\varepsilon>0$ small enough so that 
$((1+\varepsilon)\varphi)^q |\a|^d < \varphi$, and that 
$\Omega':=\big\{(p_0,p_1,p_2)\in\mathbb{C}^3\,\vert\, (|p_0|+|p_1|)^{q-1} |p_2|^d < \varphi \varepsilon \big\}$.
We deduce from Proposition \ref{Fibonaccigrowth} that
\begin{equation}\label{eqpot}
\restriction{G_{\Psi_\a}^+}{\Omega'}=0.
\end{equation}
Indeed, the forward iterates of any point 
in $\Omega'$ grow at most with Fibonacci speed as we have seen. 
In particular, the set $\big\{p\in \C^3 \ \vert \ G_{\Psi_\a^+}(p)=0\big\}$ has non-empty interior. 
Set $\delta:=(\varphi \varepsilon)^{1/(q-1)}$ and define the open ball 
\[
B_\delta:=\big\{\widetilde{p}=(p_0,p_1) \in \C^2\ \vert \ \| \widetilde{p}\|_1=|p_0|+|p_1|< \delta\big\}.
\]
Recall that $l=\frac{d}{q-1}$ and that $h\colon(z_0,z_1,z_2)\mapsto (z_0 z_2^l,z_1 z_2^l)$. Remark 
that $h(\Omega') \subset B_\delta$. Indeed, if $p=(p_0,p_1,p_2) \in \Omega'$, then
\[
\|h(p)\|_1=|p_0p_2^l| + |p_1p_2^l| = \left((|p_0|+|p_1|)^{q-1} |p_2|^d\right)^{1/(q-1)} < (\varphi \varepsilon)^{1/(q-1)}=\delta.
\]
Conversely, if $(p_0,p_1) \in B_\delta$, 
then $(p_0,p_1)=h(p_0,p_1,1)$ with  $(p_0,p_1,1)\in \Omega'$, so that $h(\Omega') =B_\delta$ in fact. 
Since $G_{\Psi_\a}^+=G_{\phi_\a}^+\circ h$, 
we deduce 
from (\ref{eqpot}) that 
\[
\restriction{G_{\phi_\a}^+}{B_\delta}=0.
\]
But as we have seen, $K_{\phi_\a}^+=\big\{(p_0,p_1) \in \C^2\ \vert\ G_{\phi_a}^+(p_0,p_1)=0\big\}$. 
We conclude that for $\vert\alpha\vert<\varphi^{(1-q)/d}$, any point $(p_0,p_1) \in B_\delta$ has 
bounded forward orbit under $\phi_\a$. Actually we can say more. Recall that 
$\phi_\a=\alpha^l (z_0+z_1+z_0^q,z_0)$. We see that $0_{\C^2}$ is a sink of 
$\phi_\a$; indeed, the largest eigenvalue of the Jacobian is $\alpha^l \varphi$, 
which is stricly smaller than $1$ from the assumption 
we made on $\alpha$.
For any $p \in \C^3$ and $n \geq 0$,
\begin{equation}\label{eqconjj}
\theta \circ \Psi_\a^n(p)= \big(P_\a^{(n)}(p) (\a^{n}p_2)^l,P_\a^{(n-1)}(p)(\a^{n}p_2)^l,\a^n p_2\big)=\big(\phi_\a^n \circ h (p),\alpha^n p_2\big).
\end{equation}
If $p \in \Omega'$, we know from Proposition \ref{Fibonaccigrowth} 
that there exists $C >0$ such that for any $n \geq 0$, $|P_\a^{(n)}(p)| \leq C \varphi^n$. 
But then, $|P_\a^{(n)}(p) (\alpha^{n}p_2)^l| \leq C |p_2|^l (\varphi^{q-1} |\alpha|^d)^{n/(q-1)}$, 
and $\varphi^{q-1} |\alpha|^d < 1$, so we deduce from (\ref{eqconjj}) and the equality 
$B_\delta=h(\Omega')$ that any point in $B_\delta$ 
goes to $0_{\C^2}$ by forward iteration of $\phi_\a$; equivalently, 
the basin of attraction $W_{\phi_\a}^s(0_{\C^2})$ 
of the sink $0_{\C^2}$ contains the ball
$B_\delta$. Recall also that it is a general fact that for a sink $p$ of $\phi_\a$, 
$W_{\phi_\a}^s(p)$ is biholomorphic to $\C^2$.\\
 
In the following, we will see how the previous results enable us to give a description of the 
dynamics of $\Psi_\a$ in the case where $0<\vert\alpha\vert<\varphi^{(1-q)/d}$. 
Let $p \in \C^3$. We have shown previously that
 $K_{\Psi_\a}^+=W_{\Psi_\a}^s(0_{\C^3})$, so 
assume that the forward orbit of $p$ under $\Psi_\a$ is not bounded. There are two possibilities:
\begin{itemize}
\smallskip
\item either $G_{\Psi_\a}^+(p)>0$ and the iterates of $p$ go to infinity with maximal speed; in 
this case, we also have $G_{\phi_\a}^+(h(p))=G_{\Psi_\a}^+(p)>0$;
\smallskip
\item or $G_{\Psi_\a}^+(p)=0$ and $G_{\phi_\a}^+(h(p))=G_{\Psi_\a}^+(p)=0$ too. But then we know from 
the general properties of $\phi_\a$ that $h(p) \in K_{\phi_\a}^+$. From 
 \S \ref{subsgeneralrem}, we also have
 $K_{\phi_\a}^+=W_{\phi_\a}^s(0_{\C^2})$.  
It follows from 
(\ref{eqconjj}) that $\lim\limits_{n \to +\infty} |P_\a^{(n)}(p) (\a^{n}p_2)^l |$ exists and vanishes. 
Since $l=d/(q-1)$, we deduce that $\lim\limits_{n \to +\infty} |P_\a^{(n)}(p)|^{q-1} |\a^{n}p_2|^{d} =0$, 
hence $\lim\limits_{n \to +\infty} A(\Psi_\a^n(p)) = M_0=\begin{pmatrix}
1 & 1\\
1 & 0
\end{pmatrix}$
where $A$ is the cocycle introduced earlier. Reasoning as before, and since by assumption 
$p \not \in W_{\Psi_\a}^s(0_{\C^3})$, we conclude accordingly that $p$ escapes to infinity with 
Fibonacci speed. 
\end{itemize}

We have thus shown:

\begin{prop}\label{descrdd}
When $0<\vert\alpha\vert<\varphi^{(1-q)/d}$, the point $0_{\C^3}$ is a saddle fixed point of 
$\Psi_\a$ of index $2$, and $J_{\Psi_\a}^+:=\partial K_{\Psi_\a}^+=\overline{ W_{\Psi_\a}^s(0_{\C^3})}$. 
Moreover, 
$\big\{p\in \C^3\ \vert\ G_{\Psi_\a}^+(p)=0\big\}=h^{-1}(W_{\phi_\a}^s(0_{\C^2}))=\Omega'' \sqcup W_{\Psi_\a}^s(0_{\C^3})$, 
where $\Omega''$ has non-empty interior (it contains the set $\Omega'\smallsetminus W_{\Psi_\a}^s(0_{\C^3})$), 
and the forward orbit of points that belong to it goes to infinity with Fibonacci speed. 
Moreover, $W_{\phi_\a}^s(0_{\C^2})$ 
is biholomorphic to $\C^2$, and 
\[
K_{\phi_\a}^+=W_{\phi_\a}^s(0_{\C^2})=\big\{\widetilde p\in \C^2\ \vert\ G_{\phi_\a}^+(\widetilde p)=0\big\}.
\]
We summarize this as follows:
\begin{equation}
\begin{array}{rcl}
& h & \\
\{z_2=0\} & \to & \{0_{\C^2}\};\\
K_{\Psi_\a}^+=W_{\Psi_\a}^s(0_{\C^3}) & \to & K_{\phi_\a}^+=W_{\phi_\a}^s(0_{\C^2});\\
\Omega'' & \to & W_{\phi_\a}^s(0_{\C^2});\\
\big\{p\in \C^3\ \vert\ G_{\Psi_\a}^+(p)>0\big\} & \to & \big\{\widetilde p\in \C^2\ \vert\ G_{\phi_\a}^+(\widetilde p)>0\big\}.
\end{array}
\end{equation}
\end{prop}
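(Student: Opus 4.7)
Most of the ingredients have been assembled in the preceding pages, and my plan is to stitch them together.

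The local dynamics at $0_{\C^3}$ is immediate. The differential $D\Psi_\a(0_{\C^3})$ is block-diagonal, with the Fibonacci matrix $\begin{pmatrix}1 & 1\\ 1 & 0\end{pmatrix}$ acting on the first two coordinates and the scalar $\a$ on the third, so its spectrum is $\{\varphi,\varphi',\a\}$; under $|\a|<\varphi^{(1-q)/d}<1$, only $\varphi$ has modulus $>1$, making $0_{\C^3}$ a hyperbolic saddle of stable index $2$. Corollary~\ref{corvarstable} then yields $K^+_{\Psi_\a}=W^s_{\Psi_\a}(0_{\C^3})$; since $W^s_{\Psi_\a}(0_{\C^3})\simeq \C^2$ is a properly embedded two-dimensional complex submanifold of $\C^3$, it has empty interior, so $J^+_{\Psi_\a}=\partial K^+_{\Psi_\a}=\overline{W^s_{\Psi_\a}(0_{\C^3})}$.

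For the description of $\{G^+_{\Psi_\a}=0\}$, I would use the factorization $G^+_{\Psi_\a}=G^+_{\phi_\a}\circ h$ from Proposition~\ref{propgreen} together with the Hénon-map identity $K^+_{\phi_\a}=\{G^+_{\phi_\a}=0\}$ recalled in~\S\ref{sec:lost}, giving $\{G^+_{\Psi_\a}=0\}=h^{-1}(K^+_{\phi_\a})$. Under our hypothesis the Jacobian of $\phi_\a$ at $0_{\C^2}$ has largest eigenvalue $\a^l\varphi$ of modulus $<1$, so $0_{\C^2}$ is a sink, and the paragraphs immediately preceding the statement have exhibited the explicit ball $B_\delta\subset W^s_{\phi_\a}(0_{\C^2})$. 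Combining this with the discussion of~\S\ref{subsgeneralrem} -- in which $K^+_{\phi_\a}$ is identified with $W^s_{\phi_\a}(0_{\C^2})$, itself biholomorphic to $\C^2$ by the Fatou--Bieberbach construction for the basin of a holomorphic sink -- closes the chain of identifications. Setting $\Omega'':=h^{-1}(W^s_{\phi_\a}(0_{\C^2}))\smallsetminus W^s_{\Psi_\a}(0_{\C^3})$ makes the disjoint decomposition definitional; its interior is non-empty because by~(\ref{eqpot}) it contains the open set $\Omega'\smallsetminus W^s_{\Psi_\a}(0_{\C^3})$.

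Finally, for the Fibonacci-speed assertion on $\Omega''$, I would revisit the cocycle argument of Lemma~\ref{lemutile}. For $p\in \Omega''$, the conjugacy $\theta\circ\Psi_\a^n(p)=(\phi_\a^n\circ h(p),\a^n p_2)$ combined with $h(p)\in W^s_{\phi_\a}(0_{\C^2})$ forces $P^{(n)}_\a(p)(\a^n p_2)^l\to 0$ exponentially fast (since $0_{\C^2}$ is a sink of $\phi_\a$); raising to the power $q-1$ and using $l(q-1)=d$ gives $(P^{(n)}_\a(p))^{q-1}\a^{nd}\to 0$ exponentially, hence summably, so the cocycle matrices $A(\Psi_\a^n(p))$ converge to the Fibonacci matrix $M_0$ at summable speed. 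The hyperbolic cone-field argument of Lemma~\ref{lemutile} -- which uses $p\notin W^s_{\Psi_\a}(0_{\C^3})$ to rule out alignment with the stable direction $\Delta_{\varphi'}$ -- together with the convergent-series trick of Lemma~\ref{proplessfib} then yields that $P^{(n)}_\a(p)\varphi^{-n}$ converges to an element of $\C^*$. The main subtlety in this plan is the Hénon-side identity $K^+_{\phi_\a}=W^s_{\phi_\a}(0_{\C^2})$, which reflects the fact that in the dissipative sink regime under consideration the basin of the sink $0_{\C^2}$ exhausts the filled Julia set of~$\phi_\a$.
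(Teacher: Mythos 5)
Your proposal is correct and follows essentially the same route as the paper: you read off the spectrum of the Jacobian at $0_{\C^3}$, invoke Corollary \ref{corvarstable} and the empty interior of the stable manifold to identify $J_{\Psi_\a}^+$, factor $G_{\Psi_\a}^+$ through $h$ and combine the sink identification $K_{\phi_\a}^+=W_{\phi_\a}^s(0_{\C^2})$ with the explicit ball $B_\delta=h(\Omega')$ to obtain the decomposition of $\{G_{\Psi_\a}^+=0\}$, and re-run the cone-field argument of Lemma~\ref{lemutile} for the Fibonacci-speed conclusion on $\Omega''$. The only mild refinement over the paper's "reasoning as before" is that you spell out why the cocycle $A(\Psi_\a^n(p))$ converges to $M_0$ at a summable rate (via the exponential contraction of the sink transported through $h$), which is precisely the hypothesis the cone argument needs.
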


Thanks to the last statement, 
we now give an alternative 
description of the stable manifold $W_{\Psi_\a}^s(0_{\C^3})$ in terms 
of the set $\mathcal{Z}$ of zeros of the series $g$ introduced previously.

\begin{prop}\label{lemstablemfd}
Assume $0<|\a| < \varphi^{(1-q)/d}$. Set 
$\mathcal{V}:=\big\{p\in \C^3\ \vert\ G_{\Psi_\a}^+(p)=0\big\}=\Omega'' \sqcup W_{\Psi_\a}^s(0_{\C^3})$. 
Then $\mathcal{V}$ coincides with  
the domain of definition $\mathcal{D}$ of the series 
$g$ introduced earlier. Moreover, we have the following parametrization of the stable manifold:
\[
W_{\Psi_\a}^s(0_{\C^3})= \mathcal{Z}=\big\{p \in \mathcal{V}\ \vert\ g(p)=0\big\}=\bigcup\limits_{n \geq 0} \Psi_\a^{-n}(\Omega' \cap \mathcal{Z}).
\]
\end{prop}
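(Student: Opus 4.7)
The proposition consists of three claims: $\mathcal{V}=\mathcal{D}$, $\mathcal{Z}=W_{\Psi_\a}^s(0_{\C^3})$, and the orbit-theoretic parametrization $\mathcal{Z}=\bigcup_{n\geq 0}\Psi_\a^{-n}(\Omega'\cap\mathcal{Z})$. The plan is to prove them in that order, using as crucial inputs the rigid dichotomy $\mathcal{V}=\Omega''\sqcup W_{\Psi_\a}^s(0_{\C^3})$ from Proposition \ref{descrdd} and the exact Fibonacci asymptotic along $\Omega''$ furnished by Lemma \ref{lemutile}.

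For $\mathcal{V}=\mathcal{D}$ I would argue by double inclusion. If $p\in\mathcal{V}$, Proposition \ref{descrdd} gives $|P_\a^{(n)}(p)|=O(\varphi^n)$ (iterates either decay exponentially along $W^s$ or escape with Fibonacci speed along $\Omega''$), whence
\[
|P_\a^{(j)}(p)|^q\,\varphi^{-j}|\alpha|^{jd}=O\big((\varphi^{q-1}|\alpha|^d)^j\big),
\]
a convergent geometric series under the hypothesis $|\alpha|<\varphi^{(1-q)/d}$; hence $p\in\mathcal{D}$. Conversely, if $p\in\mathcal{D}$ the general term of this series tends to $0$, yielding a purely exponential bound $|P_\a^{(j)}(p)|\leq e^{Cj}$ for some $C>0$ and $j$ large; this forces $G_{\Psi_\a}^+(p)=0$ since $q>1$, so $p\in\mathcal{V}$.

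For $\mathcal{Z}=W^s$ the inclusion $W^s\subset\mathcal{Z}$ is already contained in Corollary \ref{unbounded}. For the opposite inclusion, let $p\in\mathcal{Z}$; then $p\in\mathcal{D}=\mathcal{V}$, so by Proposition \ref{descrdd} either $p\in W^s$ (done) or $p\in\Omega''$. I would rule out the latter via Lemma \ref{lemutile}, which provides $L:=\lim_n P_\a^{(n)}(p)\varphi^{-n}\in\C^*$; a direct computation
\[
g(p)=\lim_n g_n(p)=\lim_n\big(P_\a^{(n+1)}(p)+\varphi^{-1}P_\a^{(n)}(p)\big)\varphi^{-n}=L(\varphi+\varphi^{-1})=L\sqrt{5}
\]
yields $g(p)\neq 0$, contradicting $p\in\mathcal{Z}$.

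Finally, for the orbit parametrization, the inclusion $\bigcup_{n\geq 0}\Psi_\a^{-n}(\Omega'\cap\mathcal{Z})\subset\mathcal{Z}$ is immediate from the $\Psi_\a^{\pm 1}$-invariance of $\mathcal{Z}=W^s$. The reverse inclusion uses that for $p\in W^s$ the iterates $\Psi_\a^n(p)$ converge to $0_{\C^3}$, which lies in the open set $\Omega'$ (its defining strict inequality holds trivially at the origin), so some iterate $\Psi_\a^{n_0}(p)$ lands in $\Omega'\cap\mathcal{Z}$. The real difficulty in the whole argument is the direction $\mathcal{Z}\subset W^s$, i.e.\ turning the vanishing of an analytic series into exponential decay of forward iterates. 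This is handled only through the rigidity from Proposition \ref{descrdd}, which excludes any intermediate growth rate between exponential decay and Fibonacci escape; and this in turn requires the preliminary identification $\mathcal{V}=\mathcal{D}$ to even place $\mathcal{Z}$ inside $\mathcal{V}$.
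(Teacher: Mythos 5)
Your proof is correct and follows essentially the same route as the paper's: double inclusion for $\mathcal{V}=\mathcal{D}$ using the $O(\varphi^j)$ growth from Proposition \ref{descrdd} in one direction and an elementary bound on the general term in the other, exclusion of $\Omega''$ from $\mathcal{Z}$ via the limit $L=\lim P_\a^{(n)}(p)\varphi^{-n}\in\C^*$ and the identity $g(p)=\sqrt{5}\,L$, and the orbit parametrization from $0_{\C^3}\in\Omega'$ together with invariance. The only cosmetic difference is that for $\mathcal{D}\subset\mathcal{V}$ you argue directly (boundedness of the series terms forces at most exponential growth of $|P_\a^{(n)}(p)|$, hence $G^+_{\Psi_\a}(p)=0$ since $q>1$), whereas the paper states the contrapositive; these are the same estimate.
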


\begin{proof}
Let $p \in \mathcal{V}$. 
We know from Proposition \ref{descrdd}
that there exists $C>0$ such that for any 
$j\geq 0$, $|P_\a^{(j)}(p)| \leq C \varphi^j$. 
Then  $p$ belongs to the domain of definition of 
$g$ since in this case,
$|P_\a^{(j)}(p)|^q \varphi^{-j} |\alpha|^{jd} \leq C^q (\varphi^{q-1}|\a|^d)^j$ and 
$\varphi^{q-1}|\a|^d < 1$. It is also clear that if $G_{\Psi_\a}^+(p)>0$, then $p \not\in \mathcal{D}$.

Recall that for any $n\geq 0$,
\begin{equation}\label{groww}
g_n(z)=\varphi z_0 + z_1 + z_2^d \sum\limits_{j=0}^{n} \left(P_\a^{(j)}(z)\right)^q \varphi^{-j} \alpha^{jd}=
\big(P_\a^{(n+1)}(z) + \varphi^{-1}P_\a^{(n)}(z)\big)\varphi^{-n}.
\end{equation}

If $p \in \Omega''=\mathcal{V} \smallsetminus W_{\Psi_\a}^s(0_{\C^3})$, 
then as in the proof of Lemma \ref{lemutile}, we see that 
the sequence $(P_{\alpha}^{(n)}(p) \varphi^{-n})_{n \geq 0}$ converges; moreover,
we get from (\ref{groww}):
\[
\lim\limits_{n\to +\infty} g_n(p)=\lim\limits_{n\to +\infty}  \left(\frac{P_{\a}^{(n+1)}(p)}{P_{\a}^{(n)}(p)}+\varphi^{-1}\right)P_{\a}^{(n)}(p) \varphi^{-n}= \left(\varphi+\varphi^{-1}\right)\lim\limits_{n\to +\infty} P_{\a}^{(n)}(p) \varphi^{-n}.
\]
But we also know that $\lim\limits_{n\to +\infty}  |P_{\a}^{(n)}(p) \varphi^{-n}|>0$, 
so that $g(p)=\displaystyle\lim_{n\to +\infty} g_n(p) \neq 0$ and $p \not\in \mathcal{Z}$. 
This shows that if $p \in \mathcal{Z}$ then $p \in W_{\Psi_\a}^s(0_{\C^3})$; the other implication 
is always true. 

The last point follows from the fact that for $\Omega'\subset \mathcal{V}$, we have 
$\Omega'\cap W_{\Psi_\a}^s(0_{\C^3})=\Omega'\cap \mathcal{Z}$. Moreover, 
$\Omega'$ contains a neighborhood of $0_{\C^3}$ so the orbit of any point 
$p \in W_{\Psi_\a}^s(0_{\C^3})$ will eventually reach $\Omega'$. To conclude, we note that 
by invariance of the stable manifold, we have 
$W_{\Psi_\a}^s(0_{\C^3})=\cup_{n \geq 0} \Psi_\a^{-n}(\Omega'\cap W_{\Psi_\a}^s(0_{\C^3}))$.
\end{proof}

\subsection{Analysis of the dynamics in the case where $\varphi^{(1-q)/d}<\vert\alpha\vert<1$}

Thanks to previous results, we show the following intermediate result 
concerning the dynamics of $\Psi_\a$ in this case.

\begin{prop}\label{proptrichotomy}
Assume $\varphi^{(1-q)/d}<\vert\alpha\vert<1$. For $p\in \mathcal{U}=\{z_2=0\}^c$,
we show the following trichotomy:
\smallskip
\begin{itemize}
\item either $p$ belongs to the stable manifold $W_{\Psi_\a}^s(0_{\C^3})$; in this case, 
its forward iterates converge to $0_{\C^3}$ with exponential speed;
\smallskip
\item or there exist $\varepsilon>0$ and $n_0 \geq 0$ such that for $n \geq n_0$,
\[
|P_\a^{(n)}(p)| \leq ((1-\varepsilon)\varphi)^n;
\]
in this case $p \in \mathcal{Z}$. Furthermore, 
\[
\limsup\limits_{n\to +\infty} |P_\a^{(n)}(p)|^{q-1} |\a|^{nd} > 0;
\]
\item or the orbit of $p$ escapes to infinity very fast: $G_{\Psi_\a}^+(p)>0$.
Moreover the sequence $(|P_\a^{(n)}(p)|)_n$ is increasing after a certain time.
\end{itemize} 
\end{prop}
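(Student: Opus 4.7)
The plan is to split on the exponential growth rate of $(|P_\a^{(n)}(p)|)_n$. Since $0<|\a|<1$ makes $0_{\C^3}$ a hyperbolic saddle fixed point of $\Psi_\a$, the stable manifold $W_{\Psi_\a}^s(0_{\C^3})$ is exactly the set of points whose forward orbit converges exponentially to $0_{\C^3}$, which settles case 1. From now on assume $p\notin W_{\Psi_\a}^s(0_{\C^3})$. The dichotomy between cases 2 and 3 is whether the sub-Fibonacci bound $|P_\a^{(n)}(p)|\leq ((1-\varepsilon)\varphi)^n$ holds eventually for some $\varepsilon>0$, or fails for every such $\varepsilon$.

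For case 2, $p\in\mathcal{Z}$ is immediate from Lemma \ref{proplessfib}. To prove that $\limsup_n|P_\a^{(n)}(p)|^{q-1}|\a|^{nd}>0$, I argue by contradiction: if this $\limsup$ equals $0$, then the cocycle $A(\Psi_\a^n(p))$ converges to the constant matrix $M_0=\bigl(\begin{smallmatrix}1&1\\1&0\end{smallmatrix}\bigr)$, whose eigenvalues are $\varphi$ and $\varphi'$. A cone argument along the lines of the proof of Lemma \ref{lemutile} then forces $(P_\a^{(n)}(p),P_\a^{(n-1)}(p))$ either to decay to $0_{\C^2}$, contradicting $p\notin W_{\Psi_\a}^s(0_{\C^3})$, or to accumulate on the unstable eigendirection $\Delta_\varphi$ with norm equivalent to $\varphi^n$, contradicting the bound $|P_\a^{(n)}(p)|\leq ((1-\varepsilon)\varphi)^n$.

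In case 3, set $\eta:=\varphi^{q-1}|\a|^d>1$ and $u_n:=(P_\a^{(n)}(p))^{q-1}(\a^n p_2)^d$, so that the recursion reads $P_\a^{(n+1)}(p)=P_\a^{(n)}(p)(1+u_n)+P_\a^{(n-1)}(p)$. Choose $\varepsilon>0$ small enough that $(1-\varepsilon)^{q-1}\eta>1$: the failure of case 2 yields arbitrarily large $n_0$ with $|P_\a^{(n_0)}(p)|\geq ((1-\varepsilon)\varphi)^{n_0}$, hence $|u_{n_0}|\geq ((1-\varepsilon)^{q-1}\eta)^{n_0}|p_2|^d$; since $p_2\neq 0$, we may take $n_0$ so large that $|u_{n_0}|$ exceeds a threshold $K=K(\a,d,q)$ fixed once and for all so that $(K-2)^{q-1}|\a|^d\geq 1$ and $K\geq 2+\varphi$. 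A simultaneous induction on $n\geq n_0$ then establishes that $(|P_\a^{(n)}(p)|)_{n\geq n_0}$ is strictly increasing and $|u_n|\geq K$ at every step, using $|P_\a^{(n+1)}(p)|\geq |P_\a^{(n)}(p)|(|u_n|-2)$ and $|u_{n+1}|/|u_n|\geq (|u_n|-2)^{q-1}|\a|^d\geq 1$. Iterating gives $|P_\a^{(n)}(p)|\geq ((1-\varepsilon)\varphi)^n$ for every $n\geq n_0$, i.e., $|P_\a^{(n)}(p)|\geq \varphi^{Mn}$ with $M:=1+\log(1-\varepsilon)/\log\varphi$; provided $\varepsilon$ is chosen small enough, the inequality $M(q-1)+d\gamma>0$ required by Lemma \ref{lemmaspeedexplosion} remains valid by continuity, since this inequality is strict at $M=1$ precisely under the standing hypothesis $|\a|>\varphi^{(1-q)/d}$. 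Lemma \ref{lemmaspeedexplosion} then delivers $G_{\Psi_\a}^+(p)>0$.

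The main obstacle is the bootstrap in case 3: turning the merely subsequential lower bound $|P_\a^{(n_k)}(p)|\gtrsim\varphi^{n_k}$ at one instant $n_0=n_k$ into the self-sustaining feedback $|u_n|\to+\infty$ and into the monotonicity of $(|P_\a^{(n)}(p)|)$ for all subsequent $n$. Once that induction closes, everything else packages previous results: Lemma \ref{proplessfib} for $p\in\mathcal{Z}$ in case 2, the cone argument of Lemma \ref{lemutile} for the contradiction on $\limsup$, and Lemma \ref{lemmaspeedexplosion} for the explosion of the Green function in case 3.
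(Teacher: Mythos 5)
Your proof is structurally identical to the paper's: case~$1$ via Corollary~\ref{corvarstable}, case~$2$ via Lemma~\ref{proplessfib} plus the cone argument of Lemma~\ref{lemutile}, case~$3$ by bootstrapping a subsequential super--Fibonacci lower bound into a self-sustaining induction and then applying Lemma~\ref{lemmaspeedexplosion}. Your choice of $\varepsilon$ with $(1-\varepsilon)^{q-1}\eta>1$ is exactly the paper's $\mu:=|\alpha|^d((1-\varepsilon)\varphi)^{q-1}>1$, and the threshold $K$ plays the role of the paper's bound $\mu^{n_0}|p_2|^d\geq 2+\varphi$.

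There is one step you leave unjustified, and it is precisely the point the paper buries in a footnote: to start the induction in case~$3$ you use
$|P_\a^{(n_0+1)}(p)|\geq|P_\a^{(n_0)}(p)|(|u_{n_0}|-2)$, but this estimate requires $|P_\a^{(n_0-1)}(p)|\leq|P_\a^{(n_0)}(p)|$, which is not automatic from the mere subsequential lower bound $|P_\a^{(n_0)}(p)|\geq((1-\varepsilon)\varphi)^{n_0}$. The fix (as in the paper's footnote to Proposition~\ref{proptrichotomy}) is to replace $n_0$, when necessary, by the largest index $n_1<n_0$ with $|P_\a^{(n_1)}(p)|\geq|P_\a^{(n_1-1)}(p)|$. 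Such an $n_1$ exists for $n_0$ large since otherwise one would have $|p_1|>|p_0|>\dots>|P_\a^{(n_0)}(p)|>((1-\varepsilon)\varphi)^{n_0}$, which is impossible once $((1-\varepsilon)\varphi)^{n_0}>|p_1|$. Along the strictly decreasing stretch from $n_1$ to $n_0$ one retains $|P_\a^{(n_1)}(p)|>((1-\varepsilon)\varphi)^{n_0}$, and since $|\alpha|<1$ and $n_1<n_0$, one still has $|u_{n_1}|\geq((1-\varepsilon)\varphi)^{n_0(q-1)}|\alpha|^{n_1 d}|p_2|^d\geq\mu^{n_0}|p_2|^d\geq K$, so the base of your induction is restored at $n_1$. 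With this addition your argument matches the paper's.
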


\begin{proof}
Take $0 < \varepsilon < 1-\varphi^{-1}|\a|^{d/(1-q)}$. Note that 
$\mu:=|\a|^d ((1-\varepsilon)\varphi)^{q-1} > 1$. Let $p=(p_0,p_1,p_2) \not\in W_{\Psi_\a}^s(0_{\C^3})$; 
according to Corollary \ref{corvarstable} its forward orbit is unbounded. 
Suppose that there exists $n_0 \geq 0$ such that for every 
$n \geq n_0$, $|P_\a^{(n)}(p)| \leq ((1-\varepsilon)\varphi)^n$. 
From Lemma \ref{proplessfib}, we know that $p \in \mathcal{Z}$. 
Moreover, if $\limsup\limits_{n\to + \infty} |P_\a^{(n)}(p)|^{q-1} |\a|^{nd} =0$, then with our previous notations, 
$\displaystyle\lim_{n\to+\infty} A(\Psi_\a^n(p)) = \begin{pmatrix}
1 & 1\\
1 & 0
\end{pmatrix}$ and the growth is at least Fibonacci, which is excluded. 
We are then in the second case of Proposition \ref{proptrichotomy}.

Let us handle the remaining case. In particular, we can take $n_0 \geq 0$ 
as big as we want such that $|P_\a^{(n_0)}(p)| > ((1-\varepsilon)\varphi)^{n_0}$. 
Note that since by assumption $|\a|>\varphi^{(1-q)/d}$, and we have 
$(q-1)+d\gamma >0$ (with the notations of 
Lemma~\ref{lemmaspeedexplosion}). In particular, $M=1$ 
satisfies the hypotheses of this lemma. Take $n_0 \geq 0$ sufficiently large such that 
$|P_\a^{(n_0)}(p)|>((1-\varepsilon)\varphi)^{n_0}$ and $\mu^{n_0}|p_2|^d \geq 2+\varphi$.
We can always assume that $|P_\a^{(n_0)}(p)| \geq |P_\a^{(n_0-1)}(p)|$.\footnote{Else
there exists $n_1<n_0$ such that $|P_\a^{(n_1)}(p)| > ((1-\varepsilon)\varphi)^{n_0}$ 
and $|P_\a^{(n_1)}(p)| \geq |P_\a^{(n_1-1)}(p)|$ and we consider $n_1$ instead of $n_0$.}
Since $\left| (P_\a^{(n_0)}(p))^{q-1} (\a^{n_0})^d\right| \geq \mu^{n_0}$,
we deduce
\[
\left|\frac{P_\a^{(n_0+1)}(p)}{P_\a^{(n_0)}(p)}\right|=\left|(P_\a^{(n_0)}(p))^{q-1}(\a^{n_0})^d |p_2|^d+ 
1+\frac{P_\a^{(n_0-1)}(p)}{P_\a^{(n_0)}(p)}\right|\geq\mu^{n_0}|p_2|^d-2\geq\varphi.
\]
This shows that after time at most $n_0$, the sequence $(|P_\a^{(n)}(p)|)_n$ is increasing, 
moreover, there exists $C_0 > 0$ such that for any $n \geq 0$, 
\[
|P_\a^{(n)}(p)| \geq C_0 \varphi^{n}.
\] 
The assumptions of Lemma \ref{lemmaspeedexplosion} are satisfied, 
and we thus get the desired estimate on the speed.
\end{proof}

\begin{rem}
Denote by $\mathcal{S}$ the set of points corresponding to the second case
described in Proposition \ref{proptrichotomy}. A priori, points in $\mathcal{S}$ 
might exhibit a rather complicated dynamics: their forward orbit is not bounded, 
still, it could happen that it does not escape to infinity. We will see that in fact this 
behavior does not occur: $\mathcal{S}=\emptyset$. 
This is related to the properties of the Hénon map $\phi_\a$ to which 
$\Psi_\a$ is semi-conjugate: $\phi_\a$ possesses an attractor at infinity 
which attracts any point 
whose forward orbit is not bounded.
\end{rem}

Let us see how the previous result enables us to conclude the analysis of the 
dynamics of $\Psi_\a$ when $\varphi^{(1-q)/d}<\vert\alpha\vert<1$. Note that in this case, 
$0_{\C^2}$ becomes a saddle point for $\phi_\a$. 
We have seen in \S \ref{subsgeneralrem} that 
$W_{\phi_\a}^s(0_{\C^2})=K_{\phi_\a}^+$ is closed.
Therefore, we recover the fact recalled above in the particular case 
of the saddle fixed point $0_{\C^2}$, and which asserts that  $J_{\phi_\a}^+=\overline{W_{\phi_\a}^s(0_{\C^2})}=W_{\phi_\a}^s(0_{\C^2})$.
Let $p=(p_0,p_1,p_2) \in \C^3$. For $n \geq 0$, 
\begin{equation}\label{eqconjug}
\theta \circ \Psi_\a^n(p)= \big(P_\a^{(n)}(p) (\a^{n}p_2)^l,P_\a^{(n-1)}(p)(\a^{n}p_2)^l,\a^n p_2\big)=\big(\phi_\a^n \circ h (p),\alpha^n p_2\big).
\end{equation}
Recall that $ \mathcal{U}:=\{z_2=0\}^c$ and that  
$\mathcal{S} \subset \mathcal{U}$ denotes the set of points 
whose behavior is described in the second item of Proposition 
\ref{proptrichotomy}. From the estimate 
on the speed we obtained, we know that 
\[
\mathcal{S} \subset \big\{p \in \C^3\ \vert \ G_{\Psi_\a}^+(p)=0\big\}.
\] 
Since $G_{\Psi_\a}^+=G_{\phi_\a} \circ h$, we deduce that 
$h(\mathcal{S}) \subset\big\{\widetilde p\in \C^2\ \vert\ G_{\phi_\a}^+(\widetilde p)=0\big\}=W_{\phi_\a}^s(0_{\C^2})$. 
Assume that $\mathcal{S}$ is non-empty and take $p \in \mathcal{S}$. From (\ref{eqconjug}), and because 
by definition $\mathcal{S} \subset \mathcal{U}$, we see 
that $\displaystyle\lim_{n \to +\infty} |P_{\alpha}^{(n)}(p)| \cdot |\a|^{nl}$ exists and vanishes. 
Since $l=d/(q-1)$, this is in contradiction with the estimate 
$\displaystyle\limsup_{n \to +\infty} |P_{\alpha}^{(n)}(p)|^{q-1} \cdot |\a|^{nd}>0$ given in Proposition 
\ref{proptrichotomy}.
Let us rephrase what we have obtained: 

\begin{prop}
When $\varphi^{(1-q)/d}<\vert\alpha\vert<1$, the automorphism $\Psi_\a$ shares a certain number 
of properties 
with the Hénon automorphism $\phi_\a$. The point $0_{\C^3}$ is a fixed point of $\Psi_\a$ of 
saddle type, and $J_{\Psi_\a}^+:=\partial K_{\Psi_\a}^+=\overline{ W_{\Psi_\a}^s(0_{\C^3})}$. Moreover, 
it follows from the previous discussion that
\begin{equation}
\begin{array}{rcl}
& h & \\
\{z_2=0\} & \to & \{0_{\C^2}\};\\
K_{\Psi_\a}^+=W_{\Psi_\a}^s(0_{\C^3}) & \to & K_{\phi_\a}^+=W_{\phi_\a}^s(0_{\C^2});\\
\big\{p\in \C^3\ \vert\ G_{\Psi_\a}^+(p)>0\big\} & \to & \big\{\widetilde p\in \C^2\ \vert\ G_{\phi_\a}^+(\widetilde p)>0\big\}.
\end{array}
\end{equation}
In this situation, we see that the set $\Omega''$ introduced in the case 
where $0<|\a|<\varphi^{(1-q)/d}$ 
shrinks to the hyperplane $\{z_2=0\}$ which is contracted by $h$; 
in particular, it has empty interior. 
\end{prop}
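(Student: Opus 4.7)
The statement is essentially a compilation of what has been established in the preceding discussion, so my plan is to organize those pieces into a clean proof rather than introduce new machinery. The core task is to rule out the middle case in the trichotomy of Proposition \ref{proptrichotomy} (i.e., to show that the set $\mathcal{S}$ is empty) and then to read off the other assertions from Corollary \ref{corvarstable}, Proposition \ref{propgreen}, and the semi-conjugacy $\theta\circ \Psi_\a=\Phi_\a\circ\theta$.

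First I would recall the trichotomy: for $p\in\mathcal{U}=\{z_2\neq 0\}$, either $p\in W^s_{\Psi_\a}(0_{\C^3})$, or $p\in \mathcal{S}$ (in which case $|P_\a^{(n)}(p)|\leq ((1-\varepsilon)\varphi)^n$ for $n$ large and $\limsup_n |P_\a^{(n)}(p)|^{q-1}|\a|^{nd}>0$), or $G_{\Psi_\a}^+(p)>0$. The key step is to prove $\mathcal{S}=\emptyset$. Assume for contradiction that $p\in \mathcal{S}$. From the estimate $|P_\a^{(n)}(p)|\leq ((1-\varepsilon)\varphi)^n$ we immediately get $G_{\Psi_\a}^+(p)=0$. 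By Proposition \ref{propgreen} this forces $G_{\phi_\a}^+(h(p))=0$, hence $h(p)\in K_{\phi_\a}^+=W_{\phi_\a}^s(0_{\C^2})$ (here we use the assumption $|\a|>\varphi^{(1-q)/d}$, which makes $0_{\C^2}$ a saddle of $\phi_\a$ and places us in the regime where $K_{\phi_\a}^+$ coincides with the stable manifold, as recalled in \S\ref{subsgeneralrem}). Therefore $\phi_\a^n(h(p))\to 0_{\C^2}$, and the conjugation formula $\theta\circ \Psi_\a^n(p)=(\phi_\a^n\circ h(p),\a^n p_2)$ gives $|P_\a^{(n)}(p)|\cdot |\a|^{nl}\to 0$, i.e.\ $|P_\a^{(n)}(p)|^{q-1}|\a|^{nd}\to 0$ since $l=d/(q-1)$. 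This contradicts $\limsup_n |P_\a^{(n)}(p)|^{q-1}|\a|^{nd}>0$.

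With $\mathcal{S}=\emptyset$, any $p\in\mathcal{U}$ satisfies either $p\in W^s_{\Psi_\a}(0_{\C^3})$ or $G_{\Psi_\a}^+(p)>0$. Combined with the analysis of the hypersurface $\{z_2=0\}$, where every point is either in $W^s_{\Psi_\a}(0_{\C^3})$ or escapes with Fibonacci speed, this yields Theorem D (stated in the introduction) and in particular the identity $K_{\Psi_\a}^+=W_{\Psi_\a}^s(0_{\C^3})$ from Corollary \ref{corvarstable}. The fact that $0_{\C^3}$ is a saddle fixed point is the content of \S\ref{Section:escapingspeed}, and then $J_{\Psi_\a}^+:=\partial K_{\Psi_\a}^+=\overline{W^s_{\Psi_\a}(0_{\C^3})}$ is just Corollary \ref{corvarstable} again together with the definition of $J_{\Psi_\a}^+$.

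The mapping diagram follows by inspection: $h(\{z_2=0\})=\{0_{\C^2}\}$ is immediate from the formula $h(z_0,z_1,z_2)=(z_0z_2^l,z_1z_2^l)$; the inclusion $h(K_{\Psi_\a}^+)\subset K_{\phi_\a}^+$ (with equality into $W_{\phi_\a}^s(0_{\C^2})$) comes from the conjugacy $\theta\circ\Psi_\a^n=(\phi_\a^n\circ h,\a^n z_2)$ applied to points with bounded forward orbit; and the last line follows from $G_{\Psi_\a}^+=G_{\phi_\a}^+\circ h$. Finally, the emptiness of an analogue of $\Omega''$ is precisely the statement that, outside $\{z_2=0\}$, the set $\{G_{\Psi_\a}^+=0\}$ coincides with $W^s_{\Psi_\a}(0_{\C^3})$, which is what we just proved by excluding $\mathcal{S}$; since $W^s_{\Psi_\a}(0_{\C^3})$ is biholomorphic to $\C^2$ it has empty interior in $\C^3$, and the only remaining contribution to $\{G_{\Psi_\a}^+=0\}$ lies inside the hyperplane $\{z_2=0\}$. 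I do not expect a genuine obstacle here; the only subtle point is the contradiction argument showing $\mathcal{S}=\emptyset$, and it hinges crucially on the equality $K_{\phi_\a}^+=W^s_{\phi_\a}(0_{\C^2})$ in the saddle regime, which is exactly where the hypothesis $|\a|>\varphi^{(1-q)/d}$ is used.
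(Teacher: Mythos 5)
Your proposal is correct and follows essentially the same route as the paper: the central contradiction argument showing $\mathcal{S}=\emptyset$ — passing from $G_{\Psi_\a}^+(p)=0$ to $h(p)\in K_{\phi_\a}^+=W_{\phi_\a}^s(0_{\C^2})$, then using the semi-conjugacy formula and $l=d/(q-1)$ to contradict $\limsup_n |P_\a^{(n)}(p)|^{q-1}|\a|^{nd}>0$ — is exactly the paper's argument, and the remaining assertions are read off from the same prior results (Corollary~\ref{corvarstable}, Proposition~\ref{propgreen}, \S\ref{subsgeneralrem}).
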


\subsection{A few words on the case where $|\alpha|=1$}

Note that in this case, the point 
$0_{\C^3}$ is still fixed by $\Psi_\a$ but it is no longer hyperbolic. 
We show the following trichotomy:

\begin{prop}\label{propdyn1}
Let $p\in \mathcal{U}=\{z_2=0\}^c$. 
We have three possibilities:
\begin{itemize}
\smallskip
\item either $p \in K_{\Psi_\a}^+$, that is, its forward orbit is bounded;
\smallskip
\item or $p \in \mathcal{Z}\smallsetminus K_{\Psi_\a}^+$; 
in particular, $|P_\a^{(n)}(p)|=o(\varphi^{n/q})$;
\smallskip
\item or $G_{\Psi_\a}^+(p)>0$; moreover the sequence $(|P_\a^{(n)}(p)|)_n$ is increasing after a certain time.
\end{itemize}
\end{prop}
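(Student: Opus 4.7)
The plan is to mirror the proof of Proposition \ref{proptrichotomy}, exploiting two simplifications specific to $|\alpha|=1$ (so $\gamma=\frac{\ln|\alpha|}{\ln\varphi}=0$): the factor $|\alpha^n p_2|^d$ in the nonlinear part of the recurrence reduces to the constant $|p_2|^d$, and the condition $M(q-1)+d\gamma>0$ of Lemma \ref{lemmaspeedexplosion} is satisfied for every $M>0$.

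Fix $p=(p_0,p_1,p_2)\in\mathcal{U}$. If its forward orbit is bounded, it lies in $K_{\Psi_\a}^+$ and we are in the first case, so I assume otherwise. I then split according to whether or not $|P_\a^{(n)}(p)|=o(\varphi^{n/q})$. In the \emph{slow} branch, the bound $o(\varphi^{n/q})$ is stronger than $O(((1-\varepsilon)\varphi)^n)$ for any sufficiently small $\varepsilon>0$ (since $1/q<1$), so Lemma \ref{proplessfib} immediately yields $p\in\mathcal{Z}$, placing us in the second alternative.

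For the \emph{fast} branch, there must exist $\delta>0$ and indices $n_k\to+\infty$ with $|P_\a^{(n_k)}(p)|\geq\delta\varphi^{n_k/q}$. Since $|\alpha^{n_k}p_2|^d=|p_2|^d$ is constant, the quantity $|P_\a^{(n_k)}(p)|^{q-1}|p_2|^d$ is at least $\delta^{q-1}|p_2|^d\varphi^{n_k(q-1)/q}$, hence tends to infinity. Picking $n_k$ so large that this factor exceeds $3$, the recurrence
\[
|P_\a^{(n_k+1)}(p)|\geq |P_\a^{(n_k)}(p)|\big(|P_\a^{(n_k)}(p)|^{q-1}|p_2|^d-2\big)
\]
yields $|P_\a^{(n_k+1)}(p)|\geq\tfrac12\delta^q|p_2|^d\varphi^{n_k}$, and a short induction (each subsequent ratio $|P_\a^{(n+1)}(p)|/|P_\a^{(n)}(p)|$ only grows larger once inside this regime) shows that from time $n_k$ onwards the sequence is strictly increasing and $|P_\a^{(n)}(p)|\geq\varphi^n$ for all large $n$. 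Applying Lemma \ref{lemmaspeedexplosion} to the shifted point $\Psi_\a^{n_k}(p)$ with $M=1$ (so that $M(q-1)+d\gamma=q-1>0$), and invoking $G_{\Psi_\a}^+\circ\Psi_\a=q\cdot G_{\Psi_\a}^+$, I conclude $G_{\Psi_\a}^+(p)>0$, which is the third alternative.

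The hard part will be the bootstrap above: a single excursion $|P_\a^{(n_k)}(p)|\geq\delta\varphi^{n_k/q}$ must suffice to lock the orbit into explosive growth. This works precisely because $|\alpha|=1$ prevents any damping of the nonlinear term in later iterations; in contrast, the argument in Proposition \ref{proptrichotomy} required the far stronger initial excursion $|P_\a^{(n_0)}(p)|>((1-\varepsilon)\varphi)^{n_0}$, which is why the critical threshold here can be lowered all the way down to $\varphi^{n/q}$, the exact rate at which the nonlinear contribution $|P|^q|p_2|^d$ begins to compete with the linear Fibonacci growth.
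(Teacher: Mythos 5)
Your proposal follows the same basic strategy as the paper — a dichotomy between slow and fast growth, with Lemma \ref{proplessfib} handling the slow branch and Lemma \ref{lemmaspeedexplosion} the fast one. The paper splits on $p\in\mathcal{D}$ versus $p\notin\mathcal{D}$ (and observes that $p\in\mathcal{D}$ forces $|P_\a^{(n)}(p)|=o(\varphi^{n/q})$, hence $p\in\mathcal{Z}$ via Lemma \ref{proplessfib}); it then invokes the contrapositive of Lemma \ref{proplessfib} to obtain excursions above $((1-\epsilon)\varphi)^n$, and refers back to the proof of Proposition \ref{proptrichotomy} for the bootstrap. You split instead on whether $|P_\a^{(n)}(p)|=o(\varphi^{n/q})$ or not; since $\gamma=0$ makes these two dichotomies coincide (as your slow branch re-derives $p\in\mathcal{D}$), this is a cosmetic difference. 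Where you genuinely diverge is in the fast branch: you start from the weaker excursion $|P_\a^{(n_k)}(p)|\geq\delta\varphi^{n_k/q}$ and argue from scratch that this locks in super-exponential growth. That is a valid observation, and it is indeed the fact $\gamma=0$ that allows the threshold to be lowered to $\varphi^{n/q}$ — but the price is a more delicate bootstrap than the paper needs, and your sketch of it is a little loose.

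Two points deserve more care. First, the inequality
\[
|P_\a^{(n_k+1)}(p)|\geq |P_\a^{(n_k)}(p)|\bigl(|P_\a^{(n_k)}(p)|^{q-1}|p_2|^d-2\bigr)
\]
implicitly assumes $|P_\a^{(n_k-1)}(p)|\leq|P_\a^{(n_k)}(p)|$, which is not automatic at the excursion time. The paper handles this in the proof of Proposition \ref{proptrichotomy} via the footnote device: if the sequence decreases at $n_k$, push the index back until it does not, noting that the excursion bound persists (indeed improves) while going backward, and that one cannot reach $n=0$ when $n_k$ is large because $|p_0|$ is fixed. Your argument should invoke or reproduce this. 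Second, the assertion that ``$|P_\a^{(n)}(p)|\geq\varphi^n$ for all large $n$'' does not fall out of ``a short induction'' in the naive sense: after the first step you have $|P_\a^{(n_k+1)}(p)|\gtrsim\delta^q|p_2|^d\varphi^{n_k}$, which in general is \emph{not} above $\varphi^{n_k+1}$. What saves you is that, once the nonlinear term dominates and the sequence is monotone, $x_n:=\log|P_\a^{(n)}(p)|$ satisfies $x_{n+1}\geq q x_n + d\log|p_2|-\log 3$, hence grows like $q^{n-n_k}x_{n_k}$ (take $n_k$ large enough that $x_{n_k}+ (d\log|p_2|-\log 3)/(q-1)>0$), which eventually dominates $n\log\varphi$. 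Spelled out this way the argument is correct and the hypotheses of Lemma \ref{lemmaspeedexplosion} are met with $M=1$; as written, though, the claim is too quick to be self-contained.
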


\begin{proof}
Assume that $p \not\in K_{\Psi_\a}^+$ and $p \in \mathcal{D}$. This implies 
$|P_\a^{(n)}(p)|=o(\varphi^{n/q})$. From Lemma \ref{proplessfib}, and since $q>1$, 
we deduce that $p \in \mathcal{Z}$ and we are 
in the second case. 
 
Let us then assume that $p \not\in \mathcal{D}$ and fix $\epsilon>0$ such that
 $(1-\epsilon)\varphi>1$. 
From Lemma \ref{proplessfib}, we see that for every $n_0 \geq 0$, it is possible to 
find $n \geq n_0$ such that $|P_\a^{(n)}(p)| \geq ((1-\epsilon) \varphi)^n$.
Arguing as in the proof of Proposition \ref{proptrichotomy}, 
we see that the assumptions of 
Lemma \ref{lemmaspeedexplosion} are satisfied after a certain time, and we conclude
that we are in the third case described above.
\end{proof}

\begin{rem}
Note that for any $p \in \C^3$, either its forward orbit escapes to infinity with maximal 
speed (this corresponds to the third case), or $p \in \mathcal{Z}$. We denote by 
$\mathcal{S}'$ the set of points corresponding to the second case described in 
Proposition \ref{propdyn1}. We will show later that in fact $\mathcal{S}'=\emptyset$. 
\end{rem}

When $\alpha^n=1$ for some $n \geq 1$, we see that the dynamics of $\Psi_\a$ is essentially 
given by the one of the Hénon automorphism $\phi=\phi_1$, so we 
assume in the following that $\alpha$ is not a root of unity.

Reasoning as before, (\ref{eqconjug}) tells us that 
$h(K_{\Psi_\a}^+)=K_{\phi_\a}^+=\big\{\widetilde p \in \C^2\ \vert\ G_{\phi_\a}^+(\widetilde p)=0\big\}$, 
but now, $h(W_{\Psi_\a}^s(0_{\C^3}))\subsetneq W_{\phi_\a}^s(0_{\C^2})$. Again, 
$K_{\Psi_\a}^+\neq \big\{p \in \C^3\ \vert\ G_{\Psi_\a}^+(p)=0\big\}$ since there are points in 
$\{z_2=0\}$ escaping to infinity with Fibonacci speed. The point $0_{\C^2}$ is still a saddle 
point of $\phi_\a$, hence $J_{\phi_\a}^+:=\partial K_{\phi_\a}^+=\overline{W_{\phi_\a}^s(0_{\C^2})}$. 
The map $\Psi_\a^{-1}$ is of the same form as $\Psi_\a$, and similarly, we have 
$h(K_{\Psi_\a}^-)=K_{\phi_\a}^-=\big\{\widetilde p \in \C^2\ \vert\ G_{\phi_\a}^-(\widetilde p)=0\big\}$ 
as well as $J_{\phi_\a}^-:=\partial K_{\phi_\a}^-=\overline{W_{\phi_\a}^u(0_{\C^2})}$. 

We define $K_{\Psi_\a}:=K_{\Psi_\a}^+ \cap K_{\Psi_\a}^-$; note that 
$K_{\Psi_\a}\cap \{z_2=0\}=(\Delta_{\varphi}\times \{0\}) \cap (\Delta_{\varphi'}\times \{0\})=\{0_{\C^3}\}$, 
and that $h(K_{\Psi_\a})=K_{\phi_\a}$. We also see that $\restriction{\theta}{\mathcal{U}}$
 maps bijectively $K_{\Psi_\a} \cap \mathcal{U}$ onto $K_{\Phi_\a} \cap \mathcal{U}=K_{\phi_\a} \times \C^*$. 
In particular, $K_{\Psi_{\a}}=\{0_{\C^3}\} \cup \theta^{-1}(K_{\phi_\a} \times \C^*)$. Since 
$\restriction{\theta}{\mathcal{U}}$ is a biholomorphism, we deduce that:
\[
J_{\Psi_\a}:=\partial K_{\Psi_\a}=\partial(\{0_{\C^3}\} \cup\theta^{-1}(K_{\phi_\a} \times \C^*))=\{0_{\C^3}\} \cup
\overline{\theta^{-1} (J_{\phi_\a} \times \C^*)}.
\]

Now, Proposition \ref{propdyn1} implies that for any $p \in \mathcal{U}$, 
either 
$|P_\a^{(n)}(p)|=O(\varphi^{j/q})$ 
(this corresponds to the first and the second cases described in 
this proposition), or $G_{\Psi_\a}^+(p)>0$. With the notations of 
Proposition \ref{propdyn1}, assume that $\mathcal{S}'\neq \emptyset$ and 
let $p \in \mathcal{S}'\subset (K_{\Psi_\a}^+)^c$. 
In particular, $G_{\Psi_\a}^+(p)=0$. 
But $G_{\Psi_\a}^+(p)=G_{\phi_\a}^+(h(p))$, so 
$h(p) \in \big\{\widetilde{p} \in \C^2\ \vert\ G_{\phi_\a}^+(\widetilde{p})=0\big\}=K_{\phi_\a}^+$. 
Then Equation (\ref{eqconjug}) implies that $p \in K_{\Psi_\a}^+$, a contradiction: 
we conclude that $\mathcal{S}'= \emptyset$.

We define a Green function $G_{\Psi_\a}^-$ 
in the same way as we did before, as well as a 
current $T_{\Psi_\a}^-:=\mathrm{dd^c}(G_{\Psi_\a}^-)$.  
We note that 
$\restriction{T_{\Psi_\a}^\pm}{\mathcal{U}}=(\restriction{\theta}{\mathcal{U}})^*(\restriction{T_{\Phi_\a}^\pm}{\mathcal{U}})=(\restriction{h}{\mathcal{U}})^*(\restriction{T_{\phi_\a}^\pm}{\mathcal{U}})$, 
and by construction, the currents $T_{\Psi_\a}^\pm$ satisfy $\Psi_\a^* (T_{\Psi_\a}^\pm)=q^{\pm 1}\cdot T_{\Psi_\a}^\pm$. 
The measure 
\[
\mu_{\Psi_\a}:=T_{\Psi_\a}^+ \wedge 
T_{\Psi_\a}^- \wedge dz_2 \wedge d\overline{z_2}
\]
is invariant by $\Psi_\a$. Moreover, if we denote $\mu_{\Phi_\a}:=\mu_{\phi_\a} \wedge dz_2 \wedge d\overline{z_2}$, then
\begin{equation}\label{equmesur}
\restriction{\mu_{\Psi_\a}}{\mathcal{U}}=(\restriction{h}{\mathcal{U}})^*(\restriction{\mu_{\phi_\a}}{\mathcal{U}}) \wedge dz_2 \wedge d\overline{z_2}=(\restriction{\theta}{\mathcal{U}})^*(\restriction{\mu_{\Phi_\a}}{\mathcal{U}}).
\end{equation}
Since $\mu_{\phi_\a}$ has support in the compact set $J_{\phi_\a}:=\partial K_{\phi_\a}$, 
we deduce from \eqref{equmesur} that $\mu_{\Psi_\a}$ is supported on 
$J_{\Psi_\a}=\{0_{\C^3}\} \cup\overline{\theta^{-1} (J_{\phi_\a} \times \C^*)}$.

For every $p_2 \neq 0$, the set 
$\mathcal{C}_{p_2}:=\C^2 \times\big\{p_2 e^{ix}\ \vert \ x \in \mathbb{R}\big\}$
 is invariant both by $\Psi_\a$ and $\Phi_\a$. 
We know that $(\phi_\a,\mu_{\phi_\a})$ is mixing (in particular, weakly mixing), 
and for any $p_2 \neq 0$, the restriction of $z_2 \mapsto \alpha z_2$ to $\mathcal{C}_{p_2}$ 
is ergodic for $dz_2 \wedge d\overline{z_2}$,
hence $(\restriction{\Phi_\a}{\mathcal{C}_{p_2}},\mu_{\Phi_\a})$ is ergodic 
(see \cite{SinaiBunimovich} for instance). 
We define $\mathcal{J}_{p_2}:=J_{\Psi_\a} \cap \mathcal{C}_{p_2}$; this set is invariant, 
and we know that $\restriction{\mu_{\Psi_\a}}{\mathcal{J}_{p_2}}$ is supported on it. 
By \eqref{equmesur},
we conclude that $(\restriction{\Psi_\a}{\mathcal{J}_{p_2}},\mu_{\Psi_\a})$ 
is ergodic too.  
Yet there is no hope to get mixing properties for $\Psi_\a$
since by projection on the third coordinate, $z_2 \mapsto \alpha z_2$ is a quasiperiodic factor 
of the dynamics. We have thus obtained:

\begin{prop}
For any point $p \in \C^3$, we are in exactly one of the following cases:
\begin{itemize}
\smallskip
\item either the orbit of $p$ is bounded, i.e.\ $p \in K_{\Psi_\a}$;
\smallskip
\item or $p \in \{z_2=0\} \smallsetminus \{0_{\C^3}\}$;
\smallskip
\item or $G_{\Psi_\a}^+(p) >0$ or $G_{\Psi_\a}^-(p)>0$. 
\end{itemize}
\smallskip
The measure $\mu_{\Psi_\a}$ is invariant by $\Psi_\a$ and supported on the Julia set $J_{\Psi_\a}=\{0_{\C^3}\} \cup
\overline{\theta^{-1} (J_{\phi_\a} \times \C^*)}$.
Moreover, when $p_2 \neq 0$ and $\a$ is not a root of unity, 
$(\restriction{\Psi_\a}{\mathcal{J}_{p_2}}, \mu_{\Psi_\a})$ is ergodic.
\end{prop}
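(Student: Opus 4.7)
The plan is to assemble the trichotomy from the forward analysis already performed in Proposition \ref{propdyn1} together with its backward analogue, and then to read off the statements about $\mu_{\Psi_\a}$ from the conjugacy relation $\theta \circ \Psi_\a = \Phi_\a \circ \theta$ and the corresponding facts for $\phi_\a$.

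First I would fix $p=(p_0,p_1,p_2)\in \C^3$ and split according to whether $p_2 = 0$ or not. If $p_2 = 0$, then the forward dynamics reduces to the hyperbolic linear map analyzed in Section 4; hence either $p = 0_{\C^3} \in K_{\Psi_\a}$, or $p \in \{z_2=0\}\smallsetminus\{0_{\C^3}\}$, and we are done. If $p_2 \neq 0$, I would apply Proposition \ref{propdyn1} to the forward iterates and to the backward iterates (using that $\Psi_\a^{-1}$ has the same form as $\Psi_\a$, so the same trichotomy applies). For the forward direction there are three possibilities: $p\in K_{\Psi_\a}^+$, $p\in \mathcal{S}'$, or $G_{\Psi_\a}^+(p)>0$; but the argument already given just before the proposition shows $\mathcal{S}'=\emptyset$ (if $p\in\mathcal{S}'$ then $G_{\Psi_\a}^+(p)=0$, so $G_{\phi_\a}^+(h(p))=0$, i.e.\ $h(p)\in K_{\phi_\a}^+$, and then \eqref{eqconjug} together with $p_2 \neq 0$ forces $p\in K_{\Psi_\a}^+$, a contradiction). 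The backward analogue yields $p\in K_{\Psi_\a}^-$ or $G_{\Psi_\a}^-(p)>0$. Combining the two trichotomies, if $p$ lies in both $K_{\Psi_\a}^+$ and $K_{\Psi_\a}^-$ then $p\in K_{\Psi_\a}$, and otherwise at least one of $G_{\Psi_\a}^{\pm}(p)$ is positive, which is exactly the third case.

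For the measure, I would use the pullback identity $\Psi_\a^*(T_{\Psi_\a}^\pm)=q^{\pm 1}\cdot T_{\Psi_\a}^\pm$ (noted just above the statement) together with the fact that $dz_2\wedge d\overline{z_2}$ is invariant under $z_2\mapsto \a z_2$ since $|\a|=1$; wedging these gives $\Psi_\a^*\mu_{\Psi_\a}=\mu_{\Psi_\a}$. The support statement follows from \eqref{equmesur}: $\mu_{\phi_\a}$ is supported on $J_{\phi_\a}$, so its pullback under $\restriction{h}{\mathcal{U}}$ wedged with $dz_2\wedge d\overline{z_2}$ is supported on $\theta^{-1}(J_{\phi_\a}\times \C^*)$; taking closures in $\C^3$ adds the indeterminacy limit $\{0_{\C^3}\}$, matching the description of $J_{\Psi_\a}$.

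The ergodicity assertion is the most delicate point of the proof, but the key inputs are already in hand. For fixed $p_2\neq 0$, the set $\mathcal{C}_{p_2}$ is $\Psi_\a$-invariant and $\theta$ identifies $(\restriction{\Psi_\a}{\mathcal{C}_{p_2}},\mu_{\Psi_\a})$ with $(\restriction{\Phi_\a}{\theta(\mathcal{C}_{p_2})},\mu_{\Phi_\a})$. The latter is the skew product of the Hénon automorphism $\phi_\a$ (which is mixing, hence weakly mixing, with respect to $\mu_{\phi_\a}$) with the irrational rotation $z_2\mapsto \a z_2$ on the circle $\{p_2 e^{ix}\}$ (which is ergodic with respect to $dz_2\wedge d\overline{z_2}$ precisely because $\a$ is not a root of unity). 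The classical result of Sinai--Bunimovich on products of a weakly mixing system with an ergodic one then yields the ergodicity of $(\restriction{\Phi_\a}{\mathcal{C}_{p_2}},\mu_{\Phi_\a})$, and transporting back through $\theta$ and restricting to the support gives the ergodicity of $(\restriction{\Psi_\a}{\mathcal{J}_{p_2}},\mu_{\Psi_\a})$. The main subtlety I expect is purely book-keeping: one must check that $\theta$ really conjugates the relevant measure-theoretic data, which is why the argument is confined to $\mathcal{U}$ where $\theta$ is a biholomorphism.
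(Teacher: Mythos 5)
Your proposal is correct and follows essentially the same route the paper takes: the trichotomy is assembled from Proposition \ref{propdyn1} applied forward and backward (together with the observation that $\mathcal{S}'=\emptyset$ via \eqref{eqconjug} and $G^+_{\Psi_\a}=G^+_{\phi_\a}\circ h$), the invariance and support of $\mu_{\Psi_\a}$ are read off from $\Psi_\a^*(T^\pm_{\Psi_\a})=q^{\pm 1}T^\pm_{\Psi_\a}$, $|\a|=1$, and \eqref{equmesur}, and ergodicity is obtained by pushing through $\theta$ on $\mathcal{U}$ and invoking weak mixing of $(\phi_\a,\mu_{\phi_\a})$ against ergodicity of the irrational rotation, as in the paper.
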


\vspace{8mm}

\bibliographystyle{plain}
\bibliography{biblio}
\nocite{}

\end{document}